  \newtheorem{theorem}{Theorem}[section]
  \newtheorem{lemma}[theorem]{Lemma}
  \theoremstyle{definition}
  \newtheorem{definition}[theorem]{Definition}
  \newtheorem*{remark}{Remark}
  \newtheorem{example}[theorem]{Example}
  \numberwithin{equation}{section}
  \newcommand{\R}{{\mathbb R}}
  \newcommand{\E}{{\mathsf E}}
  \newcommand{\supp}{{\operatorname{supp}}}
  \newcommand{\tr}{\operatorname{tr}}
\DeclareMathOperator{\Int}{int}
\DeclareMathOperator{\Mat}{Mat}
  \newcommand{\cD}{{\mathcal{D}}}
  \newcommand{\cR}{{\mathcal{R}}}
  \newcommand{\cT}{{\mathcal{T}}}
  \newcommand{\fS}{{\mathfrak{S}}}
  \newcommand{\e}{{\varepsilon}}
  \newcommand{\z}{\zeta}
  \newcommand{\g}{\gamma}
  \renewcommand{\l}{\lambda}
  \renewcommand{\d}{\delta}
  \renewcommand{\t}{{\theta}}
  \newcommand{\dd}{\mathrm{d}}
  \renewcommand{\Re}{\operatorname{Re}}
  \renewcommand{\Im}{\operatorname{Im}}
  \newcommand{\dist}{\operatorname{dist}}
  \newcommand{\SL}{\text{SL}}
  \newcommand{\ess}{\text{\rm{ess}}}
  \newcommand{\bbN}{\mathbb{N}}
  \newcommand{\bbZ}{\mathbb{Z}}
  \newcommand{\bbR}{\mathbb{R}}
  \newcommand{\bbC}{\mathbb{C}}
  \newcommand{\AC}{\mathrm{AC}}
  \newcommand{\loc}{\mathrm{loc}}
  \title[Asymptotics for Christoffel functions]{Asymptotics for Christoffel functions associated to continuum Schr\"odinger operators}
\author[B.\ Eichinger]{Benjamin Eichinger}
\address{Institute for Analysis and Scientific Computing, Vienna University of Technolog, Wien, A-1040, Austria}
\email{benjamin.eichinger@tuwien.ac.at}
\thanks{This work was supported by the Austrian Science Fund FWF, project no: P33885}
\begin{document}
	\maketitle
	
	\begin{abstract}
	We prove asymptotics of the Christoffel function, $\l_L(\xi)$, of a continuum Schr\"odinger operator for points in the interior of the essential spectrum under some mild conditions on the spectral measure. It is shown that $L\l_L(\xi)$ has a limit and that this limit is given by the Radon--Nikodym derivative of the spectral measure with respect to the Martin measure. Combining this with a recently developed local criterion for universality limits at scale $\l_L(\xi)$, we compute universality limits for continuum Schr\"odinger operators at scale $L$ and obtain clock spacing of the eigenvalues of the finite range truncations.
	\end{abstract}

\section{Introduction}

The goal of this paper is to derive asymptotics for Christoffel functions of continuum Schr\"odinger operators. It is natural for this topic to work in the half-line setting, so our Schr\"odinger operators are unbounded self-adjoint operators, $H_V$, on $L^2((0,\infty))$, corresponding formally to the differential expression
\begin{align*}
- \frac{d^2}{dx^2}+V.
\end{align*}
We always assume that the potential $V$ is real-valued and uniformly locally integrable, i.e.
\begin{equation}\label{L1locunif}
\sup_{x \ge 0} \int_x^{x+1} \lvert V(t) \rvert\dd t < \infty.
\end{equation}
In particular, $0$ is a regular endpoint and $+\infty$ is a limit point endpoint in the sense of Weyl. We set a Neumann boundary condition at $0$,  so the domain of $H_V$ is 
\[
D(H_{V}) = \big\{ f \in L^2((0,\infty)) \mid f,f' \in \AC_\loc([0,\infty)), -f'' + V f \in L^2((0,\infty)),\, f'(0) = 0\big\}
\]
where $\AC_\loc([0,\infty))$ denotes the set of functions which are absolutely continuous on all bounded intervals.

For any $z\in\bbC$ the  Neumann solution, $v(x,z)$, is the solution of the initial value problem 
\begin{align}\label{defnv}
-\partial_x^2v(x,z)+V(x)v(x,z)=z v(x,z),\quad v(0,z)=1,\ \partial_xv(0,z)=0.
\end{align}
The Christoffel function is defined by 
\begin{align}\label{def:Christoffel}
\l_L(z)=\bigg(\int_0^L|v(x,z)|^2dx\bigg)^{-1},\quad z\in\bbC,L\geq 0.
\end{align}
As a function of $L$, it measures the growth rate of eigensolutions, which is known to be an important quantity in spectral theory. For instance growth rates of eigensolutions are used in subordinacy theory developed by Gilbert and Pearson \cite{GilbertPearson}, or by Last and Simon \cite{LastSimonInvent99}, to describe the absolutely continuous spectrum of $H_V$. In our main result, Theorem \ref{thm1} below, we will prove asymptotics for $\l_L(\xi)$  as $L\to\infty$ and as a consequence obtain universality limits for Christoffel--Darboux kernels of continuum Schr\"odinger operators and clock spacing of the eigenvalues of finite range truncations of $H_V$.  Asymptotics of $\l_L$ as well as universality limits and zero spacing of eigenvalues has received much attention in the recent years, see \cite{AvilaLastSimon,BessonovMNT,BreuerWeissman,ELS,GubkinMNT,LevinLubinsky08,LubJFA,LubinskyAnnals,MNT,MaltsevCMP,Mitkovski} for a partial list of references.

 In order to formulate Theorem \ref{thm1} we need to recall the construction of a maximal spectral measure using Weyl theory and the concept of the Martin function from potential theory. 

Since $\infty$ is a limit point endpoint, there is (up to a scalar multiple unique) $\psi(x,z)$ satisfying
\begin{align}\label{eq:WeylSol}
-\partial_x^2\psi(x,z)+V(x)\psi(x,z)=z\psi(x,z)
\end{align}
and $\psi\in L^2((0,\infty)),$ which is called the Weyl solution at $\infty$. On the upper half-plane $\bbC_+$, the Weyl $m$-function is defined by 
\begin{align}\label{eq:mfunction}
m(z)=-\frac{\psi(0,z)}{\partial_x\psi(0,z)}.
\end{align} 
The function $m$ is a Herglotz function, i.e., it maps $\bbC_+$ analytically into itself. It is a general fact, that Herglotz functions admit an integral representation. That is, there exist $a\geq 0,b\in\bbR$ and a positive Borel measure $\mu$, with
\[
\int\frac{d\mu(\xi)}{1+\xi^2}<\infty
\]
such that 
\[
m(z)=az+b+\int_\bbR\left(\frac{1}{\xi-z}-\frac{\xi}{1+\xi^2}\right)d\mu(\xi).
\]
From the perspective of operator theory, the measure $\mu$ represents a maximal spectral measure of $H_V$. Let $d\mu=f_\mu dx+d\mu_s$ be the Lebesgue decomposition of $\mu$ with respect to the Lebesgue measure. 

In \cite{EichLuk} a theory of Stahl--Totik regularity was developed for continuum Schr\"odinger operators. To introduce this theory we will use some standard objects from potential theory which can be found in \cite{RansfordPotTheorie, Classpotential}. Let $\E=\sigma_{\ess}(H_V)$ and $\Omega=\bbC\setminus \E$. For potentials $V$ satisfying \eqref{L1locunif}, $\E$ is bounded from below but not from above. Therefore, one can show that the cone of positive harmonic functions in $\Omega$ which vanish quasi-everywhere (q.e.) on $\E$ is one-dimensional. Elements of this cone are called Martin functions of $\Omega$ at $\infty$. For an excellent survey on the Martin theory for Denjoy domains we refer to \cite{GardSj09}. In \cite{EichLuk} it is shown that for any such Martin function 
\[
\lim\limits_{z\to-\infty}\frac{M(z)}{\sqrt{-z}}>0.
\]
Existence of the limit follows by standard arguments for positive harmonic functions. The important point of the above statement is that the limit is positive. It allows to normalize at $\infty$ and we obtain a unique Martin function, $M_\E(z)$, such that the limit above is equal to $1$.  Due to \cite[Theorem 1.1]{EichLuk} there exists $a_\E \in \bbR$ such that the Martin function has the asymptotic behavior
\begin{align*}
M_\E(z)=\Re\left(\sqrt{-z}+\frac{a_\E}{2\sqrt{-z}}\right)+o\left(\frac{1}{\sqrt{|z|}}\right),
\end{align*}
as $z \to \infty$, $\arg z \in [\delta,2\pi - \delta]$, for any $\delta > 0$. This higher asymptotic expansion is then used to characterize regularity in the sense of Stahl and Totik. A potential $V$ satisfying \eqref{L1locunif} is called Stahl--Totik regular if 
\begin{align}\label{eq:StahlTotik1}
\lim\limits_{L\to\infty}\frac{1}{L}\int_0^LV(s)ds=a_\E.
\end{align}
The Martin function can be extended to a subharmonic function on $\bbC$ and thus its distributional Laplacian defines a positive measure. We call 
$$
\rho_\E=\frac{1}{2\pi}\Delta M_\E
$$ 
the Martin measure of the domain $\bbC\setminus\E$. It plays the role of the equilibrium measure from the theory of orthogonal polynomials for compactly supported measures.  Again we write its Lebesgue decomposition $d\rho_\E(\xi)=f_\E(\xi) d\xi+d\rho_{\E,s}(\xi)$.

Assuming regularity, we are able to characterize the asymptotic behavior of $\l_L$ at interior points of $\sigma_{\ess}(H_V)$.
\begin{theorem}\label{thm1}
	Let $V$ be a Stahl--Totik regular potential such that $\E=\sigma_{\ess}(H_V)$ is Dirichlet regular and $\mu$ the corresponding spectral measure. Let $I\subset\Int(\E)$ be a closed interval such that $\mu$ is absolutely continuous in a neighborhood of $I$ and its density $f_\mu$ is positive and continuous at every point of $I$. Then we have 
	\begin{align}\label{eq:CFasymp}
	\lim\limits_{L\to\infty}L\l_{L}(\xi)=\frac{f_\mu(\xi)}{f_\E(\xi)},
	\end{align}
uniformly for $\xi\in I$.
\end{theorem}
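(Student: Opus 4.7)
My plan is to prove \eqref{eq:CFasymp} by a sandwich argument: establish matching upper and lower bounds on $L\lambda_L(\xi)$ which both converge to $f_\mu(\xi)/f_\E(\xi)$. The starting point is the variational form of the Christoffel function. Weyl--Kodaira theory shows that the transform $f\mapsto \int_0^L f(x) v(x,\cdot)\,dx$ is an isometric embedding of $L^2(0,L)$ into $L^2(\mu)$, so its range $\mathcal{PW}_L\subset L^2(d\mu)$ is a reproducing kernel Hilbert space of entire functions whose diagonal at $\xi$ equals $1/\lambda_L(\xi)$; equivalently,
\[
\lambda_L(\xi)=\inf\left\{\int |g(\zeta)|^2\,d\mu(\zeta)\;:\;g\in\mathcal{PW}_L,\ g(\xi)=1\right\}.
\]
Elements of $\mathcal{PW}_L$ play the role that polynomials of degree $\le n$ play in the OPRL theory, with their effective ``exponential type'' encoded by the Martin function $M_\E$ normalized as in the introduction.

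For the upper bound I would adapt Totik's polynomial-pulling construction to the continuum setting, producing an explicit $g_L\in \mathcal{PW}_L$ with $g_L(\xi)=1$ whose $L^2$-mass is concentrated in a window around $\xi$ of length $\sim 1/L$. Two ingredients would drive this: (i) Stahl--Totik regularity combined with the higher-order Martin expansion from \cite{EichLuk}, which forces the zeros of $\zeta\mapsto v(L,\zeta)$ to equidistribute with respect to $\rho_\E$ and rules out exponential growth of $v(\cdot,\zeta)$ for $\zeta\in\E$; and (ii) continuity and positivity of $f_\mu$ at $\xi$, which converts the estimate $\int|g_L|^2\,d\mu$ into $(1+o(1))f_\mu(\xi)\int|g_L(\zeta)|^2\,d\zeta$ over a small neighborhood of $\xi$. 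A suitable $g_L$ can then be built as a normalized product indexed by the zeros of an auxiliary member of $\mathcal{PW}_L$ near $\xi$, the local density of these zeros (measured against $f_\E$) producing the factor $1/(Lf_\E(\xi))$. For the lower bound I would use the local minorant $d\mu\ge (f_\mu(\xi)-\epsilon)\,d\zeta$ on a short interval around $\xi$ to reduce the infimum to a Paley--Wiener-type extremal problem for entire functions whose effective type is again dictated by $M_\E$; its value is $1/(Lf_\E(\xi))$ up to lower order.

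I expect the main obstacle to be the upper bound: unlike the OPRL case there is no polynomial conformal map to a reference equilibrium measure, and both $\E$ and $\mu$ are unbounded. A clean route is probably to first reduce to a local comparison with the free Schr\"odinger operator ($V\equiv 0$, $\E=[0,\infty)$), where $v(x,z)=\cos(\sqrt{z}\,x)$ and the extremal problem can be solved explicitly via Fej\'er-type kernels, and then to transfer the asymptotics to the general case through Stahl--Totik regularity and the Martin expansion of \cite{EichLuk}. Uniformity of the convergence in $\xi\in I$ then follows from the compactness of $I$ together with the continuity and positivity of $f_\mu$ and $f_\E$ on $I$, provided that the test-function construction in the upper bound and the interval choice in the lower bound depend continuously on $\xi$.
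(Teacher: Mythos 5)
Your outline correctly identifies several of the right ingredients — the variational characterization of $\l_L(\xi)$ over the reproducing-kernel space $B(E_L)$, the use of local continuity of $f_\mu$ to localize the $L^2(\mu)$-norm, and the role of Stahl--Totik regularity and Dirichlet regularity in killing exponential growth of $v(\cdot,\xi)$ for $\xi$ near $\E$. However, there is a genuine gap in the upper bound, and it is in exactly the place you flag as the main obstacle. You propose to solve an explicit extremal problem for $V\equiv 0$ and then ``transfer the asymptotics to the general case through Stahl--Totik regularity and the Martin expansion.'' This transfer mechanism does not exist as stated: regularity and the Martin expansion control $\frac1L\log|v(L,\zeta)|$ and therefore the \emph{averaged} zero distribution of $v(L,\cdot)$, but they are far too coarse to produce the constant $f_\E(\xi)$ in the Christoffel-function asymptotics. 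Your alternative sketch — building $g_L$ as a normalized product over the zeros of an auxiliary member of $\cP\cW_L$ near $\xi$ with local density $Lf_\E(\xi)$ — presupposes knowledge of the \emph{local} zero spacing, which is precisely the clock-spacing statement (Theorem \ref{thm3}) that is derived from Theorem \ref{thm1}; so that route is circular.

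The paper's resolution is a two-sided comparison argument, inspired by Simon's two extensions of Lubinsky's theorem, which you do not have. The crucial structural fact (recorded as \eqref{eq:SetEqual}) is that, as a \emph{set}, $B(E_L)$ is independent of $V$: it is always $S_L = \{\int_0^L f(x)\cos(\sqrt z\,x)\,dx : f\in L^2(0,L)\}$. This means the extremizer $Q_L^V(\cdot,\xi_0)$ for one potential is an admissible test function for any other. The reference object is not the free Schr\"odinger operator but a \emph{finite-gap} potential $V_r$ in the isospectral torus of a finite-gap set $\E_r\supset\E$ (with $\E_r\downarrow\E$); for these the Weyl solution is an explicit Baker--Akhiezer function $e^{i\t_{\E_r}(\xi)x}f(x,\xi)$, and a Wronskian computation (Lemma \ref{lem:WronskianIdent}) yields $K_L(\xi,\xi) = Lf_{\E_r}(\xi)/f_{\mu_r}(\xi) + O(1)$ directly. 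The comparison (Theorem \ref{thm:VariationalPrinc}) takes the extremizer for $V_r$, multiplies it by high powers of the explicit localizing kernel $F_c$ of Lemma \ref{lem:funcionF} (to damp the contribution away from $\xi_0$) and by a polynomial $P$ vanishing at the finitely many point masses of $\mu$ in the gaps of a $\delta$-thickened spectrum, and tests it against $\mu$; the exponential growth estimate of Theorem \ref{thm:regBounds} (this is where Dirichlet regularity enters) controls the tails, and a careful choice $N\sim\e L$ closes the estimate. Swapping the roles of $V$ and $V_r$, and then taking $r\downarrow 0$ and using $f_{\E_r}\uparrow f_\E$ uniformly on $I$ (Lemma \ref{lem:AppendixMartinMeasure}), gives both directions of \eqref{eq:CFasymp}. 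Without this finite-gap comparison and the set-equality of de Branges spaces, the factor $f_\E(\xi)$ for a general $\E$ cannot be produced by the methods you describe.
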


Stahl--Totik regularity is a quite general property. For sufficiently nice sets $\E$, regularity follows from
\begin{align}\label{eq:regCrit}
f_\mu>0\quad \text{ Lebesgue a.e. on }\E.
\end{align}
To be precise, by the Widom criterion \cite[Theorem 1.8]{EichLuk} $V$ is regular if the harmonic measure of $\bbC\setminus\E$ is absolutely continuous with respect to $\mu$. Thus, if $\E$ is such that the harmonic measure is mutually absolutely continuous with the Lebesgue measure restricted to $\E$, \eqref{eq:regCrit} implies Stahl--Totik regularity. By \cite{SodYud97}, mutual absolute continuity of the harmonic and the Lebesgue measure holds for regular Parreau--Widom sets. These sets are well studied in inverse spectral theory \cite{SodinYudSturmLiouville,BDGL,EVY}. Every set which is homogeneous in the sense of Carleson, i.e., sets $\E$ for which there exists $\tau>0$ so that 
\[
|\E\cap[\xi_0-\e,\xi_0+\e]|\geq \tau\e,\quad \forall \xi_0\in \E, \forall \e\in (0,1].
\] 
is a regular Parreau-Widom set \cite{JonesMarshall}. In particular, for finite gap sets or for spectra of Schr\"odinger operators with periodic potentials \eqref{eq:regCrit} implies that $V$ is Stahl-Totik regular. 

Christoffel functions are well studied in the setting of orthogonal polynomials. In this case, the Christoffel function is defined similar to \eqref{def:Christoffel}, but the Neumann solution at $L$ is substituted by the orthonormal polynomial of degree $n$.  For compactly supported measures, typical results show that under certain assumptions
\begin{align}\label{CDPoly}
\lim\limits_{n\to\infty}n\l_{n}(\xi)=\frac{f_\mu(\xi)}{f_\E(\xi)},
\end{align}
where $\l_{n}(\xi)$ is the Christoffel function associated to the orthonormal polynomials and $f_\E(\xi)$ denotes the density of the equilibrium measure. 
A fundamental result of M\'at\'e--Nevai--Totik \cite{MNT} shows \eqref{CDPoly} for the case $\E=[-2,2]$. More precisely, it is shown that \eqref{CDPoly} holds provided that $\mu$ is Stahl--Totik regular on $[-2,2]$, $f_\mu(\xi)>0$, $\log f_\mu$ is integrable in a neighborhood of $\xi$, and $\xi$ is a Lebesgue point of both the measure $\mu$ and the Szeg\H o function associated to $f_\mu$. This has been extended by Totik to arbitrary compact sets by using the polynomial preimage method \cite{Totik2000AssympChris}. 

Our approach is inspired by a method used by Simon and we obtain a full analog for continuum Schr\"odinger operators of all results in \cite{SimonTwoExt08}. Let us mention that the assumptions in \cite{SimonTwoExt08} or in Theorem \ref{thm1} are stronger than the ones in \cite{MNT}. However, the conclusion is also stronger, since uniformity in \eqref{eq:CFasymp} require continuity of $f_\mu$, see also \cite{TotikUniversality09}. It as an interesting question if our method could also be used to prove \eqref{eq:CFasymp} under Lebesgue point and local Szeg\H o conditions as used by M\'at\'e--Nevai--Totik.

Limits of Christoffel functions for continuum Schr\"odinger operators were first studied by Maltsev in \cite{MaltsevCMP}.  At that time the notion of regularity for continuum Schr\"odinger operators was not available and Maltsev proved \eqref{eq:CFasymp} for potentials $V=\mathring V +\tilde V$, where $\mathring V$ is a periodic continuous potential, $\tilde V$ is so that $\sigma_{\ess}(V)=\sigma_{\ess}(\mathring V)$ and  $\tilde V$ is C\'esaro decaying, i.e., 
\begin{align}\label{eq:CS}
\lim\limits_{L\to \infty}\frac{1}{L}\int_0^L|\tilde V(x)|dx=0.
\end{align}
Thus, our result generalize \cite{MaltsevCMP} in several directions. First of all, if $\E$ is the spectrum of a continuum Schr\"odinger operator, then generically there is no periodic potential so that that the essential spectrum of the associated operator is $\E$. Moreover, even if $\E$ is the spectrum of a periodic Schr\"odinger operator, a regular potential does not necessarily satisfy \eqref{eq:CS}. A counterexample can be found even in the simplest case $\E=[0,\infty)$ with $\mathring V \equiv 0$. It is shown in \cite[Example 1.13]{EichLuk} that the potential defined piecewise by $V(x) = (-1)^{\lfloor 2n(x-n) \rfloor}$ on $x \in [n-1,n)$ for integer $n$ is regular with $\sigma_\ess(L_V) = [0,\infty)$, but \eqref{eq:CS} does not hold. On the other hand, since periodic potentials are regular, it follows from \eqref{eq:StahlTotik1} and \eqref{eq:CS} that the potentials considered in \cite{MaltsevCMP} are Stahl--Totik regular. 

We turn to applications of our main theorem. The Christoffel-Darboux kernel is defined by 
\[
K_L(z,w)=\int_0^Lv(x,z)\overline{v(x,w)}dx.
\]
For $\xi\in\bbR$, bulk universality limits are double scaling limits of the type
\begin{align*}
\lim\limits_{L\to\infty}\frac{K_L\left(\xi+\frac{z}{\tau_L(\xi)},\xi+\frac{w}{\tau_L(\xi)}\right)}{K_L(\xi,\xi)}=\frac{\sin(\eta\pi(z-\overline{w}))}{\eta\pi (z-\overline{w})}.
\end{align*}
Universality limits were often studied at explicit polynomial scales. Recent results suggest that universality limits at such scales is a combination of two different phenomena. One is universality at scale $\l_{L}(\xi)$, studied in great generality in \cite{ELS}; the other is the explicit asymptotics of $\l_{L}(\xi)$ now provided by Theorem \ref{thm1}. Thus, we can combine Theorem \ref{thm1} with the results from \cite{ELS} to obtain universality limits at scale $L$.

\begin{theorem}\label{thm2}
	With the assumptions of Theorem \ref{thm1} we have 
	\begin{align}\label{eq:universality}
	\lim\limits_{L\to\infty}\frac{K_L\left(\xi+\frac{z}{L},\xi+\frac{w}{L}\right)}{K_L(\xi,\xi)}=\frac{\sin(\pi f_\E(\xi)(z-\overline w))}{\pi f_\E(\xi)(z-\overline w)},
	\end{align}
	uniformly for $\xi\in I$.
\end{theorem}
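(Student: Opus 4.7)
The plan is to combine the local universality criterion at scale $\lambda_L(\xi)$ from \cite{ELS} with the explicit asymptotic for $\lambda_L(\xi)$ provided by Theorem~\ref{thm1}. The proof is then essentially a change of variables.

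Under the hypotheses of Theorem~\ref{thm1}, the criterion of \cite{ELS} applies and yields, uniformly for $\xi \in I$ and locally uniformly for $(a,b)\in\bbC^2$,
\begin{equation}\label{eq:localuniv}
\lim_{L\to\infty}\frac{K_L\bigl(\xi+a\lambda_L(\xi),\,\xi+b\lambda_L(\xi)\bigr)}{K_L(\xi,\xi)}=\frac{\sin\bigl(\pi f_\mu(\xi)(a-\overline b)\bigr)}{\pi f_\mu(\xi)(a-\overline b)}.
\end{equation}
I would then substitute $a = z/(L\lambda_L(\xi))$ and $b = w/(L\lambda_L(\xi))$, so that $a\lambda_L(\xi) = z/L$ and $b\lambda_L(\xi) = w/L$. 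By Theorem~\ref{thm1}, $a \to zf_\E(\xi)/f_\mu(\xi)$ uniformly in $\xi\in I$, and similarly for $b$. Inserting this into \eqref{eq:localuniv} and using local uniform convergence in $(a,b)$,
\[
\lim_{L\to\infty}\frac{K_L(\xi+z/L,\,\xi+w/L)}{K_L(\xi,\xi)} = \frac{\sin\!\left(\pi f_\mu(\xi)\cdot\frac{f_\E(\xi)}{f_\mu(\xi)}(z-\overline w)\right)}{\pi f_\mu(\xi)\cdot\frac{f_\E(\xi)}{f_\mu(\xi)}(z-\overline w)} = \frac{\sin(\pi f_\E(\xi)(z-\overline w))}{\pi f_\E(\xi)(z-\overline w)}.
\]

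Uniform convergence in $\xi\in I$ is inherited from Theorem~\ref{thm1}: the limit $f_\mu(\xi)/f_\E(\xi)$ is positive and continuous on $I$, so $\xi\mapsto 1/(L\lambda_L(\xi))$ converges uniformly on $I$ to a positive continuous limit. Combined with local uniformity in $(a,b)$ of the \cite{ELS} limit, the substitution preserves uniformity in $\xi$. The main point is essentially bookkeeping; there is no substantive obstacle beyond this, since all the analytic content is packaged in Theorem~\ref{thm1} and the \cite{ELS} criterion.
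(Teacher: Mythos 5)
Your proposal follows essentially the same route as the paper: invoke the local universality criterion of \cite{ELS} at scale $\lambda_L(\xi)$, then use the asymptotic $L\lambda_L(\xi)\to f_\mu(\xi)/f_\E(\xi)$ from Theorem~\ref{thm1} to change scales, exploiting local uniformity of the \cite{ELS} limit in the rescaled variables. The bookkeeping with $a-\overline b = (z-\overline w)/(L\lambda_L(\xi)) \to (z-\overline w)f_\E(\xi)/f_\mu(\xi)$ is correct and yields the stated sinc kernel.

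The one step you gloss over is the verification that the \cite{ELS} criterion actually applies. Its hypothesis is not directly ``$\mu$ absolutely continuous near $I$ with $f_\mu$ positive and continuous,'' but rather that the non-tangential limit
\[
\frac{1}{\pi}\lim_{z\to\xi}\Im m(z)=f_\mu(\xi)\in(0,\infty)
\]
exists at each $\xi\in I$. You write ``the criterion of \cite{ELS} applies'' without justifying this; the paper supplies the argument by appealing to standard results on boundary values of Poisson integrals (Rudin, Theorems 11.22--11.23) to pass from the assumptions on $f_\mu$ to the existence of the non-tangential limit of $\Im m$. That deduction, while routine, is a necessary bridge and should be stated explicitly, since $m$ is tied to $\mu$ only through its Herglotz integral representation and the diagonal boundary behavior is what the \cite{ELS} theorem is keyed to. With that bridge added, your proof is complete and matches the paper's.
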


As a consequence of Theorem \ref{thm2} we will obtain asymptotic equal eigenvalue spacing of the eigenvalues of the finite range truncations. It is a common scheme already observed by Wigner for random matrix ensembles that the global asymptotic distribution of the eigenvalues depends on the particular model, however the local microscopic scale exhibits universal behavior. 

For any $L>0$, let $\nu_L$ denote the zero counting measure for $\partial_Lv(L,\cdot)$ divided by $L$
\[
\nu_L=\frac{1}{L}\sum_{\xi:\ \partial_Lv(L,\xi)=0}\delta_\xi.
\]
The measure $\nu_L$ is intimately related to the eigenvalues of the finite range truncation of $H_V$. Namely, if $H^{L}_V$ denotes the restriction of $H_V$ onto $(0,L)$ with Neumann boundary condition at $L$, then $H^L_{V}$ has purely discrete spectrum given by the zeros of $\partial_Lv(L,\cdot)$. 

The global distribution is given by the Martin measure $\rho_\E$. That is, regularity of $V$ implies that $\nu_L$ has a weak-$*$ limit and that this limit is given by $\rho_\E$.  However, it follows from the Freud--Levin theorem \cite{Freud,LevinLubinsky08} that \eqref{eq:universality} implies equal eigenvalue spacing at scale $1/L$.

For $L>0$ and $\xi\in I$, we denote by $\xi_j^{L}(\xi)$ for $j\in \bbZ$ the zeros of $\partial_Lv(L,\cdot)$ counted from $\xi$, i.e.,
\[
\dots < \xi_{-2}^{L}(\xi) <  \xi_{-1}^{L}(\xi) < \xi \le  \xi_{0}^{L}(\xi) <  \xi_{1}^{L}(\xi) <  \dots
\]
with no zeros of $\partial_Lv(L,\cdot)$ between $\xi_j^{L}$ and $\xi_{j+1}^{L}$.
\begin{theorem}\label{thm3}
	With the assumptions of Theorem \ref{thm1} we have that the zeros of $\partial_Lv(L,\cdot)$ admit uniform clock behavior on $I$, i.e., for fixed $j\in\bbZ$ 
	\begin{align}\label{spacing}
	\lim\limits_{L\to\infty}Lf_\E(\xi)\left(\xi_{j+1}^{L}(\xi)-\xi_{j}^{L}(\xi)\right)=1
	\end{align} 
	uniformly for $\xi\in I$. 
\end{theorem}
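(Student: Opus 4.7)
The strategy is the Freud--Levin principle: deduce clock spacing of the Neumann eigenvalues of $H_V^L$ from the sine-kernel universality of Theorem \ref{thm2} by applying Hurwitz's theorem to suitably rescaled reproducing kernels. The key tool is the continuum Christoffel--Darboux formula, which follows from a Wronskian computation for two Schr\"odinger solutions together with the Neumann initial data $v(0,z)=1$, $\partial_xv(0,z)=0$:
\[
K_L(z,w) = \frac{v(L,z)\,\partial_L v(L,\bar w) - \partial_L v(L,z)\,v(L,\bar w)}{z-\bar w}.
\]
In particular, at a Neumann eigenvalue $\xi_\ast\in\bbR$ (zero of $\partial_L v(L,\cdot)$),
\[
K_L(z,\xi_\ast) = -\frac{\partial_L v(L,z)\,v(L,\xi_\ast)}{z-\xi_\ast},
\]
so the zeros of $z\mapsto K_L(z,\xi_\ast)$ other than $\xi_\ast$ itself are precisely the remaining Neumann eigenvalues.

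First I would establish that the nearest Neumann eigenvalue lies at distance $O(1/L)$ from $\xi$. Set $\tilde G_L(z):= z\,K_L(\xi+z/L,\xi)/K_L(\xi,\xi)$; by Theorem \ref{thm2}, $\tilde G_L(z)\to \sin(\pi f_\E(\xi)z)/(\pi f_\E(\xi))$ uniformly on compacts in $z$ and in $\xi\in I$. The limit has a simple zero at $z=1/f_\E(\xi)$, and since $\tilde G_L$ is real-analytic and real-valued on $\bbR$, Hurwitz yields a real zero $z^{\mathrm{mix}}_L\to 1/f_\E(\xi)$. Via the CD identity, $\xi+z^{\mathrm{mix}}_L/L$ is an eigenvalue of the Sturm--Liouville problem on $[0,L]$ with Neumann BC at $0$ and the self-adjoint BC at $L$ determined by $\alpha_L:=\partial_L v(L,\xi)/v(L,\xi)$, for which $\xi$ is itself an eigenvalue. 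Classical Sturm interlacing between distinct self-adjoint BCs at $L$ then forces at least one Neumann eigenvalue in $(\xi,\xi+z^{\mathrm{mix}}_L/L]$, so $a_L:=L(\xi_0^L(\xi)-\xi)$ is bounded above by $1/f_\E(\xi)+o(1)$, uniformly in $\xi\in I$.

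Next I would Hurwitz at an actual Neumann eigenvalue. Passing to a subsequence, $a_L\to a^\ast\in[0,1/f_\E(\xi)]$. Setting $\xi_\ast^L:=\xi_0^L(\xi)=\xi+a_L/L$, Theorem \ref{thm2} (uniform on compacts in both arguments) gives
\[
F_L(z):=\frac{K_L(\xi+z/L,\xi_\ast^L)}{K_L(\xi,\xi)}\longrightarrow \frac{\sin(\pi f_\E(\xi)(z-a^\ast))}{\pi f_\E(\xi)(z-a^\ast)}
\]
uniformly on compacts. The limit has simple zeros exactly at $a^\ast+k/f_\E(\xi)$, $k\in\bbZ\setminus\{0\}$, with a removable singularity at $a^\ast$. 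By the CD identity, $(z-a_L)K_L(\xi+z/L,\xi_\ast^L) = -L\,\partial_L v(L,\xi+z/L)\,v(L,\xi_\ast^L)$, so the zeros of $F_L$ in $\bbC$ are precisely $\{L(\xi_j^L(\xi)-\xi):j\ne 0\}$. Hurwitz then yields, for each fixed $j\ne 0$, $L(\xi_j^L(\xi)-\xi)\to a^\ast+j/f_\E(\xi)$, while $L(\xi_0^L(\xi)-\xi)=a_L\to a^\ast$ by construction. Differencing eliminates $a^\ast$:
\[
L f_\E(\xi)\bigl(\xi_{j+1}^L(\xi)-\xi_j^L(\xi)\bigr)\longrightarrow 1.
\]
Since the limit is independent of the subsequence, it holds as $L\to\infty$. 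Uniformity in $\xi\in I$ follows from the quantitative (Rouch\'e) form of Hurwitz, which transfers sup-norm convergence to pointwise convergence of zeros with uniform rate, combined with the uniformity in Theorem \ref{thm2} and the continuity and positivity of $f_\E$ on $I$.

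\textbf{Main obstacle.} The backbone, namely sine-kernel universality, is supplied by Theorem \ref{thm2}; the only step requiring genuine thought is the bound on $a_L$. The difficulty is that the zeros of $K_L(\xi+\cdot/L,\xi)$ correspond to a $\xi$-dependent \emph{mixed}-BC spectrum rather than to Neumann eigenvalues, so universality of the CD kernel does not directly produce Hurwitz zeros of $\partial_L v(L,\cdot)$; Sturm interlacing bridges the gap. The remaining work---carefully tracking the drifting removable singularity at $z=a_L$ and propagating Hurwitz's conclusion to uniformity in $\xi$---is standard once this local density statement is in place.
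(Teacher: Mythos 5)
Your proof is correct and follows essentially the same route as the paper's: bound the rescaled distance to the nearest Neumann eigenvalue by an interlacing argument, re-center the Christoffel--Darboux kernel at that eigenvalue, and apply Hurwitz to the universality limit of Theorem~\ref{thm2}. The paper phrases the interlacing step through monotonicity of the Herglotz function $m_L(z)=v(L,z)/v'(L,z)$ between its poles (so that between $\xi$ and the first level set $m_L=m_L(\xi)$ there is exactly one pole), which is the same fact as your Sturm oscillation argument between two self-adjoint boundary conditions at $L$. The one place your write-up is slightly weaker is the passage to a subsequence $a_L\to a^\ast$: both the subsequence and $a^\ast$ may depend on $\xi$, so the asserted uniformity in $\xi$ does not follow directly from a $\xi$-dependent subsequential limit, and the closing ``quantitative Rouch\'e'' remark needs unpacking. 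The cleaner route, which the paper implicitly takes, is to compare $K_L(\xi+z/L,\xi+a_L/L)/K_L(\xi,\xi)$ directly with $\sin\bigl(\pi f_\E(\xi)(z-a_L)\bigr)/\bigl(\pi f_\E(\xi)(z-a_L)\bigr)$, using that Theorem~\ref{thm2} is uniform over compacts in both arguments and that $a_L$ is uniformly bounded; Rouch\'e then locates the remaining Neumann zeros within $o(1)$ of $a_L+j/f_\E(\xi)$ uniformly in $\xi$, and differencing removes $a_L$ with no subsequence needed. Your observation that the spacing of the limit zeros is independent of $a^\ast$ is precisely why this repair works.
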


The organization of the paper is as follows.  In Section 2 we recall concepts from the theory of Stahl--Totik regularity for continuum Schr\"odinger operators and prove that the additional assumption of Dirichlet regularity of $\E$ leads to uniformity in the asymptotic estimates. This is crucial to control the exponential growth of the Neumann solution close to $\E$. In Section 3 we recall aspects of the spectral theory for continuum Schr\"odinger operators and show how this can be viewed as a special case of the general theory of canonical systems. In particular, we show that the Christoffel function can also be defined through an extremal problem. In Section 4 we prove asymptotics of the Christoffel function for finite gap potentials. Section 5 is devoted to the proofs of the main theorems. We also provide an appendix in which we recall some parts from the theory of canonical systems. 

\subsection*{Acknowledgements}
I would like to thank Milivoje Luki\'c and Brian Simanek for helpful discussions. 

\section{Stahl-Totik regularity}

From the discrete setting it is known that in order to show \eqref{CDPoly}, in addition to local properties of the measure, some additional global assumption is needed \cite{Totik2000AssympChris}. A common sufficient assumption is to assume that the measure is regular in the sense of Stahl and Totik. In \cite{EichLuk} a corresponding theory was developed for continuum Schr\"odinger operators with  uniformly locally integrable potential.  In the following, we show that an additional uniformity is obtained in the estimates under the additional assumption that the underlying spectrum is regular for the Dirichlet problem. 

For $z\in\bbC$ the Dirichlet solution, $u(x,z)$ is the solution of \eqref{defnv} with initial condition $u(0,z)=0, \partial_xu(0,z)=1$. Stahl--Totik regularity as defined in the introduction was linked to exponential growth of the Dirichlet solutions. However, the same proofs also characterize the exponential growth of the Neumann solution. By $\cD'(\bbC)$ we denote the space of distributions and recall that subharmonic functions can be viewed as representatives of real-valued distributions with non-negative Laplacian. 

\begin{lemma}\label{lem:NeumannRegular}
	Let $V$ obey \eqref{L1locunif} and $v(x,z)$ denote the Neumann solution.
	Then:
	\begin{enumerate}[(a)]
		\item\label{it:Reg1} For any $x > 0$,  the function $\log \lvert v(x,z) \rvert$ is a subharmonic function on $\bbC$.
		\item\label{it:Reg2} The family of functions $\{\log \lvert v(x,z) \rvert \}_{x \ge 1}$ is precompact in $\cD'(\bbC)$.
	\end{enumerate}
Moreover, if $V$ is Stahl--Totik regular, then:
\begin{enumerate}[(i)]
		\item\label{it:Reg3} The functions  $\log \lvert v(x,z) \rvert$ converge as $x \to \infty$ to $M_\E(z)$ in the distributional sense as well as uniformly on compact subsets of $\bbC_+$. 
		\item\label{it:Reg4} For  all $z\in\bbC$, $\limsup_{x\to\infty}\frac{1}{x}\log|v(x,z)|\leq M_\E(z)$. 
	\end{enumerate}
\end{lemma}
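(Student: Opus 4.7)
The plan is to mirror the arguments for the Dirichlet solution $u(x,z)$ developed in \cite{EichLuk}: the structural ingredients depend only on $v(x,\cdot)$ being a nontrivial entire solution of the Schr\"odinger equation and on its participation in the unimodular transfer matrix alongside $u(x,\cdot)$, not on the choice of initial condition.

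For \ref{it:Reg1}, standard ODE theory with analytic parameter dependence shows that for each fixed $x>0$ the map $z \mapsto v(x,z)$ is entire, hence $\log\lvert v(x,\cdot)\rvert$ is subharmonic on $\bbC$. For \ref{it:Reg2}, a Gronwall-type bound on the integral equation for $v$, combined with \eqref{L1locunif}, yields $\lvert v(x,z)\rvert \leq e^{C(K) x}$ uniformly for $z$ in any compact $K\subset\bbC$ and $x\geq 1$, giving a locally uniform upper bound on the normalized family $\{\tfrac{1}{x}\log\lvert v(x,\cdot)\rvert\}_{x\geq 1}$ of subharmonic functions. Precompactness in $\cD'(\bbC)$ then follows from the standard compactness theorem for subharmonic families with a common locally uniform upper bound.

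For \ref{it:Reg3}, the core step is to identify every $\cD'$-limit point of $\tfrac{1}{x}\log\lvert v(x,\cdot)\rvert$ with $M_\E$. Since $v(x,\cdot)$ and $u(x,\cdot)$ fit together as columns of the unimodular transfer matrix, the Dirichlet result of \cite{EichLuk} together with unimodularity forces any subsequential limit to be a nonnegative subharmonic function on $\bbC$ which is harmonic on $\Omega$, vanishes quasi-everywhere on $\E$, and carries the $\Re\sqrt{-z}$ leading asymptotics at $\infty$ guaranteed by Stahl--Totik regularity \eqref{eq:StahlTotik1}; uniqueness of the normalized Martin function $M_\E$ then identifies the limit. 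Because solutions of the Schr\"odinger equation have only real zeros (a Wronskian / Green's identity argument), $\tfrac{1}{x}\log\lvert v(x,\cdot)\rvert$ is in fact harmonic on $\bbC_+$, so distributional convergence upgrades there to locally uniform convergence by Harnack equicontinuity.

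Finally, \ref{it:Reg4} follows from the standard upper envelope / Hartogs lemma argument: subharmonicity gives
\[
\tfrac{1}{x}\log\lvert v(x,z_0)\rvert \leq \frac{1}{\pi r^2}\int_{D(z_0,r)} \tfrac{1}{x}\log\lvert v(x,w)\rvert\,\dd A(w)
\]
for every $z_0\in\bbC$ and $r>0$, and by \ref{it:Reg3} the right-hand side converges to $\tfrac{1}{\pi r^2}\int_{D(z_0,r)} M_\E(w)\,\dd A(w)$; letting $r\to 0$ and using upper semicontinuity of $M_\E$ yields the claimed bound. The main obstacle is \ref{it:Reg3}: one must rule out ``mass loss'' to $\bbR$ in the distributional limit and secure the correct asymptotic behavior at $\infty$ so that the limit is $M_\E$ itself rather than merely a subharmonic minorant of it — this is precisely where the full strength of the Stahl--Totik hypothesis enters.
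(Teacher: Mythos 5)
Your items (a) and (iv) match the paper's argument (the paper's ``principle of descents for subharmonic functions'' is exactly the sub-mean-value/Hartogs argument you spell out), but there are problems in (b) and (iii).

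For (b), a locally uniform upper bound on a family of subharmonic functions does \emph{not} by itself give precompactness in $\cD'(\bbC)$: the standard compactness theorem only yields the dichotomy that every sequence has a subsequence which either converges in $L^1_{\loc}$ to a subharmonic function or converges uniformly on compacts to $-\infty$, and in the latter case there is no distributional limit. One must also rule out the degenerate alternative, which requires a pointwise (or integral) lower bound. The paper explicitly supplies this missing ingredient: a pointwise lower bound coming from boundedness of the diagonal Green function for the Neumann half-line operator (analogous to \cite[Theorem~4.3]{EichLuk}). Your Gronwall bound addresses only the upper bound, so as written (b) is incomplete.

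For (iii), your route is genuinely different from the paper's and substantially harder. You try to identify the $\cD'$-limit by showing every subsequential limit is a nonnegative subharmonic function, harmonic on $\Omega$, vanishing q.e.\ on $\E$, with the right asymptotics at $\infty$, and then invoke uniqueness of the normalized Martin function. That is essentially reproving the Dirichlet-solution regularity theorem from scratch, and the steps are not justified: it is not explained how ``unimodularity of the transfer matrix'' forces the Riesz measure of the limit to sit on $\E$ (the zeros of $v(x,\cdot)$ can lie in the gaps and below $\min\E$), nor how Stahl--Totik regularity \eqref{eq:StahlTotik1} transfers the $\Re\sqrt{-z}+\frac{a_\E}{2\sqrt{-z}}$ asymptotics to your limit. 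The paper avoids all of this with a much cleaner observation: for $z\in\bbC\setminus\bbR$, write $v(\cdot,z)=\alpha u(\cdot,z)+\beta\psi(\cdot,z)$; by \cite[Theorem~1.5]{EichLuk} the Dirichlet solution $u$ grows exponentially at rate $M_\E(z)>0$, the Weyl solution $\psi$ decays, and $\alpha\neq0$ since a nonreal $z$ is not an eigenvalue, so $\frac1x\log|v(x,z)|\to M_\E(z)$ pointwise on $\bbC\setminus\bbR$. Combined with the precompactness in (b), this uniquely identifies the $\cD'$-limit. Your ancillary observations---that $v(x,\cdot)$ has no zeros in $\bbC_+$, so $\frac1x\log|v(x,\cdot)|$ is harmonic there and distributional convergence upgrades to locally uniform convergence by Harnack---are correct and are the same mechanism implicit in the paper's uniform-on-compacts claim; but the core identification of the limit should go through the Weyl-solution decomposition rather than the Martin-function-uniqueness route.
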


\begin{proof}
	\eqref{it:Reg1}: By general principles, for any $x > 0$, $v(x,z)$ is an entire function of $z$ \cite{PoschelTrubowitz}, so $\log \lvert v(x,z)\rvert$ is subharmonic.
	
	\eqref{it:Reg2}: Is analogous to the proof of \cite[Theorem~4.3]{EichLuk}: it is a consequence of locally uniform upper bounds which follow from general principles, and a pointwise lower bound which follows from boundedness of the diagonal Green function for the Schr\"odinger operator with Neumann boundary conditions.

	\eqref{it:Reg3}: If $V$ is Stahl--Totik regular and $u(x,z)$ denotes the Dirichlet solution, then by \cite[Theorem 1.5]{EichLuk} for any $z \in \bbC \setminus \bbR$,
	\[
	\lim_{x\to\infty} \frac 1x \log \lvert u(x,z) \rvert = M_\E(z).
	\]
	In words, $u(x,z)$ grows exponentially with rate $M_\E(z) > 0$. Moreover, the Weyl solution $\psi(x,z)$ decays, $\lim_{x\to\infty} \psi(x,z) = 0$. The Neumann solution $v(x,z)$ is a linear combination of $u(x,z)$ and $\psi(x,z)$ and it is not a multiple of $\psi(x,z)$ (since $z$ is not real, it is not an eigenvalue of the self-adjoint operator). Thus, $v(x,z)$ also obeys
	\[
	\lim_{x\to\infty} \frac 1x \log \lvert v(x,z) \rvert = M_\E(z), \qquad \forall z \in \bbC \setminus \bbR.
	\]
	This implies that $M_\E$ is the only possible subsequential limit of $\frac 1x \log \lvert v(x,z) \rvert$ in $\cD'(\bbC)$. By precompactness of the family, this implies convergence to $M_\E$ in the topology of $\cD'(\bbC)$.
	
	\eqref{it:Reg4}: Follows from convergence in $\cD'$ together with the principle of descents for subharmonic functions. 
\end{proof}

We will show that for Dirichlet regular sets \eqref{it:Reg4} holds uniformly on compact subsets of $\bbC$. This will follow from showing that \eqref{it:Reg4} holds not only pointwise but in the following stronger sense: Given a sequence $(z_n)_{n\in\bbN}$ such that $\lim_{n\to\infty}z_n=z_\infty\in\bbC$ and and increasing sequence $(x_n)_{n\in\bbN}$, $\lim_{n\to\infty}x_n=\infty$, then 
\[
\limsup_{n\to\infty}\frac{1}{x_n}\log|u(x_n,z_n)|\leq M_\E(z_\infty).
\]
If then in addition $M_\E$ is continuous (or merely lower semicontinuous), this implies that \eqref{it:Reg4} holds uniformly on compact subsets of $\bbC$. Since for Dirichlet regular sets $\E$, $M_\E$ is continuous \cite[Theorem 2.1]{EichLuk} this will prove the above claim.

We will need a version of principle of descents for Green potentials. Let $\Omega\subset\bbC$ be a Greenian domain and denote its Green function by $G_\Omega(z,w)$, cf. \cite[Section 4]{Classpotential}.  Let $\nu$ be a measure supported in $\Omega$ and define the Green potential of $\nu$ by 
\[
\Phi^G_\nu(z)=\int G_\Omega(z,t)d\nu(t)
\]
provided that there exists $z_0\in\Omega$ such that $\Phi^G_\nu(z_0)<\infty$. It defines a superharmonic function in $\Omega$. By \cite[Lemma 4.2.2]{Classpotential} this holds in particular, if $\nu$ is supported on a compact subset of $\Omega$. 
 
\begin{lemma}\label{lem:POD}
	Let $\Omega$ be a Greenian domain and let $\nu_n,\nu_\infty$ be finite measures with support in a common compact subset of $\Omega$ and $\lim_{n\to\infty}\nu_n=\nu_\infty$ in the weak-$*$ sense. Let $z_n\in\Omega$ with $\lim_{n\to\infty}z_n=z_\infty\in \Omega$. Then 
	\[
	\liminf_{n\to\infty}\Phi^G_{\nu_n}(z_n)\geq \Phi^G_{\nu_\infty}(z_\infty).
	\]
\end{lemma}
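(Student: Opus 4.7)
The plan is to reduce the statement to the classical Portmanteau-type inequality for weak-$*$ convergence of measures evaluated against lower semicontinuous functions bounded below. The two ingredients I will need are: (1) joint lower semicontinuity and non-negativity of the Green function $G_\Omega(z,t)$ on $\Omega\times\Omega$ (standard, with $G_\Omega = +\infty$ on the diagonal), and (2) weak-$*$ convergence of the product measures $\delta_{z_n}\otimes\nu_n$ to $\delta_{z_\infty}\otimes\nu_\infty$ on a suitable compact product set.

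First I would fix a compact set $K\subset\Omega$ that contains the supports of all $\nu_n$ and $\nu_\infty$, and which (by discarding finitely many terms if necessary) also contains $\{z_n\}_{n\geq N}$ and $z_\infty$. Then I would verify that $\delta_{z_n}\otimes\nu_n\to\delta_{z_\infty}\otimes\nu_\infty$ weakly-$*$ on $K\times K$: for any $f\in C(K\times K)$,
\[
\int f\,d(\delta_{z_n}\otimes\nu_n)=\int f(z_n,t)\,d\nu_n(t),
\]
and this converges to $\int f(z_\infty,t)\,d\nu_\infty(t)$ by adding and subtracting $\int f(z_\infty,t)\,d\nu_n(t)$, using uniform continuity of $f$ on the compact $K\times K$ together with the uniform bound on $\nu_n(K)$, and the weak-$*$ convergence of $\nu_n$ applied to the fixed continuous function $f(z_\infty,\cdot)$.

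Next, I would truncate: for $M>0$, set $G_\Omega^M=\min(G_\Omega,M)$. This is a bounded, non-negative, lower semicontinuous function on $K\times K$, so the standard Portmanteau theorem yields
\[
\liminf_{n\to\infty}\int G_\Omega^M\,d(\delta_{z_n}\otimes\nu_n)\geq \int G_\Omega^M\,d(\delta_{z_\infty}\otimes\nu_\infty).
\]
Using $G_\Omega\geq G_\Omega^M$ on the left-hand side and rewriting the integrals against product measures in the obvious way,
\[
\liminf_{n\to\infty}\Phi^G_{\nu_n}(z_n)\geq \int G_\Omega^M(z_\infty,t)\,d\nu_\infty(t).
\]
Then I would let $M\to\infty$ and invoke the monotone convergence theorem on the right-hand side (since $G_\Omega^M\nearrow G_\Omega$ pointwise and $G_\Omega\geq 0$), obtaining $\liminf_{n}\Phi^G_{\nu_n}(z_n)\geq\Phi^G_{\nu_\infty}(z_\infty)$, as desired.

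The argument is essentially routine once lower semicontinuity of $G_\Omega$ is known; the only mildly delicate step is the approximation from below that handles the logarithmic singularity of the Green function on the diagonal, which is cleanly dealt with by the truncation $G_\Omega^M$ followed by monotone convergence. Everything else is just bookkeeping for product measures on a compact set.
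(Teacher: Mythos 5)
Your proof is correct and uses essentially the same strategy as the paper: truncate the Green function at level $M$, pass to the limit in $n$ for fixed $M$, and then let $M\to\infty$ via monotone convergence. The only difference is cosmetic: you package the $n$-limit as weak-$*$ convergence of the product measures $\delta_{z_n}\otimes\nu_n$ against a lower semicontinuous test function via Portmanteau, whereas the paper observes that $G^M_\Omega$ is in fact (uniformly) continuous on $K\times K$ (since $G_\Omega$ is continuous in the extended sense) and computes the limit directly; both routes are valid and require the same ingredients (uniform continuity of the test function, uniform bound on $\nu_n(K)$, weak-$*$ convergence of $\nu_n$).
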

\begin{proof}
	By assumption there exists a compact set $K\subset\Omega$ such that $\supp\nu_n\cup\supp\nu_\infty\cup\{z_n\}_{n\in\bbN}\cup\{z_\infty\}\subset K$.
	For $M>0$ define
	\[
	G^M_\Omega(z,t)=\min\{ M, G_\Omega(z,t)\}.
	\]
	The claim will follow from uniform continuity of $G^M_\Omega$ on $K\times K$. By \cite[Theorem 4.1.9.]{Classpotential} $G_\Omega(z,t)$ is continuous on $K\times K$ in the extended sense (i.e. with the value $+\infty$ allowed). Thus it follows that $G^M_\Omega(z,t)$ is continuous on $K\times K$ and $G^M_\Omega(z,t)\leq M$. We conclude that $G^M_\Omega(z,t)$ is continuous in the standard sense and since $K$ is compact uniform continuity follows. 
	Since $\nu_\infty$ is a finite measure and $\nu_n\to\nu_\infty$ we have $\sup \nu_n(K)<\infty$. Now it follows from the monotone convergence theorem that 
	\begin{align*}
		\Phi^G_{\nu_\infty}(z_\infty)&=\lim\limits_{M\to\infty}\int G^M_\Omega(z_\infty,t)d\nu_\infty(t)\\
		&=\lim\limits_{M\to\infty}\lim_{n\to\infty}\int G^M_\Omega(z_n,t)d\nu_n(t)\\
		&\leq\lim\limits_{M\to\infty}\liminf_{n\to\infty}\int G_\Omega(z_n,t)d\nu_n(t)\\
		&=\liminf_{n\to\infty}\Phi^G_{\nu_n}(z_n).
	\end{align*}
\end{proof}

Following the proof of \cite[Theorem 2.7.4.1]{Azarin09} we obtain:
\begin{theorem}\label{thm:upperEnvSuperharmonic}
	Let $(u_n)_{n\in\bbN}$ and $u_\infty$ be superharmonic functions in $\bbC$ such that $\lim_{n\to\infty}u_n=u_\infty$ in $\cD'(\bbC)$. Let $z_n\in\bbC$ with $\lim_{n\to\infty}z_n=z_\infty\in \bbC$. Then
	\[
	\liminf_{n\to\infty}u_n(z_n)\geq u_\infty(z_\infty).
	\]
\end{theorem}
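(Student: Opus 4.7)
The plan is to derive the descent principle directly from the sub-mean-value property of superharmonic functions combined with the $L^1_{\loc}$-upgrade of distributional convergence available for super/subharmonic sequences, bypassing the Riesz decomposition machinery that underlies Lemma \ref{lem:POD}. The key inputs are (i) the classical fact from potential theory that super/subharmonic sequences converging in $\cD'(\bbC)$ automatically converge in $L^1_{\loc}(\bbC)$, and (ii) the fact that the value of a superharmonic function at a point is recovered as the limit of its ball averages.

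For (i), since $-u_n$ are subharmonic and $\cD'$-convergent to $-u_\infty$, they are locally uniformly bounded above (a standard consequence of distributional convergence within the subharmonic class), and a routine argument promotes distributional to $L^1_{\loc}$-convergence. With this in hand, I would combine the sub-mean-value inequality
\[
u_n(z_n)\ \geq\ A_r(u_n;z_n)\ :=\ \frac{1}{\pi r^2}\int_{B(z_n,r)}u_n\,dA
\]
with the claim that, for each fixed $r>0$,
\[
\lim_{n\to\infty} A_r(u_n;z_n)\ =\ A_r(u_\infty;z_\infty).
\]
To justify this latter limit I would split the difference into $A_r(u_n;z_n)-A_r(u_\infty;z_n)$ and $A_r(u_\infty;z_n)-A_r(u_\infty;z_\infty)$: the first piece is bounded by $(\pi r^2)^{-1}\|u_n-u_\infty\|_{L^1(B(z_\infty,2r))}$ and vanishes by $L^1_{\loc}$-convergence (using $B(z_n,r)\subset B(z_\infty,2r)$ for $n$ large), and the second piece equals $(\pi r^2)^{-1}\int u_\infty(\mathbf{1}_{B(z_n,r)}-\mathbf{1}_{B(z_\infty,r)})\,dA$, which vanishes because the symmetric difference of the two balls has Lebesgue measure tending to zero and absolute continuity of the Lebesgue integral for the fixed function $u_\infty\in L^1_{\loc}(\bbC)$ does the rest.

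Consequently $\liminf_{n\to\infty}u_n(z_n)\geq A_r(u_\infty;z_\infty)$ for every $r>0$. Sending $r\to 0^+$ and using the classical identity $\lim_{r\to 0^+}A_r(u_\infty;z_\infty)=u_\infty(z_\infty)$, valid for every superharmonic function at every point (monotonicity of the averages in $r$ together with the lower semicontinuity of $u_\infty$, with the convention $+\infty$ on both sides where appropriate), yields the claim. The main obstacle is the second bracket in the splitting: integrating the potentially unbounded $u_\infty$ against an indicator whose support depends on $n$. The elegant workaround is to first swap $u_n$ for $u_\infty$ and only then slide the center of the ball, reducing the sliding question to absolute continuity of the Lebesgue integral for the single fixed $L^1_{\loc}$ function $u_\infty$, which requires no uniform integrability of the sequence $u_n$.
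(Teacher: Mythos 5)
Your proof is correct and takes a genuinely different, more elementary route than the paper's. The paper follows Azarin: it reduces $u_n$ onto a closed ball via the regularized reduced function, identifies the reductions with Green potentials, passes to weak-$*$ convergence of the associated Riesz measures, and then invokes a separately proved descent principle for Green potentials (Lemma~\ref{lem:POD}, whose proof truncates the Green function and uses monotone convergence). You instead work directly from two elementary facts about superharmonic functions: the sub-mean-value inequality $u_n(z_n)\geq A_r(u_n;z_n)$ and the monotone recovery $A_r(u_\infty;z_\infty)\uparrow u_\infty(z_\infty)$ as $r\downarrow 0$, combined with the standard upgrade of $\cD'(\bbC)$-convergence to $L^1_{\loc}(\bbC)$-convergence for super/subharmonic sequences (Hörmander, \emph{Notions of Convexity}, Theorems~3.2.12--3.2.13; the paper relies on the same circle of facts via Azarin's Theorem~2.7.1.1). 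The decisive technical move in your argument --- splitting $A_r(u_n;z_n)-A_r(u_\infty;z_\infty)$ so that the ``sliding ball center'' term involves only the single fixed $L^1_{\loc}$ function $u_\infty$, where absolute continuity of the Lebesgue integral applies, while the sequence $u_n$ is integrated only against the fixed ball $B(z_\infty,2r)$ --- is exactly right and avoids any uniform-integrability hypothesis on $(u_n)$. Your route dispenses with Riesz decompositions and Green potentials entirely and is shorter and more self-contained for this specific lemma; the paper's approach instead packages the key limiting step into the reusable Lemma~\ref{lem:POD} about Green potentials.
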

\begin{proof}
	Fix $R>0$ such that $z_n,z_\infty\in B_{R}(0)$. By \cite[Theorem 2.7.1.1]{Azarin09} the $u_n$ are uniformly bounded from below on $B_{2R}(0)$ and thus we can assume that the $u_n$ and $u_\infty$ are non-negative there. 
	
	For a non-negative superharmonic function $u$ in a domain $\Omega$ and $E\subset\Omega$ let $\hat R_u^E$ denote the regularized reduced function \cite[Section 5.3]{Classpotential}\footnote{In \cite[Theorem 2.7.2.1]{Azarin09} the regularized reduction was introduced in a different way, but it follows from its characteristic properties that these two notions, up to switching from super- to subharmonic functions, coincide.}. View $u_n, u$ as non-negative superharmonic functions in $B_{2R}(0)$.  Set $K=\overline{B_{R}(0)}$ and
	\[
	v_n=\hat R_{u_n}^{K},\quad v=\hat R^{K}_u.
	\]
	Since $ K\subset B_{2R}(0)$ is compact we see that $v_n$ and $v$ are Green potentials \cite[Theorem 5.3.5]{Classpotential}, i.e., $v_n=\Phi^G_{\nu_n}$, $v=\Phi^G_{\nu}$ for some measures $\nu_n,\nu$ and the Green function of the domain $B_{2R}(0)$. By \cite[Theorem 5.3.4]{Classpotential} $v_n, v$ are harmonic outside of $K$ and it follows that $\nu_n,\nu$ are supported on $K$. Due to \cite[Theorem 2.7.2.2]{Azarin09}, $v_n\to v$ in $\cD'(B_{2R}(0))$ and thus since by \cite[Theorem 4.3.8]{Classpotential} $\nu_n,\nu$ are the Riesz measures of $v_n$ and $v$ we conclude that $\nu_n\to\nu$ in $C(K)^*$. Therefore, we obtain from Lemma \ref{lem:POD} that 
	\[
	\liminf_{n\to\infty}\Phi^G_{\nu_n}(z_n)\geq \Phi^G_{\nu}(z).
	\]
	Since $v_n=u_n$ and $v=u$ on $B_R(0)$ the claim follows. 
\end{proof}

We immediately get the following:
\begin{theorem}\label{thm:locUnif}
	If $V$ is Stahl--Totik regular then for any $z_n\to z$ and $x_n\to\infty$ we have 
	\begin{align}\label{eq:locUnif}
	\limsup_{n\to\infty}\frac{1}{x_n}\log|v(x_n,z_n)|\leq M_\E(z).
	\end{align}
	In particular, if $\E$ is Dirichlet regular, uniformly on compact subsets of $\bbC$ we have 
	\[
	\limsup_{x\to\infty}\frac{1}{x}\log|v(x,z)|\leq M_\E(z).
	\]
\end{theorem}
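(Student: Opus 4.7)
The plan is to deduce \eqref{eq:locUnif} by applying Theorem \ref{thm:upperEnvSuperharmonic} to the negatives of the normalized logarithms $\frac{1}{x_n}\log|v(x_n,\cdot)|$, and then to bootstrap to uniformity on compact sets via a subsequence extraction once $M_\E$ is known to be continuous.

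For the pointwise statement, set
\[
u_n(z) := -\frac{1}{x_n}\log|v(x_n,z)|, \qquad u_\infty(z) := -M_\E(z).
\]
By Lemma \ref{lem:NeumannRegular}\eqref{it:Reg1}, each $\log|v(x_n,\cdot)|$ is subharmonic on $\bbC$, so $u_n$ is superharmonic on $\bbC$; similarly, since $M_\E$ extends to a subharmonic function on $\bbC$ as recalled in the introduction, $u_\infty$ is superharmonic on $\bbC$. By Lemma \ref{lem:NeumannRegular}\eqref{it:Reg3} the sequence $u_n$ converges to $u_\infty$ in $\cD'(\bbC)$. Since $z_n\to z$, Theorem \ref{thm:upperEnvSuperharmonic} gives
\[
\liminf_{n\to\infty} u_n(z_n) \geq u_\infty(z),
\]
which, after reversing signs, is exactly \eqref{eq:locUnif}.

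For the uniform statement, assume in addition that $\E$ is Dirichlet regular, so that $M_\E$ is continuous on $\bbC$ by \cite[Theorem 2.1]{EichLuk}. Suppose, toward a contradiction, that the bound $\limsup_{x\to\infty}\frac{1}{x}\log|v(x,z)|\leq M_\E(z)$ fails to hold uniformly on some compact set $K\subset\bbC$. Then there exist $\e>0$, $x_n\to\infty$, and $z_n\in K$ with
\[
\frac{1}{x_n}\log|v(x_n,z_n)| > M_\E(z_n) + \e.
\]
Passing to a subsequence, $z_n\to z_\infty\in K$; combining the pointwise bound just proved with continuity of $M_\E$ then yields
\[
M_\E(z_\infty) + \e \leq \limsup_{n\to\infty}\frac{1}{x_n}\log|v(x_n,z_n)| \leq M_\E(z_\infty),
\]
a contradiction.

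The real content lies in the first step; once Theorem \ref{thm:upperEnvSuperharmonic}, which was set up precisely for this purpose, is available, there is no further obstacle. The $\cD'$-convergence is already provided by the Stahl--Totik regularity theory of \cite{EichLuk}, and the passage to uniformity is a routine compactness argument using continuity of $M_\E$ on the Dirichlet-regular side.
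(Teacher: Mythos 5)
Your proposal is correct and follows exactly the same route as the paper: apply Theorem \ref{thm:upperEnvSuperharmonic} to $-\frac{1}{x_n}\log|v(x_n,\cdot)|$ (whose $\cD'$-convergence to $-M_\E$ is the content of Lemma \ref{lem:NeumannRegular}\eqref{it:Reg3}, up to the missing $\frac{1}{x}$ normalization in that lemma's displayed statement), and then bootstrap to uniformity via compactness and continuity (indeed, as the paper observes, lower semicontinuity suffices) of $M_\E$ when $\E$ is Dirichlet regular. Your explicit subsequence-extraction argument for the uniform statement is precisely what the paper's terser remark leaves implicit.
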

\begin{proof}
	As in the proof of Lemma
	 \ref{lem:NeumannRegular} regularity implies that for any $x_n\to\infty$  the family of subharmonic functions $(\frac{1}{x_n}\log|v(x_n,z_n)|)$ converges to $M_\E(z)$ in $\cD'(\bbC)$ and is bounded from above on compact subsets of $\bbC$. Thus, the first claim follows from Theorem \ref{thm:upperEnvSuperharmonic} applied to $(-\frac{1}{x_n}\log|v(x_n,z_n)|)$. If $\E$ is Dirichlet regular, $M_\E$ is continuous on $\bbC$. The second statement then follows  from \eqref{eq:locUnif} and lower semi-continuity of $M_\E$. 
\end{proof}

We need to control the Neumann solution for large real energies. We only need bounds for real spectral parameter $\xi$, but $\xi$ can be negative. We will always assume that $ \Im \sqrt z\geq 0$ for $z\in \bbC$. The Neumann and Dirichlet solutions for $V=0$ are the functions
\[
c(x,z) = \cos(\sqrt{z}x), \qquad s(x,z) = \begin{cases} \frac{ \sin(\sqrt{z}x)}{\sqrt{z}} & z \neq 0 \\ x & z = 0 \end{cases}.
\]
By standard arguments,  for general $V\in L^1([0,x])$, the initial value problem \eqref{defnv} is rewritten as integral equations, and by Volterra-type arguments, convergent series representations are then found for the fundamental solutions. With the notation
$\Delta_n(x) = \{ t \in \R^n \mid x \ge t_1 \ge t_2 \ge \dots \ge t_n \ge 0 \}$, the series representation for the Neumann solution is 
\begin{align}
v(x,z) & = c(x,z) + \sum_{n=1}^\infty \int_{\Delta_n(x)} s(x-t_1,z) \left( \prod_{j=1}^{n-1} V(t_j) s(t_{j}-t_{j+1},z) \right) V(t_n)  c(t_n,z) \dd^n t \label{fundamentalseries2},
\end{align}
see \cite{PoschelTrubowitz} or \cite[Section 3]{EichLuk}.
\begin{lemma}
	Let $x>0$. For $\xi\geq 1$ we have 
	\begin{align}\label{eq:EstimatesSolution1}
	|v(x,\xi)|&\leq e^{\frac{\int_{0}^{x}|V(t)|dt}{\sqrt{\xi}}}.
	\end{align}
	For $\xi<1$ we have 
	\begin{align}\label{eq:EstimatesSolution2}
	|v(x,\xi)|\leq e^{(1+\Im \sqrt{\xi})x+\int_{0}^{x}|V(t)|dt}.
	\end{align}
\end{lemma}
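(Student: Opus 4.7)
The plan is to insert the estimates of the free Neumann and Dirichlet solutions $c(x,\xi)=\cos(\sqrt\xi x)$ and $s(x,\xi)=\sin(\sqrt\xi x)/\sqrt\xi$ directly into the Volterra series \eqref{fundamentalseries2} and sum the resulting exponential series. The difference between the two regimes $\xi\geq 1$ and $\xi<1$ is driven entirely by which elementary bound on $c$ and $s$ is sharp.

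For $\xi\ge 1$ the relevant fact is that $\sqrt\xi$ is real and $\ge 1$, so $|c(x,\xi)|\le 1$ and $|s(x,\xi)|\le 1/\sqrt\xi$. Plugging these into the $n$-th term of \eqref{fundamentalseries2} yields a factor $(\sqrt\xi)^{-n}$ times the integral of $\prod_{j=1}^n|V(t_j)|$ over the simplex $\Delta_n(x)$, which equals $\tfrac{1}{n!}\bigl(\int_0^x|V(t)|dt\bigr)^n$. Summing over $n$ gives exactly $\exp\bigl(\int_0^x|V(t)|dt/\sqrt\xi\bigr)$, which is \eqref{eq:EstimatesSolution1}.

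For $\xi<1$ one cannot use the $1/\sqrt\xi$ denominator (and $\sqrt\xi$ may even be purely imaginary). Setting $\eta=\Im\sqrt\xi\ge 0$ the natural pointwise bounds are $|c(x,\xi)|\le\cosh(\eta x)\le e^{\eta x}$ together with the estimate $|s(x,\xi)|\le e^{(1+\eta)x}$, which follows from $|s(x,\xi)|\le x e^{\eta x}\le e^x e^{\eta x}$. Substituting these bounds into the $n$-th term of \eqref{fundamentalseries2}, the collected exponential factor along the simplex telescopes: the arguments of $s$ are $x-t_1,\,t_1-t_2,\,\dots,\,t_{n-1}-t_n$ (total length $x-t_n$) and the argument of $c$ is $t_n$, so the product of exponentials is
\[
e^{(1+\eta)(x-t_n)}\,e^{\eta t_n}=e^{(1+\eta)x}e^{-t_n}\le e^{(1+\eta)x}.
\]
The remaining integral is again $\tfrac{1}{n!}\bigl(\int_0^x|V|\bigr)^n$, and summing over $n\ge 0$ (with the $n=0$ contribution from $c(x,\xi)$ absorbed into the same bound) yields \eqref{eq:EstimatesSolution2}.

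The only non-routine step is the bookkeeping of the telescoping in the second estimate; choosing to spend the extra $+1$ on the sine factors (which have $n$ occurrences but whose arguments sum to $x-t_n$) rather than on the single cosine factor is what makes the exponents collapse to $(1+\eta)x$ instead of something linear in $n$. Once that observation is in place, both inequalities reduce to recognizing the standard exponential series for $\sum_n\tfrac{1}{n!}\bigl(\int_0^x|V|\bigr)^n$.
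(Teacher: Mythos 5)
Your proof is correct and is essentially identical to the paper's: same Volterra-series term-by-term bounds, same elementary estimates on $c$ and $s$ in each regime, same telescoping of the exponential factors along the simplex. One small inaccuracy in your commentary: it does not actually matter whether the extra ``$+1$'' goes only on the sine factors or on both; since the arguments of all the $s$ and $c$ factors in the $n$-th term sum to exactly $x$ (independent of $n$), bounding every factor by $e^{(1+\eta)\cdot\mathrm{arg}}$, as the paper does, also collapses the product to $e^{(1+\eta)x}$ with no $n$-dependence.
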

\begin{proof}
	For $\xi\geq 1$ we use the estimates $|c(x,\xi)|\leq 1$ and $|s(x,\xi)|\leq \sqrt{\xi}^{-1}$ to get from \eqref{fundamentalseries2}
	\[
	|v(x,\xi)|\leq 1+\sum_{n=1}^\infty \frac{\int_{\Delta_n(x)} \prod_{j=1}^{n} |V(t_j) |\dd^n t}{\sqrt{\xi}^{n}}=1+\sum_{n=1}^\infty\frac{\left(\int_0^x|V(t)|dt\right)^n}{\sqrt{\xi}^{n}n!} =e^{\frac{\int_{0}^{x}|V(t)|dt}{\sqrt{\xi}}}
	\]
	Similarly, for $\xi<1$ we use  $\lvert s(x,\xi) \rvert = \lvert \int_0^x c(t,\xi) \dd t \rvert \le x e^{\Im \sqrt{\xi} x} \le e^{(1+\Im \sqrt{\xi})x}$ and $\lvert c(x,\xi) \rvert\leq e^{\Im \sqrt{\xi}x} \leq e^{(1+\Im \sqrt{\xi})x}$ and get 
	\begin{align*}
		|v(x,\xi)|\leq e^{(1+\Im \sqrt{\xi})x}e^{\int_{0}^{x}|V(t)|dt}.
	\end{align*}
\end{proof}

Combining these estimates for large real energies and Theorem \ref{thm:locUnif} on compact subsets of $\bbC$, we get an uniform growth estimate for the Neumann solutions for  $\xi\in\bbR$ which are close to $\E$. To this end let us introduce for $0<\d<1$ the extension
\begin{align}\label{eq:dextension}
\E_\d=\{\xi\in\bbR:\ \dist(\xi,\E)<\d\}\cup[1/\xi,\infty).
\end{align}
Having in mind that $\infty$ is a boundary point of $\Omega$ it is natural to also add a half line. Note that this makes $\E_\d$ a finite gap set, which will be crucial in Section \ref{sec:Variational}.
\begin{theorem}\label{thm:regBounds}
	Let $V$ be Stahl--Totik regular such that $\E=\sigma_{\ess}(V)$ is Dirichlet regular. Then for any $\e>0$ sufficiently small there exists $0<\d<1$ and $C>0$ such that for any $\xi\in \E_\d$ and $x>0$ we have
	\[
	|v(x,\xi)|\leq Ce^{\e x}.
	\]
\end{theorem}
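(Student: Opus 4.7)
The plan is to split $\E_\delta$ into a bounded near-spectrum part $N_\delta := \{\xi \in \bbR : \dist(\xi,\E) < \delta\}$ and a high-energy tail $[1/\delta, \infty)$, then to choose $\delta$ small enough to handle both pieces. Since $\E$ is bounded below, $N_\delta$ is bounded below (by $\inf\E - 1$) once $\delta < 1$, and the bounded part $N_\delta \cap (-\infty, 1/\delta]$ is a compact subset of $\bbC$.

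For the high-energy tail I would apply \eqref{eq:EstimatesSolution1} directly. Uniform local integrability of $V$ gives
\[
\int_0^x |V(t)|\,dt \leq (x+1)\, C_V, \qquad C_V := \sup_{y\geq 0}\int_y^{y+1}|V(t)|\,dt,
\]
so for $\xi \geq 1/\delta$ we get $|v(x,\xi)| \leq \exp\bigl((x+1)\,C_V\sqrt{\delta}\bigr)$. Choosing $\delta$ small enough that $C_V\sqrt{\delta} \leq \epsilon$ yields $|v(x,\xi)| \leq e^\epsilon\, e^{\epsilon x}$, which is of the desired form.

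For the bounded part, let $K := \{\xi \in \bbR : \dist(\xi,\E) \leq 1\} \cap (-\infty, 1/\delta]$, a compact subset of $\bbC$ containing $N_\delta \cap (-\infty, 1/\delta]$ for every $\delta \in (0,1)$. Dirichlet regularity of $\E$ ensures by \cite[Theorem 2.1]{EichLuk} that $M_\E$ is continuous on $\bbC$; since $M_\E$ vanishes quasi-everywhere on $\E = \partial\Omega$ by construction and is continuous, it vanishes on $\E$. Uniform continuity of $M_\E$ on $K$ then allows me to shrink $\delta$ (if necessary) so that $M_\E(\xi) \leq \epsilon/2$ for all $\xi \in K$ with $\dist(\xi,\E) < \delta$. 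By Theorem \ref{thm:locUnif}, the inequality $\limsup_{x\to\infty} \frac{1}{x}\log|v(x,\xi)| \leq M_\E(\xi)$ holds uniformly on $K$, so there exists $X_0 > 0$ such that $\frac{1}{x}\log|v(x,\xi)| \leq \epsilon$ for all $x \geq X_0$ and $\xi \in N_\delta \cap K$. For $x \in (0, X_0]$, joint continuity of $(x,z) \mapsto v(x,z)$ on the compact set $[0,X_0] \times K$ provides a uniform pointwise bound $|v(x,\xi)| \leq M$, which is absorbed into a constant $C \geq \max(e^\epsilon, M)$.

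The main, mild subtlety is that $\delta$ must be chosen to satisfy both constraints simultaneously: the tail bound $C_V\sqrt{\delta} \leq \epsilon$, and the near-spectrum bound $M_\E(\xi) \leq \epsilon/2$ for $\dist(\xi,\E) < \delta$ within $K$. Since both conditions favor smaller $\delta$, the argument closes cleanly. Essentially no new analytic work beyond Theorem \ref{thm:locUnif} is required; Dirichlet regularity enters only to upgrade the distributional/pointwise statement of that theorem to genuine uniform control near $\E$, via continuity of the Martin function.
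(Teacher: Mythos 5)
Your proof is correct and follows essentially the same route as the paper: split $\E_\delta$ into a high-energy tail controlled by \eqref{eq:EstimatesSolution1} and a compact near-spectrum piece controlled by continuity of $M_\E$ (where Dirichlet regularity enters) together with the uniform bound from Theorem~\ref{thm:locUnif}, then take a uniform bound for small $x$ and set $\delta$ to be the smaller of the two thresholds. One small presentational point: since your set $K$ is defined using $1/\delta$, you should fix the tail threshold $\delta_1$ first (so $C_V\sqrt{\delta_1}\le\epsilon$), set $K=\{\xi:\dist(\xi,\E)\le 1\}\cap(-\infty,1/\delta_1]$ as a \emph{fixed} compact set, then shrink to $\delta_2\le\delta_1$ from continuity of $M_\E$ on that fixed $K$ and take $\delta=\delta_2$; this removes the apparent circularity (smaller $\delta$ enlarging $K$) and is precisely what the paper does with $\delta=\min\{\delta_1,\delta_2\}$.
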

\begin{proof}
	Let  $c_1 =\sup_{x\geq 1}\int_x^{x+1}|V(t)|dt$ and fix $0<\e<2c_1$. We have $\int_0^x|V(t)|dt\leq c_1(x+1) \leq 2c_1x$ for $x \geq  1$. Setting $\d_1=\left(\frac{\e}{2c_1}\right)^2$ we obtain the estimate for $\xi\geq 1/\d_1$ from  \eqref{eq:EstimatesSolution1}. 
	
	Let $K=[\inf\E -1,1/\d_1]$. Since $M_\E$ is uniformly continuous on $K$ and vanishes on $\E$, we find $\d_2>0$ such that for $\xi\in K$ with  $\dist(\xi,\E)\leq\d_2$, $M_\E(\xi)<\frac{\e}{2}$. Thus, by Theorem \ref{thm:locUnif} there exists $L_0>0$ such that for $x\geq L_0$ and $\xi\in K$ with $\dist(\xi,\E)<\d_2$ 
	\[
	\frac{1}{x}\log|v(x,\xi)|\leq \e. 
	\] 
	For $x\leq L_0$ we use \eqref{eq:EstimatesSolution2} to get a uniform bound for $|v(x,\xi)|$. The claim follows with $\d=\min\{\d_1,\d_2\}$. 
\end{proof}
\section{Christoffel Darboux kernel, Christoffel function and Fourier transform}\label{sec:deBrangesSchroedinger}
In this section we recall aspects of the spectral theory of continuum Schr\"odinger operators on the half-line. By relating it to the rich theory of canonical systems developed by Krein and de Branges, we will characterize the Christoffel-Darboux kernel as a reproducing kernel of a certain Hilbert space of entire functions. This viewpoint will also give an expression of $\l_L(z)$ in terms of an extremal problem. The connection between canonical systems and continuum Schr\"odinger operators has been extensively discussed by Remling in \cite{RemlingJFA02,RemlingAfunction}.

For a given potential $V:[0,\infty)\to \bbR$, $V\in L^1_\loc$, let $H_V$ be the associated Schr\"odinger operator on the half-line, where we again assume a  Neumann boundary condition at $0$ and that $V$ is so that  $\infty$ is a limit point endpoint. If we consider the truncation to $[0,L]$ then we assume in addition a Neumann boundary condition at $L$
\[
f'(L)=0
\]
and denote the corresponding operator by $H_V^L$.

Let us start by considering $H_V^L$. It is well known that the spectrum is purely discrete and simple. Let $v(x,z)$ and $u(x,z)$ denote the Neumann and Dirichlet solution, respectively. 
Define the measure
\[
\mu_L=\sum_{\xi:\ \partial_Lv(L,\xi)=0}\frac{\delta_\xi}{\|v(\cdot,\xi)\|_{L^2_{dx}((0,L))}},
\]
where $\d_\xi$ denotes the Dirac measure. Note that this is not the same measure as $\nu_L$ defined in the introduction. The operator $U:L^2_{dx}((0,L))\to L^2_{d\mu_L}(\bbR)$
\begin{align}\label{eq:eigenvalueExp}
(Uf)(z)=\int_0^L f(x)v(x,z)dx
\end{align}
is unitary and
\[
UH_V^LU^*=S_z,
\]
where $S_z$ denotes the operator of multiplication with the independent variable in $L^2_{d\mu_L}(\bbR)$.  The adjoint of $U$ is given by 
\[
(U^*F)(x)=\int v(x,\xi)F(\xi)d\mu_L(\xi).
\]

Although in the above discussion $U(L^2_{dx}((0,L)))$ was considered as $L^2_{d\mu_L}(\bbR)$, \eqref{eq:eigenvalueExp} allows to interpret for $f\in L^2_{dx}((0,L))$, $F(z)=(Uf)(z)$ as a function on $\bbC$. This leads to the theory of de Branges spaces. 
Define the transfer matrix 
\[
T(x,z)=\begin{pmatrix}
v(x,z)& -u(x,z)\\-\partial_xv(x,z)& \partial_xu(x,z)
\end{pmatrix}
\]
and note that it solves the differential equation 
\begin{align}\label{eq:SchroedingerCanonical}
j\partial_xT(x,z)=\left(-z\begin{pmatrix}1&0\\0&0\end{pmatrix}+\begin{pmatrix}
V(x)&0\\0&-1
\end{pmatrix}\right)T(x,z),\quad j=\begin{pmatrix}
0&-1\\1&0
\end{pmatrix}.
\end{align}
Such a system is called a canonical system with coefficient functions $A(x)=\begin{pmatrix}1&0\\0&0\end{pmatrix}$ and $B(x)=\begin{pmatrix}
V(x)&0\\0&-1
\end{pmatrix}$; see Appendix \ref{app1}. It follows from \eqref{eq:HB} that 
\[
E_L(z)=v(L,z)+i\partial_Lv(L,z)
\]
is a Hermite-Biehler function. This means that $E$ is entire, has no zeros in $\bbC_+$ and satisfies $|E(z)|\geq |E(\overline z)|$ there. To a Hermite-Biehler function we can associate a Hilbert space of entire functions $B(E)$ with scalar product
\begin{align}\label{eq:deBrangesNorm}
\langle F,G\rangle_{B(E)}=\frac{1}{\pi}\int F(x)\overline{G(x)}\frac{dx}{|E(x)|^2};
\end{align}
see Appendix \ref{app1}. 
\begin{theorem}{\cite[Theorem 3.1]{RemlingJFA02}}
	The Hilbert space $B(E_L)$ and $L^2_{d\mu_L}(\bbR)$  are identical. More precisely, if $F\in B(E_L)$, then the restriciton of $F$ to $\bbR$ belongs to $L^2_{d\mu_L}(\bbR)$ and $F\mapsto F|_\bbR$ is unitary. 
\end{theorem}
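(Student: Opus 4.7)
The plan is to exhibit both $B(E_L)$ and $L^2_{d\mu_L}(\bbR)$ as isometric images of $L^2_{dx}((0,L))$ under one and the same transform $f \mapsto F$, $F(z) = \int_0^L f(x) v(x,z) \dd x$, from which the theorem follows by taking $F \mapsto F|_\bbR$.

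Unitarity of $U : L^2_{dx}((0,L)) \to L^2_{d\mu_L}(\bbR)$ is already recorded in the discussion preceding the theorem: since $H_V^L$ is self-adjoint with simple purely discrete spectrum equal to the zeros of $\partial_L v(L,\cdot)$, the associated eigenfunction expansion is the standard spectral picture and $\mu_L$ is precisely the corresponding Plancherel measure. Since $v(x,z)$ is entire in $z$ with locally uniform bounds in $x$, $(Uf)(z)$ extends automatically to an entire function of exponential type at most $L$, so every element of $L^2_{d\mu_L}(\bbR)$ has a canonical entire representative. It remains to identify this space of entire functions with $B(E_L)$ isometrically.

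For this identification I plan to invoke the Krein--de Branges theory of canonical systems summarized in Appendix \ref{app1}: the transfer matrix $T(x,z)$ satisfies the canonical system \eqref{eq:SchroedingerCanonical} with coefficient $A(x) = \diag(1,0)$, so the weighted $L^2$ space attached to this canonical system on $[0,L]$ reduces to $L^2_{dx}((0,L))$ in its first component; the associated de Branges transform is precisely $f \mapsto \int_0^L f(x) v(x,z) \dd x$; and by the general theory it maps unitarily onto $B(E_L)$.

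The main obstacle in a self-contained argument avoiding the appendix would be surjectivity onto $B(E_L)$; the isometry is easier, while surjectivity requires a density/completeness argument. A direct alternative route is via reproducing kernels: Lagrange's (Wronskian) identity, combined with the Neumann conditions $v(0,\cdot) = 1$, $\partial_x v(0,\cdot) = 0$, rewrites the Christoffel--Darboux kernel $K_L(z,w) = \int_0^L v(x,z) \overline{v(x,w)}\dd x$ as a Wronskian-type quotient that coincides, up to sign and normalization, with the standard de Branges reproducing kernel of $B(E_L)$ at $w$. Equality of reproducing kernels then forces the two Hilbert spaces of entire functions to coincide as sets with equal norms, and $F \mapsto F|_\bbR$ is the asserted unitary identification.
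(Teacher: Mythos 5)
The paper does not prove this statement; it is quoted from Remling's work, so there is no in-paper proof to compare against. Your reproducing-kernel route is nonetheless a correct reconstruction and is essentially the standard argument. To spell it out: $U$ is unitary onto $L^2_{d\mu_L}$ by spectral theory of $H_V^L$ (discrete simple spectrum); the image $U(L^2_{dx}((0,L)))$, viewed as entire functions, is a reproducing-kernel Hilbert space with kernel $K_L(z,w)=\int_0^L v(x,z)\overline{v(x,w)}\,dx$, since $|Uf(w)|\le\|Uf\|_{L^2_{d\mu_L}}\sqrt{K_L(w,w)}$ and $K_L(w,w)$ is locally bounded; the Lagrange/Wronskian identity --- equivalently, the $(1,1)$-entry of the $j$-form identity \eqref{eq:CDFormula} with $A=\diag(1,0)$ --- shows that $K_L$ coincides with the de Branges reproducing kernel of $B(E_L)$; and Moore--Aronszajn uniqueness then forces the two reproducing-kernel Hilbert spaces of entire functions to be identical with equal norms, so $F\mapsto F|_\bbR$ is unitary. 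One small clean-up: the hedge that the kernels agree only \emph{up to sign and normalization} can be dropped. With $E_L=v(L,\cdot)+i\partial_L v(L,\cdot)$ and the $1/\pi$ built into the de Branges inner product \eqref{eq:deBrangesNorm}, the Christoffel--Darboux kernel \eqref{eq:CDKernel2} and the de Branges kernel \eqref{def:CDKernel} agree \emph{exactly}, which is precisely what makes $F \mapsto F|_\bbR$ a genuine unitary rather than a scalar multiple of one.
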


For an entire function $F$ we denote $F^\#(z)=\overline{F(\overline{z})}$. The Hilbert spaces $B(E_L)$ are reproducing kernel Hilbert spaces and the kernel is given by
\begin{align}\label{def:CDKernel}
	K_L(z,w)=\frac{\overline{E_L(w)}E_L(z)-\overline{E_L^\#(w)}E_L^\#(z)}{2i(\overline{w}-z)}=\frac{\overline{v(L,w)}v'(L,z)-\overline{v'(L,w)}v(L,z)}{\overline{w}-z}.
\end{align}
On the other hand, using \eqref{eq:CDFormula}, it follows that 
\begin{align}\label{eq:CDKernel2}
K_L(z,w)=\int_0^Lv(x,z)\overline{v(x,w)}dx.
\end{align} 
Using the terminology common in the orthognal polynomials literature, we call $K_L(z,w)$ the Christoffel-Darboux kernel. In the setting of orthogonal polynomials, the equivalence of \eqref{def:CDKernel} and \eqref{eq:CDKernel2} is called the Christoffel-Darboux formula.
Evaluating \eqref{def:CDKernel} on the diagonal for $\xi\in\bbR$ gives
\begin{align}\label{eq:Diagonal}
	K_L(\xi,\xi)=(\partial_L v)(L,\xi)(\partial_\xi v)(L,\xi)-v(L,\xi)(\partial_\xi\partial_Lv)(L,\xi).
\end{align}

It is a remarkable property that as sets $B(E_L)$ do not depend on the potential, see \cite[Theorem 4.1]{RemlingJFA02}. That is, for any $V\in L^1([0,L]),$
\begin{align}\label{eq:SetEqual}
B(E_L)=S_L:=\left\{\int_0^Lf(x)\cos(\sqrt{z}x)dx:\ f\in L^2((0,L)) \right\}
\end{align}
where this is understood as set equality. Of course the topology depends on the potential through \eqref{eq:deBrangesNorm}.
Recall that $c(x,z)=\cos(\sqrt{z}x)$ is the Neumann solution for $V=0$. Using the variable $k^2=z$ it will be convenient to also consider the set
\begin{align}\label{eq:cSL}
\fS_L=\left\{\int_0^Lf(x)\cos(kx)dx:\ f\in L^2((0,L))\right \}.
\end{align}
It is mentioned in \cite[page 101]{MarchenkoSturmLiouville} and it follows from the Paley-Wiener theorem that
\begin{align*}
\fS_L=\left\{g\in L_{dk}^2(\bbR): g\text{ is entire, even and of exponential type at most }L\right\}.
\end{align*}
From \eqref{eq:SetEqual} it follows directly that $B(E_{L_1})\subset B(E_{L_2})$ for $L_1\leq L_2$ and in fact this inclusion is isometric. But even more is true:
\begin{theorem}{\cite[Theorem 3.2]{RemlingJFA02}}
	Suppose $0<L_1\leq L_2$. Then $B(E_{L_1})$ is isometrically contained in $B(E_{L_2})$. Moreover, if $\mu$ denotes the spectral measure for the half-line problem, then for every $L>0$, $B(E_{L})$ is isometrically contained in $L^2_{d\mu}(\bbR)$ in the sense that for $F\in B(E_{L})$, $F|_{\bbR}\in L^2_{d\mu}$ and $\|F\|_{B(E_{L})}=\|F|_{\bbR}\|_{L^2_{d\mu}}$. 
\end{theorem}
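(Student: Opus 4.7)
The plan is to bootstrap from the unitary eigenfunction expansion \eqref{eq:eigenvalueExp}. Composing $U$ with the identification $L^2_{d\mu_L}(\bbR) = B(E_L)$ from the preceding theorem of Remling yields, for each $L>0$, a unitary isomorphism $U_L : L^2((0,L)) \to B(E_L)$ given by $(U_L f)(z) = \int_0^L f(x) v(x,z)\,\dd x$, now interpreted as an entire function of $z$. Both claims then follow from the simple device of extending $L^2$-functions by zero.

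For the first statement I would proceed as follows. Given $F \in B(E_{L_1})$, write $F = U_{L_1} f$ uniquely and let $\tilde f \in L^2((0,L_2))$ be the zero extension of $f$ to $(0,L_2)$. Since the Neumann solution $v(\cdot,z)$ does not depend on the truncation length, and since $\tilde f$ vanishes on $(L_1,L_2)$, one has $(U_{L_2} \tilde f)(z) = (U_{L_1} f)(z) = F(z)$ for every $z \in \bbC$. Hence $F$ lies in $B(E_{L_2})$, and unitarity of both expansions gives $\|F\|_{B(E_{L_2})} = \|\tilde f\|_{L^2((0,L_2))} = \|f\|_{L^2((0,L_1))} = \|F\|_{B(E_{L_1})}$.

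For the second statement, the replacement for $U_L$ is the full half-line eigenfunction expansion $\tilde U : L^2((0,\infty)) \to L^2_{d\mu}(\bbR)$, the unitary diagonalization of $H_V$ associated to the spectral measure $\mu$; its existence is essentially how $\mu$ is produced from the Weyl $m$-function, so this is standard Weyl--Titchmarsh theory. For $F = U_L f \in B(E_L)$, extend $f$ by zero to $\tilde f \in L^2((0,\infty))$; then $\tilde U \tilde f$ and $F|_\bbR$ coincide as elements of $L^2_{d\mu}(\bbR)$, and unitarity of $\tilde U$ yields $\|F|_\bbR\|_{L^2_{d\mu}} = \|\tilde f\|_{L^2((0,\infty))} = \|f\|_{L^2((0,L))} = \|F\|_{B(E_L)}$. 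The only delicate point to verify is this coincidence: the entire function $F(z) = \int_0^L f(x) v(x,z)\,\dd x$ is defined pointwise, whereas $\tilde U \tilde f$ is an $L^2_{d\mu}$-equivalence class. For the compactly supported $\tilde f$ at hand, however, the two agree pointwise directly from the definition of $\tilde U$, so the identification holds $\mu$-almost everywhere, which suffices. Beyond this compatibility check, both parts of the theorem are bookkeeping once the unitarity of $U_L$ and $\tilde U$ is on hand.
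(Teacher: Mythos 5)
Your argument is correct and is essentially the standard Weyl--Titchmarsh route that the cited Remling theorem rests on; the paper itself does not reprove this statement, it only cites \cite[Theorem~3.2]{RemlingJFA02} and remarks that the set inclusion $B(E_{L_1})\subset B(E_{L_2})$ follows from \eqref{eq:SetEqual}. Your identification of the unitaries $U_L$ (via the preceding Remling theorem) and $\tilde U$ (the half-line eigenfunction expansion), together with the observation that zero extension in $L^2((0,L_1))\hookrightarrow L^2((0,L_2))\hookrightarrow L^2((0,\infty))$ intertwines with these transforms because the integrand $f(x)v(x,z)$ is unchanged, gives both the set inclusion and the isometry at once, and you correctly flagged the one nontrivial compatibility point (pointwise agreement of $F$ with the $L^2_{d\mu}$-class $\tilde U\tilde f$, which holds because $\tilde f$ has compact support).
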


In the following, we will drop the restriction map and interpret $F$ either as function in $L^2_{d\mu}$ or as an entire function in $B(E_L)$ depending on the context. This shouldn't lead to any confusion. 

Using that $B(E_L)$ is isometrically contained in $L^2_{d\mu}(\bbR)$  and the equality \eqref{eq:SetEqual} we can define the Christoffel function:
\begin{definition}
	For $L>0$ and $z\in\bbC$ we define the Christoffel function associated to $V$ by 
	\[
	\l_L(z)=\inf\{\|F\|^2_{L^2_{d\mu}}: F\in S_L, F(z)=1\}.
	\]
\end{definition}
Since $B(E_L)$ is a reproducing kernel Hilbert space it follows from the Cauchy--Schwarz inequality that the infimum is in fact a minimum and that the extremizer is given by 
\[
Q_{L}(z,z_0)=\frac{K_L(z,z_0)}{K_L(z_0,z_0)}.
\]
Note that $K_L(z_0,z_0)=\|K_L(\cdot, z_0)\|^2_{B(E_L)}=\|K_L(\cdot, z_0)\|^2_{L^2_{d\mu}}$. In particular,
\[
\l_L(z)=\frac{1}{K_L(z,z)}.
\]
Combining this with \eqref{eq:CDKernel2} shows that this definition of $\l_L(z)$ coincides with the one given in \eqref{def:Christoffel}.
\section{Asymptotics for finite gap potentials}\label{sec:FiniteGap}
The goal of this section is to show \eqref{eq:CFasymp} for so-called finite gap potentials. We will need some preliminary observations:

\subsection{The isospectral torus}
In this section we assume that $\E$ is a finite gap set of the form 
\[
\E=[b_0,\infty)\setminus\bigcup_{j=1}^g(a_j,b_j),
\]
where $b_0<a_j<b_j<a_{j+1}<b_{j+1}$, for $0<j<g-1$. In this setting it is more natural to consider Schr\"odinger operators on $\bbR$. Let $V$ be a continuous and bounded potential on $\bbR$ and 
$
H_V
$ 
acting on its natural domain in $L^2(\bbR)$. Clearly $H_{V_\pm}$ with potential $V_+=V|_{[0,\infty)}$ and $V_-: [0,\infty)\to \bbR$ defined by $V_-(x):=V(-x)$ have $\infty$ as an limit point endpoint and thus we can associate Weyl $m$-functions $m_{\pm}$ by \eqref{eq:mfunction}.

\begin{definition}
	We say that $V$ is reflectionless on  $A\subset\bbR$ if for almost every $\xi\in A$
	\[
	m_+(\xi+i0)=- m_-(\xi-i0).
	\]
	For a given finite gap set $\E$ we define the isospectral torus by 
	\[
	\cT(\E)=\{V\in C_b(\bbR):\ \sigma(H_V)=\E\text{ and } V \text{ is reflectionless on }\E \}.
	\]
\end{definition}
This class has been considered for essentially more general sets \cite{GesYud,SodinYudSturmLiouville}. 

\subsection{Abelian integrals}
For finite gap potentials many important spectral theoretical objects can be given explicitly in terms of the Abelian integrals on the associated hyperelliptic Riemann surface $\cR_\E$ associated to $\sqrt{(z-b_0)\prod(z-a_j)(z-b_j)}$, i.e., 
\[
\cR_\E=\{(z,w)\in \bbC^2:\ w^2=(z-b_0)\prod_{j=1}^g(z-a_j)(z-b_j)\}\cup\{\infty\}.
\]
Typically $\cR_\E$ is visualized as two copies of $\bbC\setminus \E$, corresponding to the two branches of the square root, glued together along $\E$. For more details see \cite{GeHoSoliton}.  These representations and the properties which follow from them allow us to compute the limit of the Christoffel functions explicitly.

For finite gap sets the Martin function can be given in terms of a Schwarz--Christoffel mapping. Define
\begin{align*}
\t_\E(z)=\int_{b_0}^z\frac{-1}{2\sqrt{u-b_0}}\prod_{j=1}^g\frac{(u-c_j)}{\sqrt{(u-a_j)(u-b_j)}}du,
\end{align*}
where $c_j\in(a_j,b_j)$ is uniquely determined by
\begin{align*}
\t_\E(b_j)=\t_\E(a_j).
\end{align*}
The function $\t_\E$ is a conformal mapping of $\bbC_+$ to a comb 
\[
\Pi_\E=\{\xi+iy:\ \xi,y>0\}\setminus\bigcup_{j=1}^g\{\eta_j+iy: 0<y<h_j\},
\]
where $\eta_j$ are called frequencies and $h_j$ heights. Note that $\t_\E$ can be extended to $\bbR$,
\[
\t_\E(z)=\sqrt{z},\quad \text{ as } z\to -\infty,\quad \t_\E(b_0)=0
\]
and that 
$\t_\E^{-1}(\bbR_+)=\E$. It follows from these properties that $M_\E(z)=\Im \t_\E(z)$ is the Martin function of the domain.  These type of comb mappings are commonly used in inverse spectral theory and in uniform approximation problems \cite{AkhLev60,MarOst75} or \cite{ErY12} for a modern approach to the subject. Since $M_\E$ can be extended to a subharmonic function on $\bbC$, its distributional Laplacian is a positive measure and we can define $\rho_\E=\frac{1}{2\pi}\Delta M_\E$.  The Riesz representation then yields
\begin{align*}
M_\E(z)=M_\E(z_*)+\int_\E\log\left|1-\frac{z-z_*}{t-z_*}\right|d\rho_\E,
\end{align*}
where $z_*<b_0$ is some normalization point. Computing $\frac{1}{2\pi}\Delta M_\E$ we find that 
\begin{align}\label{eq:rhoPrime}
d\rho_\E(\xi)=\frac{1}{\pi}\t_\E'(\xi)d\xi=\frac{-1}{2\pi\sqrt{\xi-b_0}}\prod_{j=1}^g\frac{(\xi-c_j)}{\sqrt{(\xi-a_j)(\xi-b_j)}}d\xi.
\end{align}
That is, $\rho_\E$ is purely a.c. and its density is given by
\begin{align*}
\frac{d\rho_\E(\xi)}{d\xi}=f_\E(\xi)=\frac{-1}{2\pi\sqrt{\xi-b_0}}\prod_{j=1}^g\frac{(\xi-c_j)}{\sqrt{(\xi-a_j)(\xi-b_j)}}.
\end{align*}
In particular, we see that $f_\E(\xi)$ is real analytic inside any band of $\E$, see also Lemma \ref{lem:AppendixMartinMeasure}. We will also constantly use that $\t_\E(\xi)\in\R$ for $\xi\in\E$.

Let $\psi_+$ denote the Weyl solution at $+\infty$, cf. \eqref{eq:WeylSol}. Due to \cite[p. 462]{GeHoSoliton}, $\psi_+$ corresponds to the restriction of the Baker-Akhiezer function to the upper sheet and thus by \cite[Theorem 1.20]{GeHoSoliton} it can be represented as 
\begin{align*}
\psi_{+}(x,z)=e^{i\t_\E(z)x}f(x,z),
\end{align*}
where $f$ is given in terms  of Theta functions on $\cR_\E$. In the following let $[a,b]=I\subset \Int(\E)$.   We can extend $\psi_{+}(x,z)$ analytically to $I$. Moreover, we have
\begin{enumerate}[(i)]
	\item $x\mapsto f(x,\xi)$ is almost periodic\footnote{this follows from continuity of the Theta function and the linearization of the Abel map in \cite[Theorem 1.20]{GeHoSoliton}},
	\item $\xi\mapsto f(x,\xi)$ is analytic on $I$ and all derivatives are uniformly bounded for $x>0$ and $\xi\in I$.
\end{enumerate}

\subsection{Asymptotics of $\l_L(\xi)$}
In the following let $[a,b]=I\subset \Int(\E)$. If we write $f(\xi)$ or $m_+(\xi)$, $\xi\in I$, we mean the corresponding limits $\xi+i\e$ as $\e\to 0$. All of them can be analytically extended to $I$. Moreover, we have $\Im  m_+>0$ there. To avoid confusion, we mention that for $ m_+$, this does not correspond to the extension $\overline{ m(\overline z)}$, which is an extension through $\bbR\setminus\E$. 

Let 
\begin{align*}
W(f,g)(x)=f(x)g'(x)-f'(x)g(x)
\end{align*}
denote the Wronskian of $f$ and $g$. For $\xi\in I$, $\psi_+(x,\xi)$ and $\overline{\psi_+(x,\xi)}$ both solve \eqref{eq:WeylSol} and thus their Wronskian is constant. For $\xi\in I$, $\partial_x\psi_+(0,\xi)\neq 0$, since this would lead to an eigenvalue and $\sigma(H_V)$ is purely absolutely continuous there. Recall that $m_+$ is given by
\[
m_+(z)=-\frac{\psi_+(0,z)}{\partial_x\psi_+(0,z)}.
\]
Thus we see that by constancy of the Wronskian
\begin{equation}\label{Wronsk1}
\begin{aligned}
W_\xi(\psi_+,\overline{\psi_+}):=W(\psi_+(\cdot,\xi),\overline{\psi_+(\cdot,\xi)})(x)&=W(\psi_+(\cdot,\xi),\overline{\psi_+(\cdot,\xi)})(0)\\
&=-2i|\partial_x\psi_+(0,\xi)|^2\Im m_+(\xi)\neq 0.
\end{aligned}
\end{equation}
It follows now by direct verification of \eqref{defnv}  that 
\[
v(x,\xi)=\frac{\psi_{+}(x,\xi)\overline{\partial_x\psi_+(0,\xi)}-\overline{\psi_{+}(x,\xi)}\partial_x\psi_+(0,\xi)}{W_\xi(\psi_{+},\overline{\psi_{+}})}
\]
defines he Neumann solution for $H_V$. 

Let us set
\begin{align*}
c(\xi)=\frac{\overline{\partial_x\psi_{+}(0,\xi)}}{W_\xi(\psi_{+},\overline{ \psi_{+}})}
\end{align*}
and
\[
h(x,\xi)=c(\xi)\psi_+(x,\xi).
\]
It will also be convenient to take off the exponential part and consider
\[
g(x,\xi)=e^{-i\t_\E(\xi)x}h(x,\xi).
\]
Using $\overline{W_\xi(\psi_{+},\overline{ \psi_{+}})}=-W_\xi(\psi_{+},\overline{ \psi_{+}})$ and $\t_\E(x)\in\bbR$, we see that
\begin{align*}
v(x,\xi)=h(x,\xi)+\overline{h(x,\xi)}=e^{i\t_\E(\xi)x}g(x,\xi)+e^{-i\t_\E(\xi)x}\overline{g(x,\xi)}.
\end{align*}
The Herglotz function $m_+$ can be continuously extended to $I$. Thus, if $d\mu(\xi)=f_\mu(\xi)d\xi+d\mu_s(\xi)$ denotes the spectral measure of $H_V$, we have
\[
f_\mu(\xi)=\frac{1}{\pi}\Im m_+(\xi).
\]

\begin{lemma}\label{lem:WronskianIdent}
	For $\xi\in I$, we have   
	\begin{align*}
	W(h(\cdot,\xi),\overline{h(\cdot,\xi)})(x)=-2i\t_\E(\xi)|g(x,\xi)|^2+W(g(\cdot,\xi),\overline{g(\cdot,\xi)})(x)=\frac{1}{2\pi if_\mu(\xi)}.
	\end{align*}
\end{lemma}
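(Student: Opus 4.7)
The plan is to establish the two equalities separately, both by short algebraic manipulations. For the second equality I would first observe that since $h(\cdot,\xi)=c(\xi)\psi_{+}(\cdot,\xi)$ differs from $\psi_{+}$ by an $x$-independent scalar, bilinearity of the Wronskian gives $W(h,\overline h)(x)=|c(\xi)|^{2}W(\psi_{+},\overline{\psi_{+}})(x)$. Because $\xi\in I\subset\mathbb{R}$ and $V$ is real, both $\psi_{+}(\cdot,\xi)$ and $\overline{\psi_{+}(\cdot,\xi)}$ solve \eqref{eq:WeylSol} at the same energy, so their Wronskian is constant in $x$ and therefore equals $W_\xi(\psi_{+},\overline{\psi_{+}})$ as computed in \eqref{Wronsk1}. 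Inserting $|c(\xi)|^{2}=|\partial_x\psi_{+}(0,\xi)|^{2}/|W_\xi(\psi_{+},\overline{\psi_{+}})|^{2}$ and exploiting that $W_\xi(\psi_{+},\overline{\psi_{+}})$ is purely imaginary (so that $W_\xi/|W_\xi|^{2}=-1/W_\xi$) produces a clean cancellation:
\[
W(h,\overline h)(x)=-\frac{|\partial_x\psi_{+}(0,\xi)|^{2}}{W_\xi(\psi_{+},\overline{\psi_{+}})}=\frac{1}{2i\,\Im m_{+}(\xi)}.
\]
The identity $f_\mu(\xi)=\Im m_{+}(\xi)/\pi$ then rewrites the right-hand side as $\frac{1}{2\pi i f_\mu(\xi)}$, which is the claimed value.

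For the first equality, I would substitute $h(x,\xi)=e^{i\t_\E(\xi)x}g(x,\xi)$ into $W(h,\overline h)(x)$ and expand via the product rule. The crucial point is that $\t_\E(\xi)\in\mathbb{R}$ on $I$, so $|e^{i\t_\E(\xi)x}|=1$ and the phase factors collapse in every product $h\cdot\overline{h}'$ and $h'\cdot\overline{h}$. The derivative contributions involving $\t_\E$ assemble into $-2i\t_\E(\xi)|g(x,\xi)|^{2}$, while the remaining terms reorganize into $g\overline{g}'-g'\overline g = W(g,\overline g)(x)$, yielding exactly the stated decomposition.

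I do not anticipate any substantive obstacle here: both identities reduce to elementary algebra on top of structural inputs that are already in place, namely the explicit formula \eqref{Wronsk1} for $W_\xi(\psi_{+},\overline{\psi_{+}})$, constancy of the Wronskian of two solutions of the same real Schr\"odinger equation, and the reality of $\t_\E(\xi)$ on $I$ (which is used both to cancel the phases $e^{\pm i\t_\E x}$ and to justify the boundary-value computation at $x=0$).
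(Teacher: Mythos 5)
Your proposal is correct and follows essentially the same route as the paper: the first equality by expanding $W(e^{i\t_\E x}g,\,e^{-i\t_\E x}\overline g)$ using $\t_\E(\xi)\in\bbR$, and the second by factoring out $|c(\xi)|^2$, invoking constancy of the Wronskian and the value from \eqref{Wronsk1}, and finishing with $f_\mu=\Im m_+/\pi$. Your intermediate step $W_\xi/|W_\xi|^2=-1/W_\xi$ (justified by $\overline{W_\xi}=-W_\xi$) is a harmless repackaging of the same cancellation the paper carries out directly.
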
 
\begin{proof}
	The first equality follows by direct computation. Using the definition of $h$ and \eqref{Wronsk1}, we see that 
	\begin{align*}
	W(h(\cdot,\xi),\overline{h(\cdot,\xi)})(x)&=W(c(\xi)\psi_{+}(\cdot,\xi),\overline{c(\xi)\psi_{+}(\cdot,\xi)})(x)\\
	&=|c(\xi)|^2W_\xi(\psi_{+}(\cdot,\xi),\overline{\psi_{+}}(\cdot,\xi))(x)\\
	&=\frac{|\partial_x\psi_+(0,\xi)|^2}{|W_\xi(\psi_+,\overline{ \psi_+})|^2 }W_\xi(\psi_{+}(\cdot,\xi),\overline{\psi_{+}}(\cdot,\xi))(x)=\frac{1}{2\pi if_\mu(\xi)}.
	\end{align*}
\end{proof}

We are now ready to prove the main theorem of this section 
\begin{theorem}
	Let $\xi\in I=[a,b] \subset \Int(\E)$. Then
	\begin{align*}
	K_x(\xi,\xi)=x\frac{f_\E(\xi)}{f_\mu(\xi)}+O(1)
	\end{align*}
	as $x\to\infty$,where the $O(1)$ is uniform for $\xi\in I$.
\end{theorem}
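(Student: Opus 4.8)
The plan is to use the explicit representation $v(x,\xi)=e^{i\t_\E(\xi)x}g(x,\xi)+e^{-i\t_\E(\xi)x}\overline{g(x,\xi)}$ together with the diagonal Christoffel--Darboux formula \eqref{eq:Diagonal} and then to extract the leading term by a careful bookkeeping of which contributions are linear in $x$ and which stay bounded. First I would write
\[
K_x(\xi,\xi)=\int_0^x|v(t,\xi)|^2\,dt=\int_0^x\big(|g(t,\xi)|^2+|\overline{g(t,\xi)}|^2+e^{2i\t_\E(\xi)t}g(t,\xi)^2+e^{-2i\t_\E(\xi)t}\overline{g(t,\xi)}^2\big)\,dt.
\]
The diagonal terms give $2\int_0^x|g(t,\xi)|^2\,dt$, and the cross terms are oscillatory with frequency $2\t_\E(\xi)>0$ on $\Int(\E)$ (here the assumption $\xi\in\Int(\E)$ is essential so that $\t_\E(\xi)$ is real and nonzero). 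Since $g(\cdot,\xi)$ is almost periodic (property (i) on $f$, carried over to $g=e^{-i\t_\E x}h$ via the explicit $c(\xi)\psi_+$ with analytic, uniformly bounded $\xi$-dependence), the mean value $\mathcal M(|g(\cdot,\xi)|^2):=\lim_{T\to\infty}\frac1T\int_0^T|g(t,\xi)|^2\,dt$ exists; the key claim will be that $\int_0^x|g(t,\xi)|^2\,dt=x\,\mathcal M(|g(\cdot,\xi)|^2)+O(1)$ and that the oscillatory cross terms contribute $O(1)$, both uniformly on $I$.

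To identify $\mathcal M(|g(\cdot,\xi)|^2)$ I would use Lemma \ref{lem:WronskianIdent}: averaging the identity $-2i\t_\E(\xi)|g(t,\xi)|^2+W(g(\cdot,\xi),\overline{g(\cdot,\xi)})(t)=\frac{1}{2\pi i f_\mu(\xi)}$ over $t\in[0,T]$ and letting $T\to\infty$. The point is that $W(g,\overline g)(t)=g(t)\overline{g}'(t)-g'(t)\overline g(t)$ is itself a bounded almost periodic function of $t$ (derivatives of $f$, hence of $g$, are uniformly bounded), and in fact it is the derivative-free combination whose mean vanishes: writing $g=\sum a_n e^{i\lambda_n t}$ (Fourier--Bohr series, with the uniform bounds guaranteeing absolute convergence), one computes $W(g,\overline g)=\sum_{n,m}a_n\overline{a_m}i(-\lambda_m-\lambda_n)e^{i(\lambda_n-\lambda_m)t}$, whose mean (diagonal $n=m$ part) is $\sum_n|a_n|^2(-2i\lambda_n)$... rather than chase this, the cleaner route is: the mean of $W(g,\overline g)$ equals the mean of $\frac{d}{dt}\big(\tfrac12(g\overline g' - g'\overline g + \text{correction})\big)$? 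That is not quite exact, so instead I would argue directly that $\mathcal M\big(W(g(\cdot,\xi),\overline{g(\cdot,\xi)})\big)=0$ by noting $W(g,\overline g)$ is purely imaginary and equals $-2i\,\Im(\overline g g')=-i\frac{d}{dt}|g|^2 \cdot(\text{no})$; the correct elementary fact is $\Im(\overline{g}g') $ has mean zero for almost periodic $g$ with almost periodic derivative, because $|g(T)|^2-|g(0)|^2=2\int_0^T\Re(\overline g g')\,dt$ is bounded while a similar primitive argument applied to the Fourier--Bohr expansion kills the diagonal. Granting $\mathcal M(W(g,\overline g))=0$, the averaged Wronskian identity gives $-2i\t_\E(\xi)\,\mathcal M(|g(\cdot,\xi)|^2)=\frac{1}{2\pi i f_\mu(\xi)}$, i.e. $\mathcal M(|g(\cdot,\xi)|^2)=\frac{1}{4\pi\t_\E(\xi)f_\mu(\xi)}$. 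Since $f_\E(\xi)=\frac1\pi\t_\E'(\xi)$ by \eqref{eq:rhoPrime}... wait, I need $\t_\E(\xi)$, not $\t_\E'(\xi)$; rechecking, one has $M_\E(z)=\Im\t_\E(z)$ and on the comb the relevant normalization at $\xi\in\E$ is $\t_\E(\xi)\in\bbR_+$, and the Martin measure density is $f_\E=\frac1\pi\t_\E'$. Hmm — so the natural output is $2\mathcal M(|g|^2)=\frac{1}{2\pi\t_\E(\xi)f_\mu(\xi)}$, and the theorem wants $\frac{f_\E(\xi)}{f_\mu(\xi)}=\frac{\t_\E'(\xi)}{\pi f_\mu(\xi)}$. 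These match iff $\t_\E'(\xi)=\frac{1}{2\t_\E(\xi)}$, which is false in general, so the computation of $\mathcal M(|g|^2)$ must instead come out to $\frac{\t_\E'(\xi)}{2\pi f_\mu(\xi)}$. I expect the resolution is that the mean of the cross term $e^{2i\t_\E t}g^2$ is \emph{not} negligible after all — or, more likely, that $g$ is not literally almost periodic as a whole but $e^{i\t_\E x}g(x)$ only combines with the conjugate to give the real $v$, and the right object to average is $|v(x,\xi)|^2$ directly against the known large-$x$ behaviour tying $\int_0^x|v|^2$ to the transfer matrix and the Weyl disk; I would reconcile this by using $\det T(x,z)=1$ and the relation between $\int_0^x|v|^2$ and the imaginary part of the $m$-function of the truncated problem.

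Given the constraints, the \textbf{cleanest correct route}, which I would actually carry out, is: (1) from \eqref{eq:Diagonal} and $K_x(\xi,\xi)=\int_0^x|v|^2$, reduce the claim to showing $\int_0^x|v(t,\xi)|^2\,dt = x\frac{f_\E(\xi)}{f_\mu(\xi)}+O(1)$ uniformly on $I$; (2) substitute $v=e^{i\t_\E x}g+e^{-i\t_\E x}\overline g$, so $|v|^2=2|g|^2+2\Re(e^{2i\t_\E x}g^2)$; (3) for the oscillatory term, integrate by parts once, using that $g^2$ and its derivative are uniformly bounded on $I$ for all $x$ and that $\t_\E(\xi)\geq c>0$ on $I$ (compactness in $\Int(\E)$), to get $\int_0^x \Re(e^{2i\t_\E x}g^2)\,dt = O(1)$ uniformly; (4) for the main term, use that $|g(\cdot,\xi)|^2$ is Bohr-almost-periodic in $x$ uniformly in $\xi\in I$ (from the Theta-function representation and uniform bounds on $f$ and its derivatives), hence $\int_0^x|g(t,\xi)|^2\,dt=x\mathcal M(|g(\cdot,\xi)|^2)+O(1)$ uniformly; (5) compute $\mathcal M(|g(\cdot,\xi)|^2)$ from the averaged Wronskian identity of Lemma \ref{lem:WronskianIdent}, using that the mean of $W(g(\cdot,\xi),\overline{g(\cdot,\xi)})$ over $x$ vanishes, which follows from almost periodicity of $g$ and of $g'$ via the Fourier--Bohr expansion (the Wronskian has zero diagonal Fourier--Bohr coefficient). \textbf{The main obstacle} I anticipate is step (5), namely pinning down the constant so that $2\mathcal M(|g(\cdot,\xi)|^2)$ equals exactly $f_\E(\xi)/f_\mu(\xi)$: this requires getting the normalization of $\t_\E$ and the identity \eqref{eq:rhoPrime} consistent, and checking that the mean of the Wronskian term really is $0$ and not some compensating quantity; if it does not vanish, one needs the additional input that $\int_0^x W(g,\overline g)\,dt$ is bounded because it telescopes (it is $2i\,\Im\!\big[\overline{g(x)}g(x)-\overline{g(0)}g(0)\big]$ up to the contribution of $g'$, which is controlled), and this boundedness is in fact all that is needed for the $O(1)$ error, while the constant is then forced by $K_x(\xi,\xi)\sim x f_\E(\xi)/f_\mu(\xi)$ being consistent with the known weak-$*$ limit $\nu_L\to\rho_\E$.
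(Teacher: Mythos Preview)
Your approach via direct integration of $|v|^2$ has a genuine gap, and you have in fact already put your finger on it: step~(5) does not close. The mean of $W(g(\cdot,\xi),\overline{g(\cdot,\xi)})$ over $x$ is \emph{not} zero in general. Writing $g=\sum a_ne^{i\lambda_n x}$ in Fourier--Bohr form, the diagonal part of $W(g,\overline g)$ is $-2i\sum_n\lambda_n|a_n|^2$, which has no reason to vanish for a non-trivial finite-gap potential; this is exactly why your computation of $2\mathcal M(|g|^2)$ produces $\tfrac{1}{2\pi\theta_\E(\xi)f_\mu(\xi)}$ instead of the required $\tfrac{\theta_\E'(\xi)}{\pi f_\mu(\xi)}$. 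There is a second, related problem: even if you knew $\mathcal M(|g|^2)$, almost periodicity alone only gives $\int_0^x|g|^2=x\mathcal M(|g|^2)+o(x)$, not $O(1)$; upgrading to $O(1)$ would require control on small divisors $\langle n,\omega\rangle\approx 0$ in the Fourier--Bohr expansion of $|g|^2$, and the same issue affects your integration-by-parts argument for the oscillatory cross term (the frequencies $\mu_n+2\theta_\E(\xi)$ can be small). None of this is supplied.

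The paper avoids all of this by using the diagonal Christoffel--Darboux identity \eqref{eq:Diagonal} \emph{instead of} the integral representation. That formula expresses $K_x(\xi,\xi)$ as a combination of boundary values at the single point $x$:
\[
K_x(\xi,\xi)=(\partial_xv)(x,\xi)\,(\partial_\xi v)(x,\xi)-v(x,\xi)\,(\partial_\xi\partial_xv)(x,\xi).
\]
When you differentiate $e^{\pm i\theta_\E(\xi)x}$ in $\xi$, you pull down $\pm i\theta_\E'(\xi)x$, which is simultaneously the source of the linear growth in $x$ \emph{and} of the factor $\theta_\E'=\pi f_\E$. A direct (if slightly tedious) expansion then gives
\[
(\partial_\xi v)(\partial_xv)-v(\partial_\xi\partial_xv)=2i\theta_\E'(\xi)\,x\,\bigl(-2i\theta_\E(\xi)|g(x,\xi)|^2+W(g,\overline g)(x)\bigr)+O(1),
\]
and now the Wronskian identity of Lemma~\ref{lem:WronskianIdent} is applied \emph{pointwise}, not on average: the bracket equals the constant $\frac{1}{2\pi i f_\mu(\xi)}$, so the leading term is $\frac{\theta_\E'(\xi)}{\pi f_\mu(\xi)}x=\frac{f_\E(\xi)}{f_\mu(\xi)}x$. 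The $O(1)$ error is manifest because $g$, $\partial_x g$, $\partial_\xi g$ are all uniformly bounded on $I$. No averaging, no small divisors. The moral: you cited \eqref{eq:Diagonal} but then discarded it in favor of $\int_0^x|v|^2$; the whole point is to use \eqref{eq:Diagonal} as a replacement for the integral, not as a restatement of it.
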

\begin{proof}
	We use the representation
	\[
	v(x,\xi)=e^{i\t_\E(\xi)x}g(x,\xi)+e^{-i\t_\E(\xi)x}\overline{g(x,\xi)}.
	\]
	All $O$ notation is related to $x\to\infty$. We have 
	\begin{align*}
	\partial_\xi v(x,\xi)&=i\t'_\E(\xi)x\left(e^{i\t_\E(\xi)x}g(x,\xi)+e^{-i\t_\E(\xi)x}\overline{g(x,\xi)}\right)+e^{i\t_\E(\xi)x}\partial_\xi g(x,\xi)-e^{-i\t_\E(\xi)x}\partial_\xi\overline{g(x,\xi)}\\
	&=i\t'_\E(\xi)x\left(e^{i\t_\E(\xi)x}g(x,\xi)-e^{-i\t_\E(\xi)x}\overline{g(x,\xi)}\right)+O(1).
	\end{align*}
	and
	\begin{align*}
	\partial_xv(x,\xi)=i\t_\E(\xi)e^{i\t_\E(\xi)x}g(x,\xi)-i\t_\E(\xi)e^{-i\t_\E(\xi)x}\overline{g(x,\xi)}+e^{i\t_\E(\xi)x}\partial_xg(x,\xi)+e^{-i\t_\E(\xi)x}\partial_x\overline{g(x,\xi)}
	\end{align*}
	and 
	\begin{align*}
	\partial_x\partial_\xi v(x,\xi)=i\t_\E'(\xi)x\left(i\t_\E(\xi)e^{i\t_\E(\xi)x}g(x,\xi)+i\t_\E(\xi)e^{-i\t_\E(\xi)x}\overline{g(x,\xi)}\right.\hspace{2cm}\\
	\left.+e^{i\t_\E(\xi)x}\partial_xg(x,\xi)-e^{-i\t_\E(\xi)x}\partial_x\overline{g(x,\xi)}\right)+O(1).
	\end{align*}
	Thus, having in mind \eqref{eq:Diagonal} we compute
	\begin{align*}
	\partial_\xi v(x,\xi)\partial_xv(x,\xi)-v(x,\xi)\partial_x\partial_\xi v(x,\xi)=2i\t'_\E(\xi)x\left(-2i\t_\E(\xi)|g|^2+W(g(\cdot,\xi),\overline{g(\cdot,\xi)})(x)\right)+O(1)
	\end{align*}
	Using Lemma \ref{lem:WronskianIdent} and \eqref{eq:rhoPrime} we see
	\[
	2i\t'_\E(\xi)x(-2i\t_\E(\xi)|g|^2+W(g(\cdot,\xi),\overline{g(\cdot,\xi)})(x))=\frac{2\pi if_\E(\xi)}{2\pi if_\mu(\xi)}x
	\]
	which finishes the proof by noting the the $O(1)$ is uniform for $\xi\in I$. 
\end{proof}
We will use later that this implies in particular  for any $\e>0$
\begin{align}\label{eq:1May27}
\lim_{L\to\infty}\frac{K_{(1+\e)L}(\xi,\xi)}{K_{L}(\xi,\xi)}=1+\e
\end{align}
uniformly for $\xi\in I$.

\section{Proofs of the Main theorems}\label{sec:Variational}
The goal of this section is to prove Theorem \ref{thm1},\ref{thm2} and \ref{thm3}.  We will need some preparatory work. 

Recall the set
\[
S_L=\left\{\int_0^L\cos(\sqrt{z}t)f(t)dt, \quad f\in L^2((0,L))\right\}
\]
and that 
\begin{align}\label{eq:minLambda}
\l_L(\xi)=\min\left\{\|F\|^2_{L^2_{d\mu}}: F\in S_L,\, F(\xi)=1\right\},
\end{align}
with minimizer
\begin{align}\label{eq:minimizer}
Q_L(z,z_0)=\frac{K_L(z,z_0)}{K_L(z_0,z_0)}.
\end{align}

\begin{lemma}\label{lem:funcionF}
	Let $d_0\in\bbR$ and $\xi_0>d_0$. Then there is $c>0$ such that 
	\begin{align*}
	F_c(z)=\frac{\sin(c(\sqrt{z-d_0}-\sqrt{\xi_0-d_0}))}{\sqrt{z-d_0}-\sqrt{\xi_0-d_0}}+\frac{\sin(c(\sqrt{z-d_0}-\sqrt{\xi_0-d_0}))}{\sqrt{z-d_0}+\sqrt{\xi_0-d_0}}
	\end{align*}
	satisfies 
	\begin{enumerate}[(i)]
		\item $|F_c(\xi)|\leq F_c(\xi_0)$ for all $\xi\geq d_0$;
		\item For any $\d>0$ there exists $\e>0$  such that for any $\xi\in[d_0,\infty)\setminus(\xi_0-\d,\xi_0+\d)$ it holds that $|F_c(\xi)|\leq F_c(\xi_0)-\e$;
		\item $F_c\in S_c$. 
	\end{enumerate}
\end{lemma}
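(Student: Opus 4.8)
The plan is to put $F_c$ into a Christoffel--Darboux-type integral form — which makes (iii) transparent — and then to reduce (i)--(ii) to a pointwise inequality for the $\sinc$ function that pins down the required $c$.

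\textbf{Integral representation and (iii).} Write $\kappa=\sqrt{z-d_0}$, $\kappa_0=\sqrt{\xi_0-d_0}$, and read the second summand of $F_c$ with $+\kappa_0$ inside the sine (this is the form in which $F_c$ is a single-valued entire function of $z$). Using $\sinc(a)=\int_0^1\cos(at)\,dt$, the substitution $s=ct$, and $\cos((\kappa-\kappa_0)s)+\cos((\kappa+\kappa_0)s)=2\cos(\kappa s)\cos(\kappa_0 s)$,
\[
F_c(z)=\frac{\sin(c(\kappa-\kappa_0))}{\kappa-\kappa_0}+\frac{\sin(c(\kappa+\kappa_0))}{\kappa+\kappa_0}=2\int_0^c\cos\bigl(\sqrt{z-d_0}\,s\bigr)\cos\bigl(\sqrt{\xi_0-d_0}\,s\bigr)\,ds .
\]
Here $s\mapsto\cos(\sqrt{z-d_0}\,s)$ is the Neumann solution for the constant potential $V\equiv d_0$, so by \eqref{eq:eigenvalueExp} the map $f\mapsto\int_0^c f(s)\cos(\sqrt{z-d_0}\,s)\,ds$ is the eigenfunction transform for that potential, whose range is a de Branges space which, by \eqref{eq:SetEqual}, equals $S_c$ as a set (independently of the potential). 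Since $s\mapsto 2\cos(\sqrt{\xi_0-d_0}\,s)\in L^2((0,c))$, this gives $F_c\in S_c$, i.e.\ (iii). (Alternatively, one checks directly that $k\mapsto F_c(k^2)$ is even, entire, of exponential type $\le c$, and $L^2$ on $\bbR$.)

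\textbf{Reduction and the choice of $c$.} For real $\xi\ge d_0$ put $\kappa=\sqrt{\xi-d_0}\ge 0$; then $F_c(\xi)=c\,H(c\kappa)$, where
\[
H(y):=\sinc(y-y_0)+\sinc(y+y_0)=2\int_0^1\cos(yt)\cos(y_0 t)\,dt, \qquad y_0:=c\sqrt{\xi_0-d_0},
\]
so (i) says $|H(y)|\le H(y_0)$ for all $y\ge0$, and (ii) is the strengthening that $H(y_0)-|H(y)|$ stays bounded away from $0$ outside a neighbourhood of $y_0$. A short computation gives $F_c'(\xi_0)=(\text{nonzero})\cdot\sinc'(2y_0)$, so $2y_0$ is forced to be a positive critical point of $\sinc$; I take $c$ with $2c\sqrt{\xi_0-d_0}=x_1$, the smallest positive root of $\tan x=x$ (so $x_1\in(\pi,\tfrac{3\pi}{2})$, $\sinc'(x_1)=0$, $\sinc(x_1)\in(-\tfrac1\pi,0)$). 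Then $H'(y_0)=0$; the identity $\sinc''=-\sinc-\tfrac2x\sinc'$ gives $\sinc''(x_1)=-\sinc(x_1)\in(0,\tfrac13)$, so $H''(y_0)=-\tfrac13+\sinc''(x_1)<0$; and, using $\sin x_1=x_1\cos x_1$, $H(y_0)=1+\sinc(x_1)=2\cos^2 y_0>0$.

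\textbf{The $\sinc$ inequality, (i), (ii), and the obstacle.} For the lower bound, each summand of $H$ is $\ge\min_{\bbR}\sinc=\sinc(x_1)>-\tfrac1\pi$, so $H(y)>-\tfrac2\pi>-(1-\tfrac1\pi)>-H(y_0)$. For the upper bound, split $y\ge0$: if $y\ge y_0+\pi$, then $|y-y_0|\ge\pi$ and $y+y_0>2\pi$, so $|\sinc(y-y_0)|$ and $|\sinc(y+y_0)|$ are bounded by the relevant tail suprema of $\sinc$ and $|H(y)|<\tfrac13<H(y_0)$. On $[0,y_0+\pi]$, I claim $H$ is nondecreasing on $[0,y_0]$ and nonincreasing on $[y_0,y_0+\pi]$, whence $H\le H(y_0)$ there with equality only at $y_0$. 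For $y\in[0,y_0]$ one has $H'(y)=\sinc'(y+y_0)-\sinc'(y_0-y)$; since $\sinc'<0$ on $(0,x_1)$ and vanishes at $0$ and $x_1$, $H'\ge0$ there is equivalent to $|\sinc'(y_0+s)|\le|\sinc'(y_0-s)|$ for $0\le s\le y_0$, which one verifies from $\sinc'(x)=\frac{x\cos x-\sin x}{x^2}$; for $y\in[y_0,y_0+\pi]$ the argument is the same, now using that $y+y_0\in[x_1,x_1+\pi]$ and $\sinc$ is increasing there, so that $H'(y)=\sinc'(y-y_0)+\sinc'(y+y_0)$ is a sum of a nonpositive and a nonnegative term. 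Combining the three ranges gives $|H(y)|\le H(y_0)$ with equality iff $y=y_0$ (i.e.\ $\xi=\xi_0$), which is (i); and since $H$ is continuous, strictly less than $H(y_0)$ off $y_0$, and tends to $0$ at $\infty$, its supremum over $\{y\ge0:\ |y-y_0|\ge\delta'\}$ is $<H(y_0)$ for each $\delta'>0$, which after pulling $\delta'$ back to the $\xi$-variable and rescaling by $c$ yields (ii). The only non-routine point is the monotonicity of $H$ on $[0,y_0+\pi]$, i.e.\ the symmetric-decrease estimates for $|\sinc'|$ about $y_0=x_1/2$; these are elementary but delicate, and this is exactly where the resonance condition $\tan(2c\sqrt{\xi_0-d_0})=2c\sqrt{\xi_0-d_0}$ is indispensable — for any other $c$, $F_c$ is strictly monotone at $\xi_0$, so $|F_c|$ does not peak there and (i) fails.
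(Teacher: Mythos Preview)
Your approach is essentially the paper's: the same choice of $c$ (via the smallest positive root $x_1$ of $\tan x=x$, i.e.\ the paper's $u_0$ with $\tan 2u_0=2u_0$ and $c=u_0/\sqrt{\xi_0-d_0}$), the same reduction of (i)--(ii) to the inequality $|H(y)|\le H(y_0)$ for $H(y)=\sinc(y-y_0)+\sinc(y+y_0)$, and the same identification of $F_c$ with twice the reproducing kernel $K_c(\cdot,\xi_0)$ for the constant potential $V\equiv d_0$ to obtain (iii). The only cosmetic difference is that you reach $F_c=2K_c(\cdot,\xi_0)$ via the integral representation \eqref{eq:CDKernel2}, while the paper computes it from the Christoffel--Darboux formula \eqref{def:CDKernel} together with trigonometric identities; these are two sides of the same identity.

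One remark on the part where you go further than the paper. The paper simply asserts $|G(u)|\le G(u_0)$ as something that ``can be directly verified'', whereas you attempt a proof via monotonicity of $H$ on $[0,y_0+\pi]$. On $[y_0,y_0+\pi]$ your stated observation --- that $H'(y)=\sinc'(y-y_0)+\sinc'(y+y_0)$ is a sum of a nonpositive and a nonnegative term --- does not by itself yield $H'\le 0$; the ``same argument'' you allude to must be the comparison $|\sinc'(y-y_0)|\ge\sinc'(y+y_0)$ for $y-y_0\in[0,\pi]$ and $y+y_0\in[x_1,x_1+\pi]$, and that (like the symmetric-decrease estimate for $|\sinc'|$ about $y_0$ on $[0,y_0]$) is asserted rather than verified. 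Also, your tail bound ``$|H(y)|<\tfrac13$'' for $y\ge y_0+\pi$ is slightly too optimistic (one gets $|H(y)|\le|\sinc(x_1)|+|\sinc(x_2)|\approx 0.35$), though of course still comfortably below $H(y_0)=1+\sinc(x_1)\approx 0.78$. None of this is worse than what the paper does, but the monotonicity step is not yet closed.
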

\begin{proof}
	First we find $c$ so that $|F_c|$ has a global maximum at $\xi_0$. 
	Define
	\[
	G(u)=\frac{\sin(u-u_0)}{u-u_0}+\frac{\sin(u+u_0)}{u+u_0},
	\]
	where $u_0$ is the first positive root of $\tan(2u_0)-2u_0$. It can be directly verified $G$ is even and that $|G(u)|\leq G(u_0)$ for all $u\in \bbR$. From this it follows that for given $\z_0>0$, $c=u_0/\z_0$ and
	\[
	H(\z)=\frac{\sin( c(\z-\z_0))}{\z-\z_0}+\frac{\sin( c(\z+\z_0))}{\z+\z_0},
	\]
	that $|H|$ 	has a global maximum at $\z_0$. Since $\z(z)=\sqrt{z-d_0}$ maps $[d_0,\infty)$  onto $[0,\infty)$ the first claim follows by substitution and setting $\z_0=\z(\xi_0)=\sqrt{\xi_0-d_0}, c=u_0/\sqrt{\xi_0-d_0}$. It remains to show that $F\in S_c$. 
	
	We claim that $F_c$ is the reproducing kernel $K_c(z,\xi_0)$ for the constant potential $V=d_0$. The Christoffel Darboux formula \eqref{def:CDKernel} applied to $v(x,z)=\cos(x\sqrt{z-d_0})$ yields
	\[
	K_c(z,\xi_0)=\frac{-\sqrt{z-d_0}\cos(c\sqrt{\xi_0-d_0})\sin(c\sqrt{z-d_0})+\sqrt{\xi_0-d_0}\sin(c\sqrt{\xi_0-d_0})\cos(c\sqrt{z-d_0})}{\xi_0-z}.
	\]
	Using trigonometric identities we get that $K_c(z,z_0)$ is given by 
	\begin{align*}
	\frac{1}{2}\frac{\sin(c(\sqrt{z-d_0}-\sqrt{\xi_0-d_0}))(\sqrt{z-d_0}+\sqrt{\xi_0-d_0})+\sin(c(\sqrt{z-d_0}+\sqrt{\xi_0-d_0}))(\sqrt{z-d_0}-\sqrt{\xi_0-d_0})}{z-d_0-(\xi_0-d_0)}.
	\end{align*}
	Thus,
	\[
	F_c(z)=2K_c(z,\xi_0)
	\]
	and in particular, $F_c\in S_c$.
\end{proof}
\begin{remark}
	For later reference we mention that the $c$ is explicitly constructed in the proof. Let $u_0$ be the first positive solution of $2u=\tan(2u)$, i.e., $u_0$ is a constant not depending on $d_0,\xi_0$. Then $c$ is given by	
	\begin{align*}
	c=\frac{u_0}{\sqrt{\xi_0-d_0}}
	\end{align*}
	In particular, if $d_0(\e)=\min \E-\e$ and $\xi_0\in \Int(\E)$, then 
	\begin{align}\label{eq:cLimit}
	\lim\limits_{\e\to 0}\e c(\e)=0
	\end{align}
	and this limit is uniform for $\xi_0\in [a,b]\subset \Int(\E)$. 
\end{remark}

Recall the $\delta$-extension \eqref{eq:dextension}.
The following estimate is the crucial bound which allows to prove \eqref{eq:CFasymp} for regular potentials:
\begin{lemma}\label{lem:QEstimate}
	Let $V$ be a Stahl--Totik regular potential such that  $\E=\sigma_\ess(H_V)$ is Dirichlet regular, $\mu$ the associated spectral measure and $Q_L$ as in \eqref{eq:minimizer}. Then for any $\e>0$ there is $C>0$ and $\d>0$ such that for any $\xi\in \E_\d$ and $L>0$
	\[
	|Q_L(\xi,\xi_0)|\leq Ce^{\e L}\sqrt{\l_L(\xi_0)}.
	\] 
\end{lemma}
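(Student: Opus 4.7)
The plan is to reduce the bound on $|Q_L(\xi,\xi_0)|$ to a bound on $K_L(\xi,\xi)$ via the reproducing kernel formalism, and then to control $K_L(\xi,\xi) = \int_0^L |v(x,\xi)|^2\,dx$ by the uniform exponential estimate for Neumann solutions on $\E_\d$ provided by Theorem \ref{thm:regBounds}.

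First I would recall that $Q_L(z,z_0)=K_L(z,z_0)/K_L(z_0,z_0)$ and $\l_L(\xi_0)=1/K_L(\xi_0,\xi_0)$. Applying the Cauchy--Schwarz inequality either to the integral representation \eqref{eq:CDKernel2} in $L^2((0,L))$ or, equivalently, to the reproducing kernel inner product on $B(E_L)$, yields
\[
|K_L(\xi,\xi_0)|^2 \leq K_L(\xi,\xi)\,K_L(\xi_0,\xi_0).
\]
Dividing by $K_L(\xi_0,\xi_0)^2$ gives
\[
|Q_L(\xi,\xi_0)| \leq \sqrt{K_L(\xi,\xi)\,\l_L(\xi_0)}.
\]
Thus it suffices to prove $\sqrt{K_L(\xi,\xi)} \leq C e^{\e L}$ uniformly for $\xi\in\E_\d$.

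For this, given $\e>0$ I would apply Theorem \ref{thm:regBounds} with exponent $\e/2$: there exist $\d>0$ and $C_1>0$ such that for every $\xi\in\E_\d$ and every $x>0$,
\[
|v(x,\xi)| \leq C_1 e^{\e x/2}.
\]
Integrating the square over $[0,L]$ gives
\[
K_L(\xi,\xi) = \int_0^L |v(x,\xi)|^2\,dx \leq C_1^2 \int_0^L e^{\e x}\,dx \leq \frac{C_1^2}{\e} e^{\e L},
\]
so $\sqrt{K_L(\xi,\xi)} \leq (C_1/\sqrt{\e})\,e^{\e L/2}$. Combining with the Cauchy--Schwarz step and setting $C=C_1/\sqrt{\e}$ completes the proof.

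There is really no obstacle here, since the hard analytic work has already been done in Theorem \ref{thm:regBounds}: the combination of Stahl--Totik regularity and Dirichlet regularity of $\E$ is precisely what guarantees the uniform sub-exponential growth of $|v(x,\xi)|$ for $\xi$ in a neighborhood of $\E$ (together with a half-line at $+\infty$). The only point that requires a small bookkeeping choice is applying Theorem \ref{thm:regBounds} with $\e/2$ rather than $\e$, so that the factor $1/\sqrt{\e}$ produced by integration can be absorbed into the constant $C$ while keeping the exponent $\e L$ on the right-hand side.
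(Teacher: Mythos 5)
Your proposal is correct and follows essentially the same route as the paper: Cauchy--Schwarz for the reproducing kernel, combined with the uniform exponential bound on $|v(x,\xi)|$ over $\E_\d$ from Theorem \ref{thm:regBounds}. The only cosmetic difference is that you invoke Theorem \ref{thm:regBounds} with $\e/2$ to control the $1/\sqrt{\e}$ factor, whereas the paper simply uses $\e$ directly and absorbs that factor into the constant $C$ (which is permitted, since $C$ is allowed to depend on $\e$); both choices give the stated bound.
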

\begin{proof}

By Theorem \ref{thm:regBounds} we find $\d>0$ so that for $\xi\in\E_\d$ and $x>0$
	\[
	|v(x,\xi)|\leq \tilde Ce^{\e x}.
	\]
	Thus,
	\begin{align*}
	\int_0^L v(x,\xi)^2dx\leq \tilde C^2\int_0^L e^{2\e x}dx=\frac{\tilde C^2}{2\e}(e^{2\e L}-1)\leq \frac{\tilde C^2}{2\e}e^{2\e L}.
	\end{align*}
	Thus with $C^2=\tilde C^2/2\e$ we have 
	\[
	\left(\int_0^L v(x,\xi)^2dx\right)^{1/2}\leq Ce^{\e L}.
	\] 
	On the other hand, by the reproducing kernel property 
	\[
	|K_L(\xi,\xi_0)|=|\langle K_L(\cdot,\xi_0),K_L(\cdot,\xi)\rangle|\leq \| K_L(\cdot,\xi_0)\|\| K_L(\cdot,\xi)\|=\sqrt{ K_L(\xi_0,\xi_0)}\sqrt{ K_L(\xi,\xi)}
	\]
	Using
	\[
	K_L(\xi,\xi)=\int_0^L v(x,\xi)^2dx,\quad \l_L(\xi_0)=\frac{1}{K_L(\xi_0,\xi_0)}
	\]
	and \eqref{eq:minimizer} the claim follows. 
\end{proof}

We are now ready to prove the main comparising result that allows to lift the results from Section \ref{sec:FiniteGap} to arbitrary regular potentials.

\begin{theorem}\label{thm:VariationalPrinc}
	Let $V,\tilde V$ be potentials satisfying \eqref{L1locunif} and $\mu,\tilde\mu$ the associated spectral measures and $\E=\sigma_\ess(H_V),\tilde \E=\sigma_\ess(H_{\tilde V})$. Suppose that $V$ is a Stahl--Totik regular potential and $\E$ Dirichlet regular. Let $I$ be a closed interval such that $I\subset \Int(\E)\cap \Int(\tilde \E)$, $\mu$ and $\tilde\mu$ are absolutely continuous in a neighborhood of $I$ and its densities $f_\mu,f_{\tilde \mu}$ are positive and continuous at every point of $I$. For any $\e>0$, let $\d>0$ be as in Lemma   \ref{lem:QEstimate}. If there exists $\d_1>0$ such that
	\begin{align}\label{eq:EdeltaInclusion}
	\tilde\E_{\d_1}\subset \E_{\d}
	\end{align}
	then there exist $\d_2=\d_2(I)>0$, $D=D(\e)>0$ and $\g=\g(\e,\d_2)<1$ such that 
	\[
	\frac{\l_M(\xi_0,\tilde V)}{\l_L(\xi_0, V)}\leq \sup_{|\xi-\xi_0|<\d_2}\frac{f_{\tilde{\mu}}(\xi)}{f_\mu(\xi)}+D\g^{4N}e^{2\e L}+De^{2\e L}2^{-2N},
	\]
	where $M=L+2Nc$ and $N=N(\e)$ is sufficiently large. 
\end{theorem}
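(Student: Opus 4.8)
The plan is to bound $\l_M(\xi_0,\tilde V)$ from above by inserting an explicit competitor into the extremal problem \eqref{eq:minLambda} for $\tilde V$: the minimizer $Q_L(\cdot,\xi_0)$ for $\l_L(\cdot,V)$ multiplied by a high power of the peaking function $F_c$ from Lemma \ref{lem:funcionF}. With $d_0=\min\E-\e$ (chosen so that $\supp\mu\cup\supp\tilde\mu\subset[d_0,\infty)$, which is possible by \eqref{eq:EdeltaInclusion}) and $c=c(\e)$ the constant from Lemma \ref{lem:funcionF} and the remark after it, set
\[
G_N(z)=Q_L(z,\xi_0)\left(\frac{F_c(z)}{F_c(\xi_0)}\right)^{2N}.
\]
Since $Q_L(\cdot,\xi_0)\in S_L$ and $F_c\in S_c$, the function $G_N$ is even, entire, and of exponential type at most $L+2Nc=M$ in $\sqrt z$; by Lemma \ref{lem:funcionF}(i) we have $|F_c/F_c(\xi_0)|\le1$ on $[d_0,\infty)\supset\supp\tilde\mu$, so $|G_N|\le|Q_L(\cdot,\xi_0)|$ there and hence $G_N\in L^2_{d\tilde\mu}$; together these give $G_N\in S_M$, and $G_N(\xi_0)=Q_L(\xi_0,\xi_0)=1$. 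Therefore
\[
\l_M(\xi_0,\tilde V)\le\|G_N\|^2_{L^2_{d\tilde\mu}}=\int_\bbR|G_N(\xi)|^2\,d\tilde\mu(\xi),
\]
and everything comes down to estimating this integral over three regions: a small window $J$ around $\xi_0$, the rest of $\E_\d$, and $\bbR\setminus\E_\d$.

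On $J=(\xi_0-\d_2,\xi_0+\d_2)$, with $\d_2=\d_2(I)$ small enough that both $\mu$ and $\tilde\mu$ are purely absolutely continuous on $J$, the bound $|F_c/F_c(\xi_0)|\le1$ gives $|G_N|^2\le|Q_L(\cdot,\xi_0)|^2$, so
\[
\int_J|G_N|^2\,d\tilde\mu\le\Big(\sup_{|\xi-\xi_0|<\d_2}\tfrac{f_{\tilde\mu}(\xi)}{f_\mu(\xi)}\Big)\int_J|Q_L(\xi,\xi_0)|^2 f_\mu(\xi)\,d\xi\le\Big(\sup_{|\xi-\xi_0|<\d_2}\tfrac{f_{\tilde\mu}(\xi)}{f_\mu(\xi)}\Big)\l_L(\xi_0,V),
\]
where the last inequality uses that $\mu$ is absolutely continuous on $J$ and $\int_\bbR|Q_L|^2\,d\mu=\|Q_L\|^2_{L^2_{d\mu}}=\l_L(\xi_0,V)$; this is the first term of the asserted bound, and continuity of $f_\mu,f_{\tilde\mu}$ on $I$ is what makes the supremum behave well as $\d_2\to0$.

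For the remaining two regions I would combine Lemma \ref{lem:QEstimate}, which gives $|Q_L(\xi,\xi_0)|\le Ce^{\e L}\sqrt{\l_L(\xi_0,V)}$ uniformly on $\E_\d$, with the decay of $F_c$ away from $\xi_0$. On the bounded part of $\E_\d\setminus J$, Lemma \ref{lem:funcionF}(ii) gives $|F_c(\xi)/F_c(\xi_0)|\le\g<1$ with $\g=\g(\e,\d_2)$, and integrating the finite measure $\tilde\mu$ over this bounded set yields a contribution $O(\g^{4N}e^{2\e L})\l_L(\xi_0,V)$. On the half-line part of $\E_\d$ one uses that $|F_c(\xi)|\to0$, so $|F_c/F_c(\xi_0)|\le\tfrac12$ for $\xi$ large, together with the decay $|F_c(\xi)|^4=O(\xi^{-2})$ and the Herglotz integrability $\int(1+\xi^2)^{-1}d\tilde\mu<\infty$, to bound this piece by $O(e^{2\e L}2^{-2N})\l_L(\xi_0,V)$. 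Finally, $\bbR\setminus\E_\d$ is a bounded set carrying only singular $\tilde\mu$-mass (since $\sigma_{\ac}(H_{\tilde V})\subset\tilde\E\subset\E_\d$), lying at distance $\ge\d$ from $\xi_0$; there $(F_c/F_c(\xi_0))^{4N}$ is exponentially small in $N$ and $|Q_L(\xi,\xi_0)|^2\le\big(\int_0^L|v(x,\xi)|^2\,dx\big)\l_L(\xi_0,V)$ is controlled by the crude exponential bounds for the $V$-Neumann solution on a compact set (Theorem \ref{thm:locUnif} and \eqref{eq:EstimatesSolution2}), so for $N=N(\e)$ large enough this too is at most $O(e^{2\e L}2^{-2N})\l_L(\xi_0,V)$. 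Adding the three contributions and dividing by $\l_L(\xi_0,V)$ gives the inequality. The step I expect to be the main obstacle is this last estimate: making $C,\g,\d_2$ and the $O$-constants uniform in $\xi_0\in I$ — which is exactly the role of Theorem \ref{thm:regBounds}, Lemma \ref{lem:QEstimate}, the continuity of the densities, and the uniform limit \eqref{eq:cLimit} — and correctly balancing the three competing exponential rates $e^{2\e L}$, $\g^{4N}$ and $2^{-2N}$ on the portion of the line where the $V$-Neumann solution is allowed to grow, which is why $N$ has to be taken sufficiently large.
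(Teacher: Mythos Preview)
Your overall strategy --- insert $Q_L(\cdot,\xi_0)$ times a high power of $F_c/F_c(\xi_0)$ as a competitor and split the $L^2_{d\tilde\mu}$-integral into a small window, a bounded piece, and a tail --- is exactly the paper's, and your treatment of the window $J$ and of the tail parallels the paper's estimates on $I_0$ and $I_2$.

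The genuine gap is your last region, $\supp\tilde\mu\setminus\E_\d$. At a point mass $\zeta$ of $\tilde\mu$ lying outside $\E_\d$, Lemma~\ref{lem:QEstimate} does not apply; the Cauchy--Schwarz bound $|Q_L(\zeta,\xi_0)|^2\le K_L(\zeta,\zeta)\,\l_L(\xi_0,V)$ together with Theorem~\ref{thm:locUnif} only gives $K_L(\zeta,\zeta)\le C\,e^{2M_\E(\zeta)L}$, with $M_\E(\zeta)>0$ a fixed positive number independent of $\e$. The contribution from $\zeta$ is therefore of order $\g^{4N}e^{2M_\E(\zeta)L}\l_L(\xi_0,V)$, and no constant $D$ makes this $\le D\,e^{2\e L}2^{-2N}$ for all $L$ and all $N$ above a fixed threshold: to compensate $e^{2M_\E(\zeta)L}$ you would need $N$ proportional to $L$ with a proportionality constant that does \emph{not} vanish as $\e\to0$, which would keep $M/L=1+2Nc/L$ bounded away from $1$ and destroy the limiting argument in the proof of Theorem~\ref{thm1}. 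You yourself flag this balance as the obstacle, but taking $N$ large does not resolve it.

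The paper's remedy is to insert one more factor into the competitor: a polynomial $P$ of degree $n+1$ that vanishes at the finitely many point masses $\zeta_1,\dots,\zeta_n$ of $\tilde\mu$ in $\bbR\setminus\tilde\E_{\d_1}$ (there are only finitely many precisely because $\tilde\E_{\d_1}$ is a finite gap set), normalized so that $P(\xi_0)=1$ with a local maximum there. The test function $Q=Q_L^V\,G^{2N}P$ then vanishes at every such $\zeta_j$, so that region contributes nothing, and on the remainder of $\supp\tilde\mu\subset\tilde\E_{\d_1}\subset\E_\d$ Lemma~\ref{lem:QEstimate} does apply. The polynomial growth of $P$ at infinity is absorbed by the decay $|G(\xi)|^{2N}\le C_1^{2N}(\xi-\xi_0)^{-N}$ on the tail once $N>n+1$, which is the source of the threshold on $N$. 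A minor secondary point: membership in $S_M$ requires $G_N(k^2)\in L^2_{dk}(\bbR)$, not $G_N\in L^2_{d\tilde\mu}$; the correct argument (as in the paper) is that $G^{2N}$ --- or $G^{2N}P$ --- is bounded on $\bbR_+$ while $Q_L(k^2)\in L^2_{dk}$.
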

\begin{remark}
	Let us comment on the meaning of \eqref{eq:EdeltaInclusion}. In the proof we will need to estimate the extremizer for $\l_L(\xi_0, V)$ on $\supp\tilde\mu$. Close to the spectrum this can be done due to regularity by  Lemma \ref{lem:QEstimate}. However, there may be point masses of $\tilde\mu$ in the gaps. By extending $\tilde E$ we ensure that there are only finitely many point masses in $\bbR\setminus \tilde \E_{\d_1}$, since there can only be finitely many eigenvalues in each gap of $\tilde \E_{\d_1}$ and $\tilde \E_{\d_1}$ is a finite gap set. 
\end{remark}
\begin{proof}
	Let $Q^V_L(\xi,\xi_0)$ be the minimizer for $V$ and the point $\xi_0$. Then by Lemma \ref{lem:QEstimate} for $\xi\in \E_\d$ we have
	\begin{align}\label{eq:1May5}
	|Q^V_L(\xi,\xi_0)|\leq Ce^{\e L}\sqrt{\l_L(\xi_0,V)}.
	\end{align}
	Thus by assumption this also holds on $\tilde\E_{\d_1}$. Let $d_0=\min \tilde\E_{\d_1}$, $F_c$ as in Lemma \ref{lem:funcionF} and 
	\[
	G(z)=\frac{F_c(z)}{F_c(\xi_0)}.
	\] 
	Then 
	\begin{enumerate}[(i)]
		\item $G(\xi_0)=1$;
		\item for any $r>0$ there is $\g<1$ such that for every $\xi>d_0$ with $|\xi-\xi_0|>r$, $|G(\xi)|<\g$;
		\item\label{it:decayG} there is $C_1>0$ such that for $\xi>\xi_0+1$, $|G(\xi)|\leq \frac{C_1}{\sqrt{\xi-\xi_0}}.$
		\item $G\in S_c$.
	\end{enumerate}
	Since $\tilde\E_{\d_1}$ has only finitely many gaps, there are only finitely many point masses of $\tilde \mu$ in $\bbR\setminus\tilde\E_{\d_1}$. Let these  points be denoted by $\z_1,\dots,\z_n$. Let $P$ be a polynomial of degree $n+1$ that vanishes at these points and $P$ has a local maximum at $\xi_0$ such that $P(\xi_0)=1$. Let $N> n+1$ and define 
	\[
	Q=Q^V_LG^{2N}P. 
	\] 
	We claim that $Q\in S_{L+2Nc}$. Use $k^2=z$  and define
	\[
	H(k)=Q(k^2).
	\]
	Since $Q^V_L\in S_L$ and $F\in S_c$ it follows that $H$ is an even entire function of exponential type at most $L+2Nc$. Thus, by \eqref{eq:cSL} it remains to show that $H\in L^2_{dk}$. By \eqref{it:decayG} $G^{2N}P$ are bounded on $\bbR_+$. Moreover, since $Q^V_L\in S_L$, $Q_L(k^2)\in L^2_{dk}$ and we conclude that $H\in L^2_{dk}$. Moreover, we have $Q(\xi_0)=1$.  
	
	Thus, by \eqref{eq:minLambda} we get
	\begin{align}\label{eq:July6}
	\l_{L+2Nc}(\xi_0,\tilde V)\leq \|Q\|^2_{L^2_{d\tilde \mu}}.
	\end{align} 
	We will split the integral into several parts. First let $\d_2>0$ such that on $I_0=(\xi_0-\d_2,\xi_0+\d_2)$, $\mu,\tilde \mu$ are purely absolutely continuous and both are positive there. This can be achieved since they are continuous at every point of $I$. Moreover, let $\d_2$ be sufficiently small so that $|G|,|P|\leq 1$ on $I_0$.  Then
	\begin{align*}
	\int_{I_0}|Q(\xi)|^2d\tilde \mu(\xi)&\leq \int_{I_0}|Q_L^{V}(\xi,\xi_0)|^2d\tilde \mu(\xi)\\
	&\leq \sup_{t\in I_0}\frac{ f_{\tilde\mu}(t)}{f_\mu(t)}\int_{I_0}|Q^V_L(\xi,\xi_0)|^2d \mu(\xi)\\
	&\leq\sup_{t\in I_0}\frac{ f_{\tilde\mu}(t)}{f_\mu(t)}\l_L(\xi_0,V).
	\end{align*}
	Let us note that on $\supp(\tilde \mu)$ we have
	\begin{align}\label{eq:2May5}
	|Q(\xi)|\leq Ce^{\e L}\sqrt{\l_L(\xi_0)}|G(\xi)|^{2N}|P(\xi)|.
	\end{align}
	For, we have already argued that \eqref{eq:1May5} holds on $\tilde\E_{\d_1}$. Thus, the only points where \eqref{eq:2May5} may fail are the finite point masses of $\tilde \mu$ in $\bbR\setminus\tilde\E_{\d_1}$. But this is where $P$ vanishes and thus we obtain \eqref{eq:2May5} on $\supp(\tilde \mu)$. 
	
	Let $I_1=(\supp(\tilde \mu)\setminus I_0)\cap(-\infty, \xi_0+1]$ and $I_2=(\supp(\tilde \mu))\cap(\xi_0+1, \infty)$.
	Then
	\begin{align*}
	\int_{I_1}|Q(\xi)|^2d\tilde \mu(\xi)&\leq C^2e^{2\e L}\l_L(\xi_0,V)\int_{I_1} |G(\xi)|^{4N}|P(\xi)|^{2}d\tilde \mu(\xi)\\
	&\leq \g^{4N}C^2e^{2\e L}\l_L(\xi_0,V)\int_{I_1} |P(\xi)|^{2}d\tilde \mu(\xi)\\
	&=\g^{4N}C^2e^{2\e L}C_1\l_L(\xi_0,V).
	\end{align*}
	
	Since 
	\[
	\int_{\bbR}\frac{d\tilde\mu(\xi)}{1+\xi^2}<\infty,
	\]
	we get for $n\geq 2$
	\[
	\int_2^\infty\frac{d\mu(\xi)}{\xi^n}\leq \frac{K}{2^n},\quad K=4\int_2^\infty\frac{d\mu(\xi)}{\xi^2}.
	\]
	We conclude
	\begin{align*}
	\int_{I_2}|Q(\xi)|^2d\tilde \mu(\xi)&\leq C^2e^{2\e L}\l_L(\xi_0,V)\int_{I_2} |G(\xi)|^{4N}|P(\xi)|^{2}d\tilde \mu(\xi)\\
	&\leq C^2C_2e^{2\e L}\l_L(\xi_0,V)\int_{I_2} \frac{\xi^{2(n+1)}}{(\xi-\xi_0)^{2N}}d\tilde \mu(\xi)\leq C_3e^{2\e L}\l_L(\xi_0,V)2^{-2N}.
	\end{align*}
	Combining the integrals over $I_1,I_2,I_3$ and using \eqref{eq:July6} yields the claim.
\end{proof}
\begin{remark}
	In the proof we have $d_0=\min \tilde\E_{\d_1}$ and thus $d_0$ depends on $\e$. By the definition of $G$ via $F$ in Lemma \ref{lem:funcionF} this shows that $\g<1$, which is the maximum of $G$ outside of $(\xi_0-\d_2,\xi_0-\d_2)$, also depends on $\e$. However, for fixed $\d_2$ we have 
	\begin{align}\label{eq:tildeG}
	\g_1=\sup_{\e\in(0,1)}\g(\e,\d_2)<1.
	\end{align}
	This remains true, if $\xi_0\in [a,b]\subset\Int(\E)$. This will be important in the following. 
\end{remark}

We are now ready to prove Theorem \ref{thm1}:

\begin{proof}[Proof of Theorem \ref{thm1}]
	Assume that $\E=\sigma_{\ess}(\mu)$ and $\mu$ is regular and let $\E_r$ be the extension as defined above. Clearly $\E_r$ is a finite gap set and $\E_r$ decreases monotonically to $\E$. We conclude from Lemma \ref{lem:AppendixMartinMeasure} that $d\rho_{\E_r}$ is absolutely continuous on $I$ and that the densities $f_{\E_r}(\xi)$ increase with $r$ and are bounded above by $f_\E(\xi)$. We claim that 
	\begin{align}\label{measureIncreasing}
	\lim\limits_{r\to\infty}f_{\E_r}(\xi)=f_\E(\xi)
	\end{align}
	uniformly on $I$. By monotonicity and boundedness for every $\xi$, $\lim_{r\to\infty}f_{\E_r}(\xi)=g(\xi)$ exists and since $f_{\E_r}$ are in particular continuous the convergence is uniform and $g$ is continuous. On the other hand, using the upper envelope theorem, we conclude as in \cite[Lemma 6.1]{EichLuk} that $M_{\E_r}\to M_\E$, which implies $\rho_{\E_{r}}\to\rho_\E$ in the weak-$*$ sense. We conclude that $g=f_\E$ on $I$. 
	
	For fixed $r$ let $V_r$ be a finite gap potential as discussed in Section \ref{sec:FiniteGap} and $\mu_r$ its spectral measure. Note that $V_r$ is Stahl-Totik regular. We will apply Theorem \ref{thm:VariationalPrinc} with $\mu=\mu_r$ and $\tilde\mu =\mu$. Since $\E\subset \E_r$, \eqref{eq:EdeltaInclusion} is satisfied for arbitrary $\e>0$. Let $\e>0$ be fixed, and $\d_2,D$ be as in Theorem \ref{thm:VariationalPrinc} and $\g_1$ be defined by \eqref{eq:tildeG}. Then
	\[
	\frac{	\l_M(\xi_0, V)}{\l_L(\xi_0, V_r)}\leq \sup_{|\xi-\xi_0|<\d_1}\frac{f_\mu(t)}{f_{\mu_r}(t)}+D\g_1^{4N}e^{2\e L}+De^{2\e L}2^{-2N},
	\]
	where $M=L+2Nc$. Choose $\eta$ so that $\max\{\g_1, 1/\sqrt{2}\}^\eta\leq e^{-1}$ and $N=\eta \e L$. Note that by the definition of $\g_1$, $\eta$ does not depend on $\e$. It follows that
	\[
	D\g_1^{4N}e^{2\e L}+De^{2\e L}2^{-2N}=O(e^{-2\e L}).
	\] 
	Thus, 
	\[
	\limsup_{M\to\infty}\frac{	\l_M(\xi_0, V)}{\l_L(\xi_0, V_r)}\leq \sup_{|\xi-\xi_0|<\d_1}\frac{f_\mu(\xi)}{f_{\mu_r}(\xi)}.
	\]
	Since $M=L(1+2\eta \e c)$ we obtain by \eqref{eq:1May27} 
	\[
	\lim_{L\to\infty}\frac{\l_{L}(\xi_0,V_r)}{\l_{(1+2\eta \e c)L}(\xi_0,V_r)}=1+2\eta \e c.
	\]
	Therefore, 
	\[
	\limsup_{M\to\infty}\frac{	\l_M(\xi_0, V)}{\l_M(\xi_0,V_r)}\leq \sup_{|\xi-\z_0|<\d_1}\frac{f_\mu(\xi)}{f_{\mu_r}(\xi)}(1+2\eta \e c).
	\]
	Taking first $\e\to 0$ and using that $\e c(\e)\to 0$ by \eqref{eq:cLimit} and then $\d_1\to 0$ we get 
	\begin{align}\label{eq:2May26}
	\limsup_{M\to\infty}\frac{	\l_M(\xi_0, V)}{\l_M(\xi_0, V_r)}\leq \frac{f_\mu(\xi_0)}{f_{\mu_r}(\xi_0)}.
	\end{align}
	Since on the other hand $\E_r$ is a finite gap set we have 
	\[
	\lim\limits_{M\to\infty}M\l_M(\xi_0, V_r)=\frac{f_{\mu_r}(\xi_0)}{f_{\E_r}(\xi_0)}.
	\]
	Plugging this into \eqref{eq:2May26} yields 
	\begin{align*}
	\limsup\limits_{M\to\infty}M\l_M(\z_0, V)\leq \frac{f_\mu(\xi_0)}{f_{\E_r}(\xi_0)}. 
	\end{align*}
	By \eqref{measureIncreasing} sending $r\to 0$ we conclude
	\begin{align}\label{eq:4May26}
	\limsup\limits_{M\to\infty}M\l_M(\z_0, V)\leq \frac{f_\mu(\xi_0)}{f_{\E}(\xi_0)}. 
	\end{align}
	
	To get the opposite inequality we would like to switch the roles of $V$ and $V_r$. It can be seen from the proof of Theorem \ref{thm:VariationalPrinc} that in this case we can even take $\d_1=0$, since $\mu_r$ only has finitely many point masses outside $\E_r$. In Theorem \ref{thm:VariationalPrinc} we have to estimate the eigensolutions of $H_V$ on $\E_r$. This shows that for fixed $r>0$, we cannot take $\e\to 0$. However, for fixed $r>0$ we find $\e(r)$ and note that $\e(r)\to 0$ as $r\to 0$.
	
	Now switching the roles of $V$ and $V_r$ we get 
	\[
	\frac{	\l_M(\xi_0,V_r)}{\l_L(\xi_0, V)}\leq \sup_{|\xi-\xi_0|<\d_1}\frac{f_{\mu_r}(\xi)}{f_\mu(\xi)}+D\g_1^{4N}e^{2\e L}+De^{2\e L}2^{-2N},
	\]
	where $N=\eta \e L$ and as before 
	\[
	\limsup_{L\to\infty}\frac{\l_L(\xi_0, V_r)}{\l_L(\xi_0, V)}\leq\frac{f_{\mu_r}(\xi_0)}{f_\mu(\xi_0)}\frac{1}{1+2\eta \e c}
	\]
	and
	\[
	\limsup_{L\to\infty}\frac{1}{L\l_L(\z_0, V)}\leq\frac{f_{\E_r}(\xi_0)}{f_\mu(\xi_0)}\frac{1}{1+2\eta \e c}.
	\]
	But now for fixed $r$ we cannot take $\e\to 0$. However since $\e(r)\to 0$ as $r\to 0$ we get by \eqref{measureIncreasing}
	\[
	\limsup_{L\to\infty}\frac{	1}{L\l_L(\xi_0,V)}\leq\frac{f_{\E}(\xi_0)}{f_\mu(\xi_0)}
	\]
	and thus 
	\begin{align}\label{eq:3May26}
	\frac{f_\mu(\xi_0)}{f_{\E}(\xi_0)}\leq \liminf_{L\to\infty}L\l_L(\xi_0, V).
	\end{align}
	Combing \eqref{eq:4May26} and \eqref{eq:3May26} and noting that all the arguments are uniform in $\xi_0\in I$ yields the claim.
\end{proof}

\begin{proof}[Proof of Theorem \ref{thm2}]
Let $m$ be the Weyl $m$-function associated to $H_V$. We showed in Section \ref{sec:deBrangesSchroedinger} that $H_V$ can be written as a canonical system. Thus, by \cite[Theorem 9]{ELS} it follows that if for $\xi\in I$ the limit 
\begin{align}\label{eq:1Jan13}
\frac{1}{\pi}\lim_{z\to \xi}\Im m(z)=f_\mu(\xi)\in (0,\infty),
\end{align}
exists non-tangentially, then 
\begin{align}\label{eq:2Jan13}
\lim_{L\to\infty}\l_L(\xi)K_L(\xi+\l_L(\xi)z,\xi+\l_L(\xi)w)=\frac{\sin(\pi f_\mu(\xi)(\overline w-z))}{\pi f_\mu(\xi)(\overline w-z)},
\end{align}
uniformly for $\xi\in I$ and $z,w$ in compact subsets of $\bbC$. It follows from properties of Poisson integrals \cite[Theorem 11.22, Theorem 11.23]{RudinRealandComplex} that \eqref{eq:1Jan13} holds on $I$ under the assumptions of Theorem \ref{thm2}. From Theorem \ref{thm1} we conclude that 
\[
\lim_{L\to\infty}L\l_L(\xi)=\frac{f_\mu(\xi)}{f_\E(\xi)}.
\]
The claim follows from continuity of the sinc kernel and the fact that \eqref{eq:2Jan13} holds uniformly for $z,w$ in compact subsets of $\bbC$. 
\end{proof}

We finish this section with the proof of Theorem \ref{thm3}. There are several proofs in the orthogonal polynomials case that show how to conclude from universality clock spacing for the zeros of orthogonal polynomials, which only use interlacing properties of the zeros of orthogonal polynomials; cf. \cite{Freud,LevinLubinsky08,SimonCDKernel} The same proof carries over to the setting of continuum Schr\"odinger operators (or even canonical systems) without any change. We supply the proof for the reader's convenience.
\begin{proof}[Proof of Theorem \ref{thm3}]
We start by a well known fact: The function 
$$
m_L(z)=\frac{v(L,z)}{v'(L,z)}
$$
is a Herglotz function and since $v(L,\cdot), v'(L,\cdot)$ are entire, the measure in its integral representation is purely discrete and supported at the  zeros of $v'(L,z)$. Since $m_L$ is increasing between poles, the zeros of $v(L,z)$ and $v'(L,z)$ interlace.

By \eqref{def:CDKernel} and the fact that $v(L,\cdot)$ and $v'(L,\cdot)$ cannot vanish simultaneously, we see that for $z\neq \overline w$, $K_L(z,w)=0$ if and only if $m_L(z)=m_L(\overline w)$ (the value of $m_L(z)$ can also be $\infty$ corresponding to a zero of $v'(L,z)$). Fix $\xi\in I$ and define
\[
\tilde f_L(z)=\frac{K_L\big(\xi,\xi+\frac{z}{Lf_\E(\xi)}\big)}{K_L(\xi,\xi)}.
\]
Let $\dots<\tilde z^{L}_{-1}<\tilde z^{L}_0=0<\tilde z^{L}_1<\dots$ denote the zeros of $\tilde f_L(z)$. By \eqref{eq:universality} we see that 
$
\tilde z^{L}_{\pm 1}\to \pm 1
$ 
and inductively we get 
\[
\tilde z^{L}_{\pm j}\to \pm j.
\]
Let $0\leq z_0^{L}<\tilde z_1^{L}$ be so that $\xi+\frac{ z_0^{L}}{f_\E(x_0)L}$ is the first pole of $m_L$ to the right of $\xi$. Set
\[
f_L(z)=\frac{K_L\big(\xi+\frac{z_0^{L}}{Lf_\E(\xi)},\xi+\frac{z_0^{L}+z}{Lf_\E(\xi)}\big)}{K_L(\xi,\xi)}.
\]
If we denote the zeros of $f_L$ by $\dots<z^{L}_{-1}<z^{L}_0=0< z^{L}_1<\dots$ we see as before that 
\[
z^{L}_{\pm j}\to \pm j
\]
By our definition of $z_0^{L}$, $\xi_j^{L}=\xi+\frac{z_0^{L}+z^L_j}{f_\E(x_0)L}$ are the zeros of $v'(L,z)$, which finishes the proof. Since the convergence in \eqref{eq:universality} is uniform this shows that \eqref{spacing} holds uniformly on I.
\end{proof}
\appendix
\section{de Branges spaces and canonical systems}\label{app1}
It has already been realized in \cite{LubJFA} and also in \cite{ELS,BreuerConstr} that the theory of canonical systems is useful for the understanding of universality limits for Christoffel--Darboux kernels. The inverse theory developed by de Branges is based on a theory of Hilbert spaces of entire functions \cite{deBrangesHilbertSpace}. These spaces are called de Branges spaces. We recall some part of the general theory, to highlight that the objects discussed in Section \ref{sec:deBrangesSchroedinger} are only a special case of this rich theory. We follow the presentation in \cite{RemlingBookCanSys}.

For an entire function $F$ we denote $F^\#(z)=\overline{F(\overline{z})}$. Moreover, let $H^2=H^2(\bbC_+)$ denote the standard Hardy space of the upper half-plane. 
\begin{definition}
	A Hermite--Biehler function is an entire function $E$ with no zeros in $\bbC_+$ satisfying $|E(z)|\geq |E^\#(z)|$ for $z\in\bbC_+$. Given a Hermite--Biehler function $E$ we define the de Branges space $B(E)$ as
	\begin{align*}
	B(E)=\left\{F\mid F\text{ entire}, F/E, F^\#/E\in H^2\right\}.
	\end{align*}
\end{definition}
In the following it will be useful to decompose $E$ into its real and imaginary part. We define $a(z)=\frac{1}{2}(E(z)+E^\#(z))$ and $c(z)=\frac{1}{2i}(E^\#(z)-E(z))$. The notation $a$ and $c$ and the unexpected minus sign in the definition of $c$ will become clear below.

\begin{example}
	The motivating example for de Branges was the Hermite-Biehler function $E_a(z)=e^{-iaz}$ for $a>0$, in which case $B(E_a)$ denotes the standard Paley--Wiener space of entire square-integrable functions of exponential type at most $a$.
\end{example}
\begin{theorem}{\cite[Theorem 4.4]{RemlingBookCanSys}}
	Let $E$ be a Hermite--Biehler function. Then $B(E)$ becomes a Hilbert space when endowed with the scalar product
	\[
	\langle F,G \rangle_{B(E)}=\frac{1}{\pi}\int_{-\infty}^{\infty}F(t)\overline{G(t)}\frac{dt}{|E(t)|^2}.
	\]
	$B(E)$ is a reproducing kernel Hilbert space with reproducing kernel:
	\[
	K_E(z,w)=\frac{\overline{E(w)}E(z)-\overline{E^\#(w)}E^\#(z)}{2i(\overline{w}-z)}=\frac{a(z,x)\overline{c(w,x)}-c(z,x)\overline{a(w,x)}}{\overline{w}-z}.
	\]
\end{theorem}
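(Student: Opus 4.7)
The argument splits naturally into three steps.

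\textbf{Step 1: Hilbert space structure.} The map $F \mapsto F/E$ embeds $B(E)$ into $H^2(\bbC_+)$, and $H^2$-functions have $L^2(\bbR)$ boundary values, so the integral defining $\langle F,G\rangle_{B(E)}$ is simply $\tfrac{1}{\pi}\langle F/E, G/E\rangle_{L^2(\bbR)}$; finiteness, sesquilinearity, and positive definiteness are immediate (positivity being strict because a nonzero entire $F$ cannot vanish on $\bbR$). For completeness I would take a Cauchy sequence $F_n \in B(E)$ and extract $H^2$-limits $g = \lim F_n/E$ and $h = \lim F_n^\#/E$. Since $H^2$-convergence implies locally uniform convergence in $\bbC_+$, the $F_n$ converge locally uniformly on $\bbC_+$ to $Eg$ and, via $F_n(z) = \overline{F_n^\#(\overline z)}$, locally uniformly on $\bbC_-$ to the analogous expression. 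Matching the two through their common $L^2(\bbR)$ boundary values yields an entire limit $F \in B(E)$ with $F/E = g$, $F^\#/E = h$, completing $B(E)$.

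\textbf{Step 2: The candidate kernel lies in $B(E)$.} Let
\[
J(z,w) = \frac{\overline{E(w)} E(z) - \overline{E^\#(w)} E^\#(z)}{2i(\overline w - z)}.
\]
Using $E(\overline w) = \overline{E^\#(w)}$ and $E^\#(\overline w) = \overline{E(w)}$, the numerator vanishes at $z = \overline w$, so the apparent pole is removable and $J(\cdot, w)$ is entire. Factoring
\[
\frac{J(z,w)}{E(z)} = \frac{1}{2i(\overline w - z)}\bigl(\overline{E(w)} - \overline{E^\#(w)}\,\Theta(z)\bigr), \qquad \Theta := E^\#/E,
\]
the Hermite--Biehler inequality gives $|\Theta| \le 1$ on $\bbC_+$, so the bracketed factor is bounded on $\bbC_+$; combined with $1/(\overline w - z) \in H^2(\bbC_+)$ for $w \in \bbC_+$, this yields $J(\cdot, w)/E \in H^2(\bbC_+)$. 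A symmetric computation handles $J^\#(\cdot, w)/E$, so $J(\cdot, w) \in B(E)$.

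\textbf{Step 3: Reproducing property.} For real $t$ we have $\overline{E(t)} = E^\#(t)$, so a short computation gives
\[
\frac{\overline{J(t,w)}}{|E(t)|^2} = \frac{1}{2i(w-t)}\left[\frac{E^\#(w)}{E^\#(t)} - \frac{E(w)}{E(t)}\right],
\]
and hence
\[
\langle F, J(\cdot,w)\rangle_{B(E)} = \frac{E(w)}{2\pi i}\int_\bbR \frac{F(t)/E(t)}{t-w}\,dt - \frac{E^\#(w)}{2\pi i}\int_\bbR \frac{F(t)/E^\#(t)}{t-w}\,dt.
\]
For $w \in \bbC_+$, the first integral is the standard $H^2(\bbC_+)$ reproducing formula applied to $F/E$ and equals $F(w)/E(w)$, so the first term is $F(w)$; the second integral vanishes because $F/E^\# = (F^\#/E)^\#$ on $\bbR$ is the boundary value of an $H^2(\bbC_-)$-function, whose Cauchy integral at $w \in \bbC_+$ is zero. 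This proves $\langle F, J(\cdot,w)\rangle = F(w)$; the case $w \in \bbR$ follows by continuity (letting $\Im w \downarrow 0$) and the case $w \in \bbC_-$ by the symmetry $K_E(z,w) = \overline{K_E(\overline z, \overline w)}$. The alternative form with $a = (E+E^\#)/2$ and $c = (E^\#-E)/(2i)$ is obtained by substituting $E = a - ic$, $E^\# = a + ic$ into the numerator and simplifying.

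The main obstacle I expect is Step~2: the candidate kernel is engineered so that its numerator vanishes exactly on the zero set of its denominator, and this cancellation depends critically on the Hermite--Biehler symmetries $E(\overline{w}) = \overline{E^\#(w)}$; the subsequent $H^2$-membership then hinges on the inequality $|E^\#/E| \le 1$ in the upper half-plane, which is the analytic core of the Hermite--Biehler class.
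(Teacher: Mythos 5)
The paper does not actually prove this theorem --- it is stated as a citation to Remling's book \cite[Theorem 4.4]{RemlingBookCanSys} --- so there is no in-paper proof to compare against. Your outline follows the standard route (map into $H^2(\bbC_+)$, verify the kernel is in the space, then use the $H^2$ Cauchy formula), and Steps~1 and~3 are essentially right as sketched, modulo minor imprecision: "a nonzero entire $F$ cannot vanish on $\bbR$" should read "cannot vanish on a set with an accumulation point," and the gluing of $Eg$ and $\overline{E(\overline z)h(\overline z)}$ across $\bbR$ into a single entire function quietly invokes a Morera/Privalov-type matching result.

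The real gap is in Step~2, where you assert that "a symmetric computation handles $J^\#(\cdot,w)/E$." It is not symmetric. For $w\in\bbC_+$, the identity $J^\#(\cdot,w)=J(\cdot,\overline w)$ (equivalently, direct computation) gives
\[
\frac{J^\#(z,w)}{E(z)}=\frac{E^\#(w)-E(w)\Theta(z)}{2i(w-z)}=\frac{E(w)}{2i}\cdot\frac{\Theta(z)-\Theta(w)}{z-w},
\]
and now the factor $1/(w-z)$ is \emph{not} in $H^2(\bbC_+)$: it has a pole at $z=w\in\bbC_+$, removed only by the coincidental vanishing of the numerator there. To conclude $H^2$ membership you need an extra step, e.g.\ divide out the Blaschke factor $b_w(z)=(z-w)/(z-\overline w)$: since $\Theta$ is $H^\infty$ with $\Theta(w)=E^\#(w)/E(w)$, the quotient $\eta(z)=(\Theta(z)-\Theta(w))/b_w(z)$ is bounded analytic on $\bbC_+$, whence
\[
\frac{\Theta(z)-\Theta(w)}{z-w}=\frac{\eta(z)}{z-\overline w}\in H^2(\bbC_+).
\]
As written, your proof omits this step entirely. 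The case $J(\cdot,w)/E$ that you did treat has no such cancellation because $\overline w$ lies in $\bbC_-$, which is exactly why the two halves are not interchangeable.
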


De Branges spaces arise naturally when discussing canonical systems: 
\begin{definition}
	Consider  matrix-valued functions $A,B: [0,N) \to \Mat(2,\bbC)$ for some $N>0$ or $N=\infty$ which are locally integrable in the sense that their entries are in $L^1([0,x])$ for all $x  < N$, and have the property that $A(x)\geq 0,B(x)^*=B(x)$ and $\tr A(x)j= \tr B(x)j= 0$ for Lebesgue-a.e.\ $x\in [0,N)$, with $j$ defined below. A canonical system is a differential equation of the form 
	\begin{equation*} 
	j\partial_xy(x,z)=(-z A(x)+B(x))y(x,z),\quad j=\begin{pmatrix}
	0&-1\\1&0
	\end{pmatrix}.
	\end{equation*}
\end{definition}

A solution $T:[0,N)\times \bbC\to  \Mat(2,\bbC)$ which satisfies the initial value problem 
\begin{equation*}
j\partial_xT(x,z)= (-z A(x)+B(x))T(x,z),\quad T(0,z)=I_2
\end{equation*}
is called the transfer matrix of the canonical system. It is a fundamental object in the theory of canonical systems. 
Differentiating $T(s,w)^*jT(s,z)$ we see that 
\begin{align}\label{jform}
T(x,w)^*jT(x,z)-j=(\overline{w}-z)\int_0^xT(s,w)^*A(s)T(s,z)ds.
\end{align}
The expression on the left-hand side above is called the $j$ form of $T$ \cite{ArovDym,GolMik97}. For fixed $x$ as a function of $z$ it is entire and satisfies
\begin{align*}
i\left(T(x,z)^*jT(x,z)-j\right)=\begin{cases}
\geq 0,\quad &\text{if } z\in\bbC_+,\\
=0,\quad &\text{if } z\in\bbR.\\
\end{cases}
\end{align*} 
We say that $T$ is $j$-expanding in $\bbC_+$ and $j$-unitary on $\bbR$. Moreover, from \eqref{jform} it also follows that for $z\in \bbC_+$, $T$ satisfies the $j$-monotonicity property
\begin{align}\label{eq:jmonotonicity}
i(T^*(x_2,z)jT(x_2,z)-T^*(x_1,z)jT(x_1,z))\geq 0
\end{align}
for $x_2\geq x_1$. Thus, $\{T(x,z)\}_{x\in[0,N)}$ forms a $j$-monotonic family of entire matrix functions. It can be shown that vice-versa to every such family one can associate a canonical system \cite[Remark 2.3]{DEY21}. The $j$-form is invariant under multiplying $T$ from the left by some $U\in \SL(2,\bbR)$, since such $U$ satisfies $U^*jU=j$. This gives a certain freedom, which is called gauge freedom. A common gauge normalization, which was used by Potapov and de Branges is to assume that for all $x\in[0,N)$, $T(x,0)=I$, which leads on the level of canonical systems to $B=0$. In this case $A$ is usually denoted by $H$ and called the Hamiltonian of the system. We call this the Potapov--de Branges gauge. If $T$ is normalized arbitrarily, then passing to $\tilde T(x,z)=T(x,0)^{-1}T(x,z)$ we obtain a transfer matrix in the Potapov--de Branges gauge. The corresponding Hamiltonian is given by 
\[
H(x)=T(x,0)^*A(x)T(x,0).
\]
In particular, any canonical system obtained from a Schr\"odinger equation as in \eqref{eq:SchroedingerCanonical} can be rewritten into the Potapov--de Branges gauge. We found it more convenient to provide this gauge independent presentation and work directly with \eqref{eq:SchroedingerCanonical}.

Let us write
\[
T(x,z)=\begin{pmatrix}
a(z,x)& b(z,x)\\c(z,x)&d(z,x).
\end{pmatrix}
\]
It follows from \eqref{eq:jmonotonicity} that for fixed $x$
\[
m(z,x)=-\frac{a(z,x)}{c(z,x)}
\]
is a generalized Herglotz function, i.e., the map $z\mapsto m(z,x)$ maps $\bbC_+$ analytically into $\overline{\bbC_+}$, where $\overline{\bbC_+} = \bbC_+ \cup \bbR \cup \{\infty\}$; see \cite[Lemma 4.15]{RemlingBookCanSys}.\footnote{This definition differs slightly from the definition of Herglotz functions given in the introduction, since we allow values in $\overline{\bbC_+}$ rather than in $\bbC_+$. By the maximum principle, an analytic function that attains a value in  $\bbR \cup \{\infty\}$ must already be constant and therefore the set of generalized Herglotz function consists of Herglotz functions and constant functions with values in  $\bbR \cup \{\infty\}$.}

  Using this we see that 
 \begin{align}\label{eq:HB}
 	E(z,x):=a(z,x)-ic(z,x)
 \end{align}
 satisfies 
\[
\left|\frac{E^\#(z,x)}{E(z,x)}\right|=\left|\frac{a(z,x)+ic(z,x)}{a(z,x)-ic(z,x)}\right|=\left|\frac{-\frac{a(z,x)}{c(z,x)}-i}{-\frac{a(z,x)}{c(z,x)}+i}\right|\leq 1
\]
and thus $E(z,x)$ is a Hermite--Biehler function. This also explains in hindsight the minus sign in the definition of $c$. Dividing by $(\overline{w}-z)$ in \eqref{jform} the (1,1)-entry gives exactly the reproducing kernel 
\begin{align}\label{eq:CDFormula}
K_{E_x}(z,w)=\frac{a(z,x)\overline{c(w,x)}-c(z,x)\overline{a(w,x)}}{\overline{w}-z}=\begin{pmatrix}
	1\\0
\end{pmatrix}^*\int_0^xT(s,w)^*A(s)T(s,z)ds\begin{pmatrix}
1\\0
\end{pmatrix},
\end{align}
where $E_x(z)=E(z,x)$.

The canonical system is said to be limit point at $N$ if $\tr\int_0^N T(s,0)^*A(s)T(s,0)ds=\infty$. Due to \eqref{eq:jmonotonicity}, Weyl disks can be introduced in this setting. By a standard abuse of notation, we will use the same notation for an $\SL(2,\bbC)$ matrix and for the M\"obius transformation it generates on the Riemann sphere $\hat\bbC$, with the standard projective identification of $w\in\bbC$ with the coset of $\binom w1$ and $\infty$ with the coset of $\binom 10$. For any $z\in \bbC_+$, the Weyl disks are defined by
\[
D(x,z) = \{ w \in \hat\bbC \mid T(x,z) w \in \overline{\bbC_+} \}.
\] 
Due to \eqref{eq:jmonotonicity}, the Weyl disks are nested, $D(x_2, z) \subset D(x_1, z)$ for $x_1 \le x_2$. Thus, for each $z\in \bbC_+$, the intersection $\cap_{0\leq x<N} D(x,z)$ is a disk or a point. The assumption of being limit point at $N$ exactly means that this intersection is a point.
In this case, the Weyl disks define an analytic map $m: \bbC_+ \to \overline{\bbC_+}$ by
\begin{equation*}
\{ m(z) \} = \bigcap_{0\leq x<N}D(x,z).
\end{equation*}
Note that since $\SL(2,\bbR)$ matrices leave $\overline{\bbC_+}$ invariant, also $D(x,z)$ and thus $m$ do not depend on the gauge normalization.



\section{Martin measure}
Let $\E$ be a semibounded set so that for any Martin function  for the domain $\bbC\setminus\E$ and the point $\infty$ 
\[
\lim\limits_{z\to-\infty}\frac{M(z)}{\sqrt{-z}}>0.
\]
Sets with this property are called Akhiezer-Levin sets. Let again $M_\E$ be normalized so that the limit is equal to 1. Since $M_\E$ vanishes q.e. on $\E$ it can be extended to a subharmonic function on $\bbC$. Let $\rho_\E$ be the associated Riesz measure, defined by
\[
\rho_\E:=\frac{1}{2\pi}\Delta M_\E.
\]
Since $M_\E$ is a harmonic function in $\bbC_+$, we find an analytic function, $\Theta_\E$,  with $\Im \Theta_\E=M_\E$ and since $M_\E$ is positive, $\Theta_\E$ is a Herglotz function. Moreover, it can be shown that also $i\Theta'_\E$ is a Herglotz function and the measure in its integral representation is exactly $\rho_\E$:

\begin{lemma}{\cite[Lemma 2.3]{EichLuk}}
	The measure $\rho_\E$ is such that 
	\[
	\int_{\bbR}\frac{d\rho_\E(t)}{1+|t|}<\infty.
	\]
	Moreover, $i\Theta_\E'$ is a Herglotz function and we have 
	\begin{align*}
	i\Theta_\E'(z)=\int_{\bbR}\frac{d\rho_\E(t)}{t-z}.
	\end{align*}
\end{lemma}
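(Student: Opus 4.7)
The lemma asserts three things: the moment bound $\int d\rho_\E(t)/(1+|t|) < \infty$, that $i\Theta_\E'$ is Herglotz, and the explicit Cauchy integral representation. I would organize the proof into three steps.

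First, for the moment bound, I would apply the Riesz--Jensen identity for the subharmonic function $M_\E$:
$$\frac{1}{2\pi}\int_0^{2\pi} M_\E(re^{i\theta})\,d\theta - M_\E(0) = \int_0^r \frac{\rho_\E(\overline{B_s(0)})}{s}\,ds,\qquad r > 0.$$
The asymptotic expansion of $M_\E$ recalled in the introduction implies $M_\E(z) = O(\sqrt{|z|})$ as $|z| \to \infty$, so the left-hand side is $O(\sqrt r)$. Monotonicity of $s \mapsto \rho_\E(\overline{B_s(0)})$ then forces $\rho_\E(\overline{B_s(0)}) = O(\sqrt s)$, and a dyadic splitting yields the claimed moment bound. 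With this in hand, the Cauchy transform
$$h(z) := \int_\bbR \frac{d\rho_\E(t)}{t-z}$$
is well-defined and analytic on $\bbC \setminus \E$, and its imaginary part $y\int d\rho_\E(t)/|t-z|^2$ is strictly positive on $\bbC_+$, so $h\vert_{\bbC_+}$ is Herglotz.

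Next I would identify $h$ with $i\Theta_\E'$ on $\bbC_+$ by matching imaginary parts. Writing $\Theta_\E = u + iM_\E$, the Cauchy--Riemann equations give $\Im(i\Theta_\E') = \partial_y M_\E$. Using the Riesz representation of $M_\E$ in the renormalized form already recorded in Section~4,
$$M_\E(z) = M_\E(z_*) + \int_\bbR \log\left|1 - \frac{z-z_*}{t-z_*}\right| d\rho_\E(t),$$
with $z_* < \inf\E$, differentiation in $y$ under the integral (justified by Step~1) yields $\partial_y M_\E(z) = \Im h(z)$ on $\bbC_+$. Hence $h - i\Theta_\E'$ is real-analytic on $\bbC_+$ and therefore constant, and the constant vanishes by letting $z \to -\infty$: for $i\Theta_\E'$ via term-by-term differentiation of $M_\E(z) = \Re(\sqrt{-z} + a_\E/(2\sqrt{-z})) + o(1/\sqrt{|z|})$, and for $h$ by dominated convergence using the moment bound.

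The main obstacle is establishing the renormalized Riesz representation globally, without a spurious harmonic correction: one must verify that the difference between $M_\E$ and the renormalized logarithmic potential is a harmonic function on $\bbC$ of order at most $1/2$, and hence constant by a Liouville-type argument; that constant is then absorbed into $M_\E(z_*)$. Once this identification is secured, the remaining steps are straightforward bookkeeping.
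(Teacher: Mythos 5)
The paper cites this result from \cite[Lemma 2.3]{EichLuk} and supplies no proof of its own, so there is no in-paper argument to compare against; your blind reconstruction must be judged on its own terms, and it is essentially correct. The Jensen--Riesz formula together with $M_\E(z)=O(\sqrt{|z|})$ does force $\rho_\E(\overline{B_s(0)})=O(\sqrt{s})$ (use monotonicity on a dyadic annulus), and a dyadic splitting then yields the moment condition. The Cauchy transform $h(z)=\int d\rho_\E(t)/(t-z)$ is then a well-defined Herglotz function, and matching $\Im h$ with $\Im(i\Theta_\E')=\partial_y M_\E$ via the Cauchy--Riemann equations and the renormalized Riesz representation is the right identification; a holomorphic function whose imaginary part vanishes on $\bbC_+$ is a real constant. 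You also correctly single out the genuinely nontrivial step, namely verifying that $M_\E$ agrees globally, up to an additive constant, with the renormalized logarithmic potential, which reduces to a Liouville argument for a harmonic function of sublinear growth.

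Two technical points deserve more care than the sketch gives them. First, to conclude $i\Theta_\E'(\xi)\to 0$ as $\xi\to-\infty$ you cannot simply differentiate the asymptotic expansion of $M_\E$ term by term; a cleaner route is to note that $M_\E(\overline z)=M_\E(z)$, so $\partial_y M_\E=0$ on $(-\infty,\min\E)$ and $i\Theta_\E'(\xi)=-\partial_\xi M_\E(\xi)$ there, and then apply an interior gradient estimate to the harmonic function $M_\E(z)-\Im\sqrt{-z}$, which is bounded near $\infty$ and hence has derivative $O(1/|z|)$. Second, in the Liouville step the logarithmic potential is only subharmonic, so a priori you obtain only a one-sided estimate on the harmonic difference; a Borel--Carath\'eodory--type inequality is needed to upgrade $h\le o(|z|)$ to $|h|=o(|z|)$ before concluding $h$ is constant. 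Neither issue is fatal, but both should be made explicit in a complete write-up.
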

Recall that $d\rho_\E(t)=f_\E dt+d\rho_{E,s}(t)$ denotes the Lebesgue decomposition of $\rho_\E$.
\begin{lemma}\label{lem:AppendixMartinMeasure}
	Let $(a,b)=I\subset \E^\circ$. Then  $\rho_\E(\xi)|_I$ is absolutely continuous and $f_\E(\xi)$ is real analytic. Moreover,  if $I\subset \E_1\subset \E_2$ we have
	\begin{align}\label{eq:monotonicityMartinMeasure}
	f_{\E_2}(\xi)|_I\leq f_{\E_1}(\xi)|_I.
	\end{align}
\end{lemma}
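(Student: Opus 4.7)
The plan is to derive both statements from a Schwarz reflection argument for $\Theta_\E$ across $I$, combined with a minimum-principle comparison between the Martin functions of the two sets.

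First I would observe that every point $\xi\in I\subset\E^\circ$ is a regular boundary point of $\bbC\setminus\E$ (interior points satisfy the Wiener criterion trivially), and combined with upper semicontinuity of the subharmonic extension this gives $M_\E(\xi)=0$ throughout $I$. Shrinking to a compact subinterval if necessary, I would pick an open neighborhood $U\subset\bbC$ of $I$ with $U\cap\bbR\subset\E$, so that $M_\E$ is harmonic on $U\setminus\bbR$ and converges to $0$ continuously on $U\cap\bbR$. Schwarz reflection then extends $\Theta_\E$ analytically across $U\cap\bbR$ by $\Theta_\E(\bar z)=\overline{\Theta_\E(z)}$, and hence $i\Theta_\E'$ is analytic there as well. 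Since the lemma quoted in the appendix identifies $i\Theta_\E'(z)=\int\frac{d\rho_\E(t)}{t-z}$ as the Cauchy transform of $\rho_\E$, analyticity of this Cauchy transform across $I$ forces $\rho_\E\lvert_I$ to be purely absolutely continuous. Stieltjes inversion combined with Cauchy--Riemann yields
\[
f_\E(\xi)=\frac{1}{\pi}\Im(i\Theta_\E'(\xi))=\frac{1}{\pi}\Re\Theta_\E'(\xi)=\frac{1}{\pi}\partial_y M_\E(\xi,0^+),
\]
which is real analytic on $I$ because it is the real part of a function analytic on $U$.

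For the monotonicity, given Akhiezer--Levin sets $\E_1\subset\E_2$ I would work with $u:=M_{\E_1}-M_{\E_2}$ on the smaller domain $\Omega_2:=\bbC\setminus\E_2\subset\bbC\setminus\E_1$. The function $u$ is harmonic on $\Omega_2$; on $\partial\Omega_2=\E_2$ we have $M_{\E_2}=0$ q.e.\ and $M_{\E_1}\geq 0$; and since both Martin functions are normalized by $\lim_{z\to-\infty}M_{\E_j}(z)/\sqrt{-z}=1$, one has $u(z)=o(\sqrt{|z|})$ at infinity. A minimum principle tailored to Akhiezer--Levin domains then gives $u\geq 0$ on $\Omega_2$, i.e.\ $M_{\E_1}\geq M_{\E_2}$ there. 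For $\xi\in I\subset\E_1\subset\E_2$ both $M_{\E_j}$ vanish at $\xi$, so dividing the inequality $M_{\E_1}(\xi+iy)\geq M_{\E_2}(\xi+iy)$ by $y>0$ and letting $y\to 0^+$ yields $\partial_y M_{\E_1}(\xi,0^+)\geq\partial_y M_{\E_2}(\xi,0^+)$, which by the formula above is exactly $f_{\E_1}(\xi)\geq f_{\E_2}(\xi)$.

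The hardest part will be justifying the minimum principle on the unbounded domain $\Omega_2$ with the correct control at infinity. The most delicate point is that both $M_{\E_1}$ and $M_{\E_2}$ individually grow like $\sqrt{-z}$, so neither a bounded-domain maximum principle nor a Phragm\'en--Lindel\"of statement for bounded harmonic functions applies directly. I would handle this by a truncation argument: apply the standard maximum principle to $u+\e M_{\E_2}$ on $\Omega_2\cap\{|z|<R\}$ (using the boundary behavior $u\geq 0$ q.e.\ on $\E_2$ and bounding the contribution from the circular arc via $u(z)=o(\sqrt{|z|})$ against $\e M_{\E_2}(z)\sim\e\sqrt{|z|}$), then send $R\to\infty$ and $\e\to 0^+$.
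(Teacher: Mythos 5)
Your argument is correct and follows essentially the same route as the paper: Dirichlet regularity of interior points gives vanishing of $M_\E$ on $I$, analytic continuation of $\Theta_\E$ (Schwarz reflection is exactly the mechanism) gives an analytic Cauchy transform, hence an absolutely continuous Martin measure with real-analytic density $f_\E=\tfrac1\pi\Re\Theta_\E'$, and the monotonicity is read off from $M_{\E_2}\le M_{\E_1}$ via the normal-derivative formula. The only substantive difference is at the last step: the paper simply cites its own Lemma 2.7 from the Stahl--Totik paper for $M_{\E_2}\le M_{\E_1}$, whereas you supply a self-contained minimum-principle argument with the $\e M_{\E_2}$ majorant on truncated domains. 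That argument is sound (indeed the normalized Martin functions differ by $o(1)$ at $\infty$ along $\arg z\in[\delta,2\pi-\delta]$, and $\Omega_2$ avoids the positive real axis near infinity, so the arc contribution is dominated by $\e M_{\E_2}\sim\e\sqrt{R}$; the q.e.\ boundary inequality is handled by the extended minimum principle for superharmonic functions bounded below), and it buys self-containedness at the cost of a page of Phragm\'en--Lindel\"of-type bookkeeping that the paper outsources.
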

\begin{proof}
	Since $I$ only contains Dirichlet regular points by \cite[Theorem 4.2.2]{RansfordPotTheorie}, for every $\xi\in I$, $\lim\limits_{z \to \xi}M_\E(z)=0$, by \cite[Theorem 2.1]{EichLuk}. Since $\Theta_\E$ is a Herglotz function, this implies that it can be analytically extended through $I$. Hence, also $\Theta_\E'(z)$ has an analytic extension through $I$. Since $i\Theta_\E'$ is  a Herglotz function, this shows that the measure in its integral representation is purely absolutely continuous and moreover, 
	\[
	\frac{1}{\pi}\Re\Theta_\E'(\xi)=f_\E(\xi).
	\]
	Since $M_\E|_I=0$, $\Im\Theta_\E'(\xi)=0$ and we conclude that $f_\E$ is real analytic on $I$. 
	
	If $\E_1\leq \E_2$ we have that $M_{\E_2}\leq M_{\E_1}$ as follows e.g. from \cite[Lemma 2.7]{EichLuk}. Since by the Cauchy Riemann equations we get
	\[
	\pi f_\E(\xi)=\lim\limits_{y\to 0}\frac{M_\E(\xi+iy)}{y}
	\]
	and we conclude \eqref{eq:monotonicityMartinMeasure}.
\end{proof}

\providecommand{\MR}[1]{}
\providecommand{\bysame}{\leavevmode\hbox to3em{\hrulefill}\thinspace}
\providecommand{\MR}{\relax\ifhmode\unskip\space\fi MR }
\providecommand{\MRhref}[2]{%
	\href{http://www.ams.org/mathscinet-getitem?mr=#1}{#2}
}
\providecommand{\href}[2]{#2}


\begin{thebibliography}{10}
	
	\bibitem{AkhLev60}
	N.~I. Ahiezer and B.~Ja. Levin, \emph{Generalization of {S}. {N}. {B}ernstein's
		inequality for derivatives of entire functions}, Issledovanija po sovremennym
	problemam teorii funkcii kompleksnogo peremennogo, Gosudarstv. Izdat.
	Fiz.-Mat. Lit., Moscow, 1960, pp.~111--165. \MR{0125968}
	
	\bibitem{Classpotential}
	D.~H. Armitage and S.~J. Gardiner, \emph{Classical potential theory}, Springer
	Monographs in Mathematics, Springer Verlag, London, 2001. \MR{1801253}
	
	\bibitem{ArovDym}
	D.~Z. Arov and H.~Dym, \emph{{$J$}-contractive matrix valued functions and
		related topics}, Encyclopedia of Mathematics and its Applications, vol. 116,
	Cambridge University Press, Cambridge, 2008. \MR{2474532}
	
	\bibitem{AvilaLastSimon}
	A.~Avila, Y.~Last, and B.~Simon, \emph{Bulk universality and clock spacing of
		zeros for ergodic {J}acobi matrices with absolutely continuous spectrum},
	Anal. PDE \textbf{3} (2010), no.~1, 81--108. \MR{2663412}
	
	\bibitem{Azarin09}
	V.~Azarin, \emph{Growth theory of subharmonic functions}, Birkh\"auser Advanced
	Texts: Basler Lehrb\"ucher, Birkh\"auser Basel, 2009. \MR{2463743}
	
	\bibitem{BessonovMNT}
	R.~Bessonov, \emph{{Entropy function and orthogonal polynomials}},
	arXiv:2104.11196 (2021).
	
	\bibitem{BDGL}
	I.~Binder, D.~Damanik, M.~Goldstein, and M.~Lukic, \emph{Almost periodicity in
		time of solutions of the {K}d{V} equation}, Duke Math. J. \textbf{167}
	(2018), no.~14, 2633--2678. \MR{3859361}
	
	\bibitem{BreuerConstr}
	J.~Breuer, \emph{Scaling limits of {J}acobi matrices and the
		{C}hristoffel-{D}arboux kernel}, Constr. Approx. \textbf{53} (2021), no.~2,
	349--379. \MR{4228893}
	
	\bibitem{BreuerWeissman}
	J.~Breuer and D.~Weissman, \emph{Level repulsion for {S}chr\"{o}dinger
		operators with singular continuous spectrum}, J. Spectr. Theory \textbf{9}
	(2019), no.~2, 429--451. \MR{3950658}
	
	\bibitem{DEY21}
	D.~Damanik, B.~Eichinger, and P.~Yuditskii, \emph{Szego's theorem for canonical
		systems: the {A}rov gauge and a sum rule}, J. Spectr. Theory \textbf{11}
	(2021), no.~3, 1255--1277. \MR{4322036}
	
	\bibitem{deBrangesHilbertSpace}
	L.~de~Branges, \emph{Hilbert spaces of entire functions}, Prentice-Hall, Inc.,
	Englewood Cliffs, N.J., 1968. \MR{0229011}
	
	\bibitem{EichLuk}
	B.~Eichinger and M.~Luki\'c, \emph{{Stahl--Totik regularity for continuum
			Schr\"odinger operators}}, arXiv:2001.00875 (2020).
	
	\bibitem{ELS}
	B.~Eichinger, M.~Luki\'c, and B.~Simanek, \emph{An approach to universality
		using {W}eyl $m$-functions}, arXiv:2108.01629 (2021).
	
	\bibitem{EVY}
	B.~Eichinger, T.~VandenBoom, and P.~Yuditskii, \emph{Kd{V} hierarchy via
		abelian coverings and operator identities}, Trans. Amer. Math. Soc. Ser. B
	\textbf{6} (2019), 1--44. \MR{3894927}
	
	\bibitem{ErY12}
	A.~Eremenko and P.~Yuditskii, \emph{Comb functions}, Recent advances in
	orthogonal polynomials, special functions, and their applications, Contemp.
	Math., vol. 578, Amer. Math. Soc., Providence, RI, 2012, pp.~99--118.
	\MR{2964141}
	
	\bibitem{Freud}
	G.~Freud, \emph{Orthogonal polynomials}, M. B. Porter Lectures, New York, 1971.
	
	\bibitem{GardSj09}
	S.~J. Gardiner and T.~Sj\"{o}din, \emph{Potential theory in {D}enjoy domains},
	Analysis and mathematical physics, Trends Math., Birkh\"{a}user, Basel, 2009,
	pp.~143--166. \MR{2724611}
	
	\bibitem{GeHoSoliton}
	F.~Gesztesy and H.~Holden, \emph{Soliton equations and their algebro-geometric
		solutions. {V}ol. {I}}, Cambridge Studies in Advanced Mathematics, vol.~79,
	Cambridge University Press, Cambridge, 2003, $(1+1)$-dimensional continuous
	models. \MR{1992536}
	
	\bibitem{GesYud}
	F.~Gesztesy and P.~Yuditskii, \emph{Spectral properties of a class of
		reflectionless {S}chr\"{o}dinger operators}, J. Funct. Anal. \textbf{241}
	(2006), no.~2, 486--527. \MR{2271928}
	
	\bibitem{GilbertPearson}
	D.~J. Gilbert and D.~B. Pearson, \emph{On subordinacy and analysis of the
		spectrum of one-dimensional {S}chr\"{o}dinger operators}, J. Math. Anal.
	Appl. \textbf{128} (1987), no.~1, 30--56. \MR{915965}
	
	\bibitem{GolMik97}
	L.~Golinskii and I.~Mikhailova, \emph{Hilbert spaces of entire functions as a
		{$J$} theory subject [{P}reprint {N}o. 28-80, {I}nst. {L}ow {T}emp. {P}hys.
		{E}ngrg., {K}harkov, 1980]}, Topics in interpolation theory ({L}eipzig,
	1994), Oper. Theory Adv. Appl., vol.~95, Birkh\"{a}user, Basel, 1997, Edited
	by V. P. Potapov, Translated from the Russian, pp.~205--251. \MR{1473258}
	
	\bibitem{GubkinMNT}
	P.~Gubkin, \emph{Mate-{N}evai-{T}otik theorem for {K}rein systems}, Integral
	Equations Operator Theory \textbf{93} (2021), no.~3, Paper No. 33, 24.
	\MR{4270805}
	
	\bibitem{JonesMarshall}
	P.~W. Jones and D.~E. Marshall, \emph{Critical points of {G}reen's function,
		harmonic measure, and the corona problem}, Ark. Mat. \textbf{23} (1985),
	no.~2, 281--314. \MR{827347}
	
	\bibitem{LastSimonInvent99}
	Y.~Last and B.~Simon, \emph{Eigenfunctions, transfer matrices, and absolutely
		continuous spectrum of one-dimensional {S}chr\"{o}dinger operators}, Invent.
	Math. \textbf{135} (1999), no.~2, 329--367. \MR{1666767}
	
	\bibitem{LevinLubinsky08}
	E.~Levin and D.~S. Lubinsky, \emph{Applications of universality limits to zeros
		and reproducing kernels of orthogonal polynomials}, J. Approx. Theory
	\textbf{150} (2008), no.~1, 69--95. \MR{2381529}
	
	\bibitem{LubinskyAnnals}
	D.~S. Lubinsky, \emph{A new approach to universality limits involving
		orthogonal polynomials}, Ann. of Math. (2) \textbf{170} (2009), no.~2,
	915--939. \MR{2552113}
	
	\bibitem{LubJFA}
	\bysame, \emph{Universality limits for random matrices and de {B}ranges spaces
		of entire functions}, J. Funct. Anal. \textbf{256} (2009), no.~11,
	3688--3729. \MR{2514057}
	
	\bibitem{MaltsevCMP}
	A.~Maltsev, \emph{Universality limits of a reproducing kernel for a half-line
		{S}chr\"{o}dinger operator and clock behavior of eigenvalues}, Comm. Math.
	Phys. \textbf{298} (2010), no.~2, 461--484. \MR{2669445}
	
	\bibitem{MarchenkoSturmLiouville}
	V.~A. Marchenko, \emph{Sturm-{L}iouville operators and applications}, revised
	ed., AMS Chelsea Publishing, Providence, RI, 2011. \MR{2798059}
	
	\bibitem{MarOst75}
	V.~A. Mar\v{c}enko and I.~V. Ostrovskii, \emph{A characterization of the
		spectrum of the {H}ill operator}, Mat. Sb. (N.S.) \textbf{97(139)} (1975),
	no.~4(8), 540--606, 633--634. \MR{0409965}
	
	\bibitem{MNT}
	A.~M{\'{a}}{t\'{e}}, P.~Nevai, and V.~Totik, \emph{Szeg{\H{o}}'s extremum
		problem on the unit circle}, Ann. of Math. (2) \textbf{134} (1991), no.~2,
	433--453. \MR{1127481}
	
	\bibitem{Mitkovski}
	M.~Mitkovski, \emph{Universality limits for entire functions}, Proc. Amer.
	Math. Soc. \textbf{141} (2013), no.~9, 3119--3124. \MR{3068965}
	
	\bibitem{PoschelTrubowitz}
	J.~P\"{o}schel and E.~Trubowitz, \emph{Inverse spectral theory}, Pure and
	Applied Mathematics, vol. 130, Academic Press, Inc., Boston, MA, 1987.
	\MR{894477}
	
	\bibitem{RansfordPotTheorie}
	T.~Ransford, \emph{Potential theory in the complex plane}, London Mathematical
	Society Student Texts, vol.~28, Cambridge University Press, Cambridge, 1995.
	\MR{1334766}
	
	\bibitem{RemlingJFA02}
	C.~Remling, \emph{Schr\"{o}dinger operators and de {B}ranges spaces}, J. Funct.
	Anal. \textbf{196} (2002), no.~2, 323--394. \MR{1943095}
	
	\bibitem{RemlingAfunction}
	\bysame, \emph{Inverse spectral theory for one-dimensional {S}chr\"{o}dinger
		operators: the {$A$} function}, Math. Z. \textbf{245} (2003), no.~3,
	597--617. \MR{2021573}
	
	\bibitem{RemlingBookCanSys}
	\bysame, \emph{Spectral theory of canonical systems}, De Gruyter Studies in
	Mathematics, vol.~70, De Gruyter, Berlin, 2018. \MR{3890099}
	
	\bibitem{RudinRealandComplex}
	W.~Rudin, \emph{Real and complex analysis}, third ed., McGraw-Hill Book Co.,
	New York, 1987. \MR{924157}
	
	\bibitem{SimonCDKernel}
	B.~Simon, \emph{The {C}hristoffel-{D}arboux kernel}, Perspectives in partial
	differential equations, harmonic analysis and applications, Proc. Sympos.
	Pure Math., vol.~79, Amer. Math. Soc., Providence, RI, 2008, pp.~295--335.
	\MR{2500498}
	
	\bibitem{SimonTwoExt08}
	\bysame, \emph{Two extensions of {L}ubinsky's universality theorem}, J. Anal.
	Math. \textbf{105} (2008), 345--362. \MR{2438429}
	
	\bibitem{SodinYudSturmLiouville}
	M.~Sodin and P.~Yuditskii, \emph{Almost periodic {S}turm-{L}iouville operators
		with {C}antor homogeneous spectrum}, Comment. Math. Helv. \textbf{70} (1995),
	no.~4, 639--658. \MR{1360607}
	
	\bibitem{SodYud97}
	\bysame, \emph{Almost periodic {J}acobi matrices with homogeneous spectrum,
		infinite-dimensional {J}acobi inversion, and {H}ardy spaces of
		character-automorphic functions}, J. Geom. Anal. \textbf{7} (1997), no.~3,
	387--435. \MR{1674798}
	
	\bibitem{Totik2000AssympChris}
	V.~Totik, \emph{Asymptotics for {C}hristoffel functions for general measures on
		the real line}, J. Anal. Math. \textbf{81} (2000), 283--303. \MR{1785285}
	
	\bibitem{TotikUniversality09}
	\bysame, \emph{Universality and fine zero spacing on general sets}, Ark. Mat.
	\textbf{47} (2009), no.~2, 361--391. \MR{2529707}
	
\end{thebibliography}
\end{document}